\renewcommand{\subset}{\subseteq}
\renewcommand{\emptyset}{\varnothing}
\def\bes{\begin{equation*} }
\def\ees{\end{equation*} }
\def\w{w}
\newsavebox\myboxA
\newsavebox\myboxB
\newlength\mylenA
\newcommand*\xoverline[2][0.75]{%
    \sbox{\myboxA}{$\m@th#2$}%
    \setbox\myboxB\null
    \ht\myboxB=\ht\myboxA%
    \dp\myboxB=\dp\myboxA%
    \wd\myboxB=#1\wd\myboxA
    \sbox\myboxB{$\m@th\overline{\copy\myboxB}$}
    \setlength\mylenA{\the\wd\myboxA}
    \addtolength\mylenA{-\the\wd\myboxB}%
    \ifdim\wd\myboxB<\wd\myboxA%
       \rlap{\hskip 0.5\mylenA\usebox\myboxB}{\usebox\myboxA}%
    \else
        \hskip -0.5\mylenA\rlap{\usebox\myboxA}{\hskip 0.5\mylenA\usebox\myboxB}%
    \fi}
\newtheorem{theorem}            {Theorem}[section]
\newtheorem{thm}           [theorem]{Theorem}
\newtheorem{cor}          [theorem]{Corollary}
\newtheorem{prop}        [theorem]{Proposition}
\newtheorem{lem}              [theorem]{Lemma}
\newtheorem{lemma}              [theorem]{Lemma}
\newtheorem{example}            [theorem]{Example}
\newtheorem{ex}            [theorem]{Example}
\newtheorem{rem}             [theorem]{Remark}
\newtheorem{remark}             [theorem]{Remark}
\newtheorem{algor}         [theorem]{Algorithm}
\numberwithin{equation}{section}
\def\beq{ \begin{equation} }
\def\eeq{ \end{equation} }
\def\bep{\begin{proof}}
\def\eep{\end{proof}}
\def\ben{\begin{enumerate}}
\def\een{\end{enumerate}}
\def\bet{\begin{theorem}}
\def\eet{\end{theorem}}
\def\bel{\begin{lemma}}
\def\eel{\end{lemma}}
\def\la{\langle}
\def\chrisT{\ast}
\def\FNSx{\R \langle x,x^{\chrisT} \rangle}
\def\FA{\R \langle x,x^{\chrisT} \rangle}
\def\x{x}
\def\xs{x^*}
\newcommand{\ax}{\langle\x\rangle}
\newcommand{\axs}{\langle\x,\xs\rangle}
\newcommand{\axstar}{\langle\xs\rangle}
\def\lnss{left Nullstellensatz property}
\def\qrr{real radical}
\def\rad{vanishing radical}
\newcommand{\rr}[1]{\sqrt[\mathrm{re}]{#1}}
\newcommand{\sq}[1]{\sqrt{#1}}
\def\J{\mathcal{I}}
\def\tr{\mbox{tr}}
\def\x{x}
\def\xs{x^*}
\def\cI{ {\mathcal I} }
\newcommand{\N}{{\mathbb N}}
\newcommand{\RR}{{\mathbb R}}
\newcommand{\R}{{\mathbb R}}
\def\la{\lambda}
\newcommand{\df}[1]{{\bf{#1}}{\index{#1}}}
\begin{document}

\title{On real one-sided ideals in a free algebra}

\author[Cimpri\v c]{Jakob Cimpri\v c${}^1$}
\address{Jakob Cimpri\v c, Department of Mathematics, University of 
Ljubljana, Slovenia}
\email{cimpric@fmf.uni-lj.si}
\thanks{${}^1$Research supported by the grant P1--0222 from the 
Slovenian Research Agency, ARRS}

\author[Helton]{J. William Helton${}^2$}
\address{J. William Helton, Department of Mathematics\\
  University of California \\
  San Diego}
\email{helton@math.ucsd.edu}
\thanks{${}^2$Research supported by NSF grants
DMS-0700758, DMS-0757212, and the Ford Motor Co.}
\author[Klep]{Igor Klep${}^{3}$}
\address{Igor Klep, Department of Mathematics, 
The University of Auckland, New Zealand}
\email{igor.klep@auckland.ac.nz}
\thanks{${}^3$Supported by the Faculty Research Development Fund (FRDF) of The
University of Auckland (project no. 3701119). Partially supported by the Slovenian Research Agency grant P1-0222.}
\author[McCullough]{Scott McCullough${}^4$}
\address{Scott McCullough, Department of Mathematics\\
  University of Florida, Gainesville 
   }
   \email{sam@math.ufl.edu}
\thanks{${}^4$Research supported by NSF grants DMS-0758306 and DMS-1101137.}

\author[Nelson]{Christopher Nelson${}^5$}
\address{Christopher Nelson, Department of Mathematics\\
  University of California \\
  San Diego}
\email{csnelson@math.ucsd.edu}
\thanks{${}^5$Research supported by NSF grants
DMS-0700758, DMS-0757212}

\subjclass[2010]{Primary 14P10, 08B20; Secondary 90C22, 16W10, 13J30}
\date{\today}
\keywords{real algebraic geometry, Nullstellensatz, real ideal, noncommutative polynomial}

\setcounter{tocdepth}{2}
\contentsmargin{2.55em} 
\dottedcontents{section}[3.8em]{}{2.3em}{.4pc} 
\dottedcontents{subsection}[6.1em]{}{3.2em}{.4pc}

\makeatletter
\newcommand{\mycontentsbox}{%
{\centerline{NOT FOR PUBLICATION}
\small\tableofcontents}}
\def\enddoc@text{\ifx\@empty\@translators \else\@settranslators\fi
\ifx\@empty\addresses \else\@setaddresses\fi
\newpage\mycontentsbox}
\makeatother


\begin{abstract}  
In real algebraic geometry there are several notions
of the radical of an ideal $I$.
There is the \rad{} $\sq{I}$ defined as the set of all real polynomials vanishing on the real zero set of $I$,
and the real radical
$\sqrt[\rm re]{I}$ defined 
as the smallest real ideal
containing $I$. 
(Neither of them is to be confused with the usual radical from commutative algebra.)
By the real Nullstellensatz, $\sq{I}=\rr{I}$.
This paper 
focuses on
extensions of these 
to  the free algebra  $\RR \langle x,x^\ast \rangle$
 of noncommutative real polynomials  in  $x=(x_1,\ldots,x_g)$ and 
 $x^*=(x_1^*,\ldots,x_g^*)$.

 We work with a natural notion of the (noncommutative real)
 zero set  $V(I)$ of a   left ideal $I$ in 
   $\RR \langle x,x^\ast \rangle$. 
 The \rad{} $\sqrt[]{I}$ of $I$ is   the set of
all $p \in \RR \langle x,x^\ast \rangle$  which vanish on
$V(I)$. 
The earlier paper \cite{CHMN} gives an appropriate
 notion of $\sqrt[\rm re]{I}$ and proves $\sq{I}=\sqrt[\rm re]{I}$ when $I$ is a
 finitely generated left ideal, a free $*-$Nullstellensatz.
However, this does not tell us for a particular ideal $I$ whether or  not 
 $I =\sqrt[\rm re]{I}$, 
 and that is the topic of this paper.
We give a complete solution for 
monomial ideals and  homogeneous principal ideals.
We also present the case of principal univariate ideals
with a degree two generator and find that it is very messy.
We  discuss an algorithm to determine if $I=\sqrt[\rm re]{I}$ 
 (implemented  under \href{http://math.ucsd.edu/~ncalg}{\tt NCAlgebra})
 with finite run times and provable effectiveness.
\end{abstract}

\maketitle

\section{Introduction}
 
 The introduction begins with definitions and a little motivation for them.
 Then it sketches the main results of this paper together with links to where
 they are found.

\subsection{Zero Sets in Free Algebras}
 \label{subsec:zeros}
Let $\axs$ be the monoid freely
generated by $\x=(x_1,\ldots, x_g)$ and 
$\xs=(x_1^*,\ldots,x_g^*)$, i.e., $\axs$ consists of  \df{words} in the $2g$
noncommuting
letters $x_{1},\ldots,x_{g},x_1^*,\ldots,x_g^*$
(including the empty word $\emptyset$ which plays the role of the identity $1$).
Let $\FA$ denote the 
$\R$-algebra freely generated by $\x,\xs$, i.e., the elements of $\FA$
are \df{polynomials} in the noncommuting variables $\x,\xs$ with coefficients
in $\R$.  Equivalently, $\FA$ is the \df{free $\ast$-algebra} on $\x$.
The
length of the longest word in a noncommutative polynomial $f\in \FA$ is the
\df{degree} of $f$ and is denoted by $\deg( f)$. The set
of all words of degree at most $k$ is $\axs_k$, and $\FA_k$ is the vector
space of all noncommutative polynomials of degree at most $k$.

 Given a $g$-tuple $X=(X_1,\dots,X_g)$ of same size 
 square matrices over $\R$, write
 $p(X)$ for the natural evaluation of $p$ at
 $X$.  For  $S \subset \FA$ we introduce
 \[
 V(S)^{(n)} = \left\{ (X,v) \in (\R^{n\times n})^g\times \R^n \mid p(X)v=0 \text{ for every }   p \in S\right\},
 \]
 and define the \df{zero set} of $S$ to be
\[
V(S)= \bigcup_{n\in \N} V(S)^{(n)} = \{ (X,v) \mid  p(X)v=0 \text{ for every }   p \in S \} .
\]
To each subset $T$ of $\bigcup_{n\in \N} \big((\R^{n\times n})^g\times \R^n\big)$ we 
associate the \df{left ideal}
\[
\J(T) = \{p \in \FA \mid  p(X)v=0 \text{ for every } (X,v) \in T \}.
\]
  For a left ideal $I$ of $\FA$, we call
\[
\sq{I} := \J\big(V(I)\big) 
\]
the \df{\rad}{} of $I$.\footnote{In \cite{CHMN} this radical was denoted $\sqrt[\Pi]{\color{white}a}.$  Since in this article only radicals
with respect to finite dimensional representations are considered, the $\Pi$ has been dropped.}
 Evidently $\sq{I}$ is a left ideal. 
We say that $I$ is 
\df{has the \lnss}
if $\sq{I}=I$.
Now we describe a class of ideals which has this property.

 A polynomial $p $ is  \df{analytic}
if it  has no transpose variables, 
that is,  no $x_i^*$.
For example,
 $p(x) = 1 + x_1x_2 + x_1^3 + x_2^5$ is analytic while
 $p(x) = 1 + x_1^*x_2 + (x_1^*)^3 + x_2^5$ is not analytic.
There is a strong Nullstellensatz for left ideals
generated by analytic polynomials in \cite{HMP}.
This strengthens an earlier result proved by Bergman \cite{HM};
see also \cite{BK} for a survey on noncommutative Nullstellens\"atze.

\begin{theorem}  
\label{thm:statz}
Let $p_1, \ldots, p_m $ be analytic polynomials, and
let $I $ be the left ideal generated by those $p_i$.
Then $\sq{I}=I$, i.e.,
$$
\big( \forall j \  p_j(X)v=0  \big) \implies  \ q(X) v =0 \ \  \qquad  
\text{iff}   \qquad \ \ 
 q = { f_1} p_1 + \cdots + { f_m } p_m .$$
\end{theorem}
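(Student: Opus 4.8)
The plan is to reduce the statement to a known strong Nullstellensatz for a single analytic polynomial (or, more precisely, to the finitely generated analytic case established in \cite{HMP}), keeping careful track of the ``left ideal'' structure. The nontrivial direction is of course the forward implication: if $q(X)v=0$ whenever all $p_j(X)v=0$, then $q\in I$. First I would set up the evaluation formalism cleanly: fix $n$, let $X=(X_1,\dots,X_g)\in(\R^{n\times n})^g$, and observe that the map $p\mapsto p(X)$ is a unital $\R$-algebra homomorphism $\FA\to\R^{n\times n}$ (note that since the $p_i$ are analytic, the value $p(X)$ does not depend on any choice for the $X_i^*$; this is why analyticity matters). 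Then $V(I)^{(n)}$ is, for each $n$, the set of pairs $(X,v)$ with $v$ in the common kernel $\bigcap_j \ker p_j(X)$, and membership $q\in\sq I$ says precisely that $\ker\big(\bigoplus_j p_j(X)\big)\subseteq\ker q(X)$ for every $n$ and every $X$.

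The core step is to invoke the result of \cite{HMP} (strong Nullstellensatz for left ideals generated by analytic polynomials), which is exactly the content being asserted; so in practice the ``proof'' here is a citation plus the verification that the hypotheses of that theorem are met and that its conclusion is stated in the same form. If instead the intention is to \emph{derive} Theorem~\ref{thm:statz} from the single-generator Bergman--Hadwin--McCarthy type statement, I would proceed by a direct-sum/column trick: replace the tuple $(p_1,\dots,p_m)$ by the single analytic ``column'' relation and argue that the common kernel condition for the $p_j$ at $X$ coincides with the kernel condition for the stacked operator, then transfer the conclusion $q=\sum_j f_j p_j$ back by splitting the single Bezout-type identity into its components. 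One must check that this splitting stays inside $\FA$ (no transpose variables are introduced) and respects degrees, but that is routine once the single-generator case is in hand.

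The main obstacle is the passage from ``vanishing on all finite-dimensional representations'' to the algebraic membership $q=\sum f_i p_i$: this is where the real substance of \cite{HMP} lives, and it cannot be circumvented by elementary manipulation — one genuinely needs the structure theory (directed-system / generic-matrix arguments, or the explicit construction of a representation detecting non-membership). Accordingly, I expect the bulk of the work to be quoting and, if necessary, adapting that machinery; the surrounding steps — the homomorphism property of evaluation, the kernel reformulation, and the reverse implication $q=\sum f_i p_i\Rightarrow q(X)v=0$ (which is immediate from $p_j(X)v=0$ and $q(X)=\sum f_j(X)p_j(X)$) — are straightforward. A secondary point to be careful about is that $V(I)$ ranges over \emph{all} $n$ simultaneously, so the detecting representation produced in the hard direction must be finite dimensional; this is guaranteed by the cited theorem but is worth flagging explicitly.
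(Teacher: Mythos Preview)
Your reading is correct: the paper does not prove Theorem~\ref{thm:statz} at all; it is stated as a known result and attributed to \cite{HMP} (with the single-generator precursor credited to Bergman via \cite{HM}). So ``cite \cite{HMP} and check the hypotheses match'' is precisely the paper's own approach, and your proposal aligns with it.

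One small correction to your commentary: in this paper's setup there is no ``choice for the $X_i^*$''; the evaluation homomorphism on all of $\FA$ sends $x_i^*$ to the real transpose $X_i^T$, so $p(X)$ is well defined whether or not $p$ is analytic. Analyticity is not what makes evaluation well defined; rather, it is the structural hypothesis under which the cited Nullstellensatz goes through. Your column-trick reduction to a single generator is a reasonable heuristic, but it is not needed here and is not what the paper does.
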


We pause to make two remarks related to Theorem \ref{thm:statz}. 
First, no powers  $q^k$ are needed, contrary to the case
in the classical commutative Hilbert Nullstellensatz  or 
the real Nullstellensatz. This absence of powers
 seems to be the pattern for free algebra
situations.  

Secondly, while there are other notions of zero of a free polynomial, the
 one used here is particularly suited for studying left ideals and
 has proved fruitful in a variety of other contexts; e.g.~\cite{HMP,HM3}.
 One alternative notion is to say that $X$ is a zero of $p$ if $p(X)=0$.
 However, in this case, for $R\subset \bigcup_{n\in \N}(\R^{n\times n})^g,$  the set $\{p\mid p(X)=0 \mbox{ for all } X\in R\}$ is
 a two-sided ideal.  Another choice is to declare $X$ a zero of $p$ if 
 $p(X)$ fails to be invertible, but then $\{p\mid \det(p(X))=0 \mbox{ for all } X\in R\}$
 is not closed under sums.

What about ideals generated by  $p_j$ which  are not analytic?
To shed light on the basic question of
which ideals have the \lnss, we seek
an algebraic description of the \rad{} $\sqrt[]{I}$
similar to the notion of real radical in the classical
real algebraic geometry,
cf.~\cite[Chapter 4]{BCR},
\cite[Chapter 2]{Mar},  \cite[Chapter 4]{PD} or \cite{Sce}.
For this we introduced real ideals in \cite{CHMN}. Now we recall these definitions.

A left ideal $I$ of $\FA$ is said to be \df{real}
if for every $a_1,\ldots,a_r\in\FA$
such that
\[
\sum_{i=1}^r a_i^\ast a_i \in I+I^\ast ,
\]
we have that $a_1,\ldots,a_r \in I$.
Here  $I^*$ is the right ideal $
I^* = \{ a^* \mid a\in I \}.$
An intersection of a family of real ideals is a real ideal.
For a left ideal $J$ of $\FA$ we call the ideal
\[
\rr{J} = \bigcap_{\substack {I \supseteq J\\ I \text{ real}}} I
=\text{the smallest real ideal containing } J
\]
\df{the {\qrr}} of $J$.  
It is not hard to show for any left ideal $I$ that
$$ I \subset \rr{I} \subset \sq{I},$$
see \cite{CHMN}. 
The main result of \cite{CHMN} is a real Nullstellensatz
which states:

\bet[\protect{\cite[Theorem 1.6]{CHMN}}]
\label{thm:chmnMain}
A finitely generated left ideal $I$ in $\FA$
  satisfies  $ \rr{I} = \sq{I}$. Thus $I$ has the 
\lnss \ 
if and only if it is real.
\eet
\noindent
This result is not true for infinitely generated ideals,
as is shown in Example \ref{ex:nonss}.

A quantitative version of this theorem 
gives bounds (which we shall need) on the degrees 
of the polynomials involved.

\bet [\protect{\cite[Theorem 2.5]{CHMN}}]
\label{thm:chmnDeg}
Let $I$ be a left ideal in $\FA$ generated by polynomials of
degree bounded by $d$.
Then $I$ is real if and only if
whenever $q_1, \ldots, q_k$ are polynomials with $
deg(q_j) < d$ for each $j$, and
 $\sum_{i=1}^{\ell}q_i^*q_i \in I + I^*,$ then $q_j\in I$ for each
 $j$.
\eet

These results give a clean equivalence but do not tell us 
whether or not
a particular ideal has the \lnss.  
This paper focuses on examples
of ideals $I$ for which we can determine if  
$I= \sq{I}$.

\subsection{Main Results}

Our goal is to determine  which left ideals
$I$ have the property $I= \sqrt[]{I}$. 
We give  a complete solution for principle ideals   
generated by a degree 1 or degree 2 polynomial. 
Other results treat more general situations but are less complete;
we list them below.
 
\subsubsection*{Monomial ideals}
A \df{left monomial ideal} is a left ideal  generated by monomials.  
A word $w\in\axs$ is \df{left unshrinkable} \cite{Lan,Tap} if it cannot be
written as $w=uu^*v$ for some $u,v\in\axs$ with $u\neq 1$.
We show:

\begin{thm}
\label{thm:shrinkMe}
A left monomial ideal is real if and only if
it is generated by left unshrinkable words.
Hence, by Theorem {\rm\ref{thm:chmnMain}}, a finitely generated left monomial ideal is real 
if and only if it has the \lnss.
\end{thm}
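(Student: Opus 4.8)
The plan is to phrase everything in terms of the unique minimal monomial generating set of a left monomial ideal $I$. Since $I$ is spanned by the words it contains, among the suffixes of a word $z\in I$ lying in $I$ there is a shortest one, and this shows $I$ is generated (as a left ideal) by the set $G(I)$ of words $w\in I$ no proper suffix of which lies in $I$; moreover every word of $I$ has a member of $G(I)$ as a suffix, and a short argument shows $G(I)$ is contained in \emph{every} set of words generating $I$. Hence $I$ is generated by left unshrinkable words if and only if every element of $G(I)$ is left unshrinkable, and the theorem reduces to: $I$ is real $\iff$ every $w\in G(I)$ is left unshrinkable.

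For the direction ``real $\Rightarrow$ unshrinkable generators'' I would prove the contrapositive. Suppose some $w\in G(I)$ is left shrinkable, say $w=uu^*v$ with $u\neq 1$, and set $a:=u^*v$. Then $a^*a=v^*uu^*v=v^*w\in I\subset I+I^*$, so if $I$ were real we would get $a\in I$. But $a\notin I$: were $a$ to have some $g\in G(I)$ as a suffix, then $g$ would either be a suffix of $v$ or have the form $g=sv$ with $s$ a nonempty suffix of $u^*$; in both cases $|g|<|w|$ and $g$ is a suffix of $w$, contradicting the suffix-minimality of $w$. Hence $I$ is not real, with witness $r=1$, $a_1=u^*v$.

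For the direction ``unshrinkable generators $\Rightarrow$ real'' the engine is the lemma: \emph{if $w^*w\in I$ then $w\in I$}. To prove it, pick $g\in G(I)$ that is a suffix of the word $w^*w$. If $\deg g\le\deg w$, then $g$ is a suffix of $w$, so $w\in I$. If $\deg g>\deg w$, write $w=\ell_1\cdots\ell_k$ as a product of letters; reading off the last $\deg g$ letters of $w^*w=\ell_k^*\cdots\ell_1^*\ell_1\cdots\ell_k$ forces $g=u^*w$ with $u=\ell_1\cdots\ell_{\deg g-k}$ a nonempty prefix of $w$, and writing $w=uw'$ gives $g=u^*uw'=u^*(u^*)^*w'$, so $g$ is left shrinkable, a contradiction. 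Granting the lemma, suppose $\sum_{i=1}^r a_i^*a_i\in I+I^*$ and argue by induction on $L:=\max_i\deg a_i$ (nothing to prove if all $a_i=0$). Writing $a_i=\sum_w c_{i,w}w$, a length count shows that for a degree-$L$ word $w$ occurring in some $a_i$ the only pair $(p,q)$ of words occurring in the $a_i$ with $p^*q=w^*w$ is $(w,w)$, so the coefficient of $w^*w$ in $\sum_i a_i^*a_i$ is $\sum_i c_{i,w}^2>0$. Since $I+I^*$ is spanned by words and $w^*w$ is self-adjoint, $w^*w\in I$, hence $w\in I$ by the lemma. Replacing each $a_i$ by its degree-$<L$ part $b_i$ makes $a_i-b_i\in I$, and expanding $a_i^*a_i$ and using that $I$ is a left ideal gives $\sum_i b_i^*b_i\in I+I^*$ too; the inductive hypothesis yields $b_i\in I$, so $a_i\in I$. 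The final sentence of the theorem is then immediate from Theorem \ref{thm:chmnMain}.

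The main obstacle is the combinatorial lemma together with the bookkeeping around it: one must use free-monoid cancellation carefully to see that a suffix of $w^*w$ longer than $w$ is forced into the shrinkable shape $u^*w$, and one must verify that peeling off the top-degree part preserves the hypothesis $\sum_i b_i^*b_i\in I+I^*$ at each stage. By contrast, the other direction is essentially the one-line construction of the witness $a=u^*v$ once $G(I)$ is in hand.
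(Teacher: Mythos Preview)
Your proof is correct and follows essentially the same approach as the paper's: both directions hinge on the minimal monomial generating set, and the heart of the ``unshrinkable $\Rightarrow$ real'' direction is the same combinatorial observation that a suffix of $w^*w$ longer than $w$ must have the shape $u^*uw'$ and hence be left shrinkable. Your organization differs only cosmetically---you isolate the implication ``$w^*w\in I\Rightarrow w\in I$'' as a standalone lemma and run an explicit induction on $\max_i\deg a_i$, whereas the paper first strips from each $g_j$ all monomials already in $I$ and then looks once at the top degree---but the underlying argument is the same.
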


\noindent
We emphasize that first claim is proved in Section \ref{sec:mono} and does not
require finite generation.
Absent finitely many generators we need something more to
prove a version of the second claim,
see Example \ref{ex:nonss}.

\def\bs{\bigskip}

\subsubsection*{Principal ideals}
\begin{theorem}
\label{thm:principalI}
 Suppose $I \subset \FA$ is the left ideal generated by 
 a nonzero   polynomial $p$. 
 \ben[\rm (1)] 
\item
\label{it:psqf}
Suppose $p$  is homogeneous.
Then $I$ is real if and only if
$p$ is not of the form
\beq\label{eq:bad}
p = (s + q)f,
\eeq 
where
$s$ is a nonconstant sum of squares,
$q^* = -q$, and $f\in\FA$.
\item
\label{it:psqfih}
Even for nonhomogeneous $p$ the condition in \eqref{eq:bad}
 on $p$
rules out $I$ being real.
 \item
 \label{it:pluriharm}
Suppose $p = a +
b^*$, where $a$ and $b$ are analytic polynomials.
Then $I$ is not real if and only if  $p = a - a^* + c$ 
$($or equivalently, $p = -b +b^* + c)$
for some nonzero constant $c$.
\een
\end{theorem}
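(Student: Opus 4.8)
The plan is to prove~(2) first, since it supplies one of the two implications in~(1) and uses no homogeneity; then to prove the converse in~(1) by reducing to a homogeneous sum-of-squares identity; and finally to derive~(3) from~(1), (2) and the analytic Nullstellensatz (Theorem~\ref{thm:statz}). For~(2) (hence also ``$p$ has the form~\eqref{eq:bad}$\ \Rightarrow\ I$ not real'' in~(1)), write $s=\sum_j b_j^* b_j$ and put $a_j:=\sqrt2\,b_j f$. Since $(s+q)f=p$, and therefore $f^*(s-q)=p^*$, we get
\[
\sum_j a_j^* a_j=2f^* s f=f^*\big((s+q)+(s-q)\big)f=f^* p+p^* f\ \in\ \FA p+p^*\FA=I+I^* ,
\]
so $I$ is not real unless every $b_j f$ lies in $\FA p$. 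If it does, then cancelling $f$ on the right (the free algebra is a domain) gives $b_j=c_j(s+q)$ for some $c_j$, whence $s=\sum_j b_j^* b_j=(s-q)\big(\sum_j c_j^* c_j\big)(s+q)$. The top-degree part of $s$ is symmetric and nonzero while that of $\pm q$ is skew, so these cannot cancel in $s\pm q$ and $\deg(s\pm q)\ge\deg s$; additivity of degree in the free algebra then gives $\deg s\ge 2\deg s$, forcing $s$ to be constant --- a contradiction.

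For the converse in~(1), suppose $p$ is homogeneous of degree $d$ and $I$ is not real. By Theorem~\ref{thm:chmnDeg} there are $q_i$ with $\deg q_i<d$, not all zero (a nonzero element of $\FA p$ has degree $\ge d$, so in this degree range $q_i\in\FA p$ means $q_i=0$), and $g,h$ with $\sum_i q_i^* q_i=gp+p^* h$. Taking the homogeneous component of top degree $2E$, where $E=\max_i\deg q_i\le d-1$, adding the resulting identity to its adjoint, and rescaling, one arrives at
\[
\sum_i r_i^* r_i=up+p^* u^*
\]
with $u$ homogeneous of degree $e:=2E-d\in\{0,1,\dots,d-2\}$, $u\neq0$, and the $r_i$ homogeneous of common degree $E$, not all zero. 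If $e=0$ then $u$ is a nonzero real scalar, so $u(p+p^*)$ is a nonzero sum of squares; hence one of $\pm(p+p^*)$ is a nonzero sum of squares, and $p=\tfrac12(p+p^*)+\tfrac12(p-p^*)$, respectively $-p=\tfrac12\big({-}(p+p^*)\big)+\tfrac12\big({-}(p-p^*)\big)$, exhibits $p=(\sigma+\tau)(\pm1)$ with $\sigma$ a nonconstant sum of squares (a nonzero homogeneous polynomial of positive degree) and $\tau^*=-\tau$ --- precisely the form~\eqref{eq:bad}.

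The case $e\ge1$ is the crux. The plan is to exploit that $\FA$ is a free ideal ring: take a right factor $\phi$ of $p$ encoding the factor in~\eqref{eq:bad} --- the natural candidate is a greatest common right divisor of $p$ and $u^*$ (equivalently, of $p$ and the $r_i$), so that $\FA\phi=\FA p+\FA u^*$ --- write $p=c\phi$ and $u^*=w\phi$ with $c,w$ left coprime, and rewrite the identity as $\phi^*(w^* c+c^* w)\phi=\sum_i r_i^* r_i$. One then wants to show (i) that $w$ must be a nonzero scalar, i.e.\ that $u^*$ right-divides $p$, so the identity reduces to $\phi^*(c+c^*)\phi$ being, up to sign, a sum of squares, and (ii) that $c+c^*$ is then itself, up to sign, a nonconstant sum of squares --- whereupon $p=c\phi$ with $\tfrac12(c+c^*)$ a (signed) nonconstant sum of squares is of the form~\eqref{eq:bad} with $f=\pm\phi$; alternatively one may try an induction on $e$, passing from $p$ to $c$. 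I expect steps (i)--(ii) --- pinning down how the sum-of-squares property behaves under one-sided multiplication by $\phi$, and excluding $c+c^*$ being a constant via the left coprimality of $c$ and $w$ --- to be the main obstacle. The natural tool is the matricization of the homogeneous identity: flatten $p$, $u$ and the $r_i$ into matrices indexed by words of length $E$, which turns $\sum_i r_i^* r_i=up+p^* u^*$ into the assertion that the symmetric part of an explicitly structured matrix is positive semidefinite.

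For~(3), we may assume $p\notin\R$ (otherwise $I=\FA$, which is real). If $p=a-a^*+c$ with $c\neq0$ a constant, then $p+p^*=2c\in I+I^*$, while $p$ has nonzero constant term and hence is a unit in the formal power series completion, so $\FA p$ contains no nonzero constant; thus the constant $\sqrt{2|c|}\notin I$ witnesses that $I$ is not real. Conversely, suppose $p=a+b^*$ is \emph{not} of that form, i.e.\ $a+b$ is not a nonzero constant, and $\sum_i q_i^* q_i=gp+p^* h$ with $\deg q_i<\deg p$; one must show that all $q_i=0$. When $\deg a\neq\deg b$ the leading homogeneous part of $p$ is purely analytic or purely coanalytic, so Theorem~\ref{thm:statz} governs the top degree; when $\deg a=\deg b$, the homogeneous analysis of~(1) shows that leading part is not of the form~\eqref{eq:bad}. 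In either case an induction on degree should then propagate this down to $q_i=0$ for every $i$, and I expect this induction --- like steps (i)--(ii) above --- to demand the most care.
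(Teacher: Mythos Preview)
Your argument for~(2) is complete and essentially the paper's: one simply observes $f^*p+p^*f=2f^*sf$ is a nonzero sum of squares with $\deg f<\deg p$, and your extra verification that not all $b_jf$ lie in $\FA p$ is a correct replacement for the paper's appeal to Proposition~\ref{prop:degBoundsPI}.

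For the converse in~(1), your reduction to a \emph{homogeneous} identity $\sum_i r_i^*r_i=up+p^*u^*$ with $u$ homogeneous of degree $e\le d-2$ is exactly the paper's starting point, and your $e=0$ case is fine. But steps~(i)--(ii) that you flag as ``the main obstacle'' are precisely where the content lies, and your gcrd/left-coprimality outline does not supply a mechanism for forcing $w$ to be a unit. The paper's device is different and cleaner: for homogeneous polynomials the Gram matrix in any fixed bidegree is \emph{unique} (Lemma~\ref{lemma:goodtechnical}), so one chooses a basis for degree-$E$ homogeneous polynomials of the form $w_1u^*,\dots,w_ku^*,r_1,\dots,r_\ell$ and writes the Gram matrix of $up$ in that basis. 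Positivity of the Gram matrix of $up+p^*u^*$ then kills the off-diagonal block outright, which immediately gives $p=ru^*$ with $r+r^*$ a sum of squares (Lemma~\ref{lem:factorSOS}). This simultaneously handles your~(i) and~(ii). Your ``matricization'' hint is pointing at the same object, but without the uniqueness of the Gram representation the argument does not close; left coprimality of $c$ and $w$ by itself does not visibly interact with the sum-of-squares condition.

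For~(3), the ``only if'' direction is fine. For the converse you are missing a genuine ingredient: the paper first proves that a homogeneous $a'+(b')^*$ with $a',b'$ analytic of the same positive degree is \emph{irreducible} (Lemma~\ref{lem:antiHomIrr}). Combined with Lemma~\ref{lem:factorSOS} this forces, in the identity $qp+p^*q^*\in\Sigma^2$ with $\deg q<\deg p$, the leading part of $q$ to be a \emph{scalar}, hence $q$ itself constant and $\pm(p+p^*)$ a nonzero sum of squares; a short degree argument then gives $p=a-a^*+c$. Your proposed route via Theorem~\ref{thm:statz} does not connect: that theorem concerns ideals generated by analytic polynomials, whereas here only the leading part of $p$ is analytic (or antianalytic), and knowing that the leading part is not of the form~\eqref{eq:bad} does not by itself control the lower-degree interactions. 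The irreducibility lemma is what pins $q$ down to a constant and makes the induction unnecessary.
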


\noindent
Proving this theorem is the subject of Section \ref{sec:hom}.

\subsubsection*{Analytic homogeneous ideals}
 Turning to the question of  identifying not necessarily finitely generated
 left ideals with the \lnss, it turns out that an ideal generated by
 analytic homogeneous polynomials has the \lnss. The formal result
 is stated as  Proposition \ref{prop:infana}.

\subsubsection*{Algorithms}
We give a computer algorithm to determine if
a given $I$ is a real ideal, hence if $I$ has the \lnss, see
Subsection \ref{sec:poor}.
This is a more practical offshoot of an algorithm given 
in \cite{CHMN}.

Our classification of univariate real  ideals generated by  $p$  of degree 2
is done in Section  \ref{sec:quad}  and it 
illustrates techniques derived in Section \ref{sec:hom}.

\section{Monomial Ideals}\label{sec:mono}

In this section we describe what is known about monomial ideals
both finitely and infinitely generated. 
We start with basic facts about monomial ideals.

\begin{lem}\label{lem:monIdeal}
For a left ideal $ I\subseteq\FA$ the following are
equivalent:
\ben[\rm (i)]
\item
$ I$ is a left monomial ideal;
\item
A polynomial $f\in \FA$ is in $I$ iff all of the monomials
appearing in $f$ are in $I$.
\item
A polynomial $f\in \FA$ is in $I + I^*$ iff all of the monomials
appearing in $f$ are in $I + I^*$.
\item
A polynomial $f\in \FA$ is in $I + I^*$ iff all of the monomials
appearing in $f$ are in $I$ or $I^*$.
\een
\end{lem}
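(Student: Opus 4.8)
The plan is to prove the cycle $(i)\Rightarrow(ii)\Rightarrow(iv)\Rightarrow(iii)\Rightarrow(i)$, the first three implications being short and $(iii)\Rightarrow(i)$ being the heart of the matter. The one elementary fact underlying everything is that $\axs$ is an $\RR$-vector space basis of $\FA$; hence for any subspace $W\subseteq\FA$ the condition ``$f\in W$ iff every word occurring in $f$ lies in $W$'' is equivalent to ``$W=\mathrm{span}(W\cap\axs)$'', in which case $W\cap\axs$ is the spanning set, and, moreover, a word that lies in the span of a set of distinct words must itself be one of them. Call such a $W$ a \emph{monomial subspace}. Then $(i)\Leftrightarrow(ii)$: if $I$ is the left ideal generated by words $m_\lambda$, then $I=\mathrm{span}\{um_\lambda:u\in\axs\}$ is spanned by words, hence a monomial subspace, which is (ii); conversely if $I$ is a monomial subspace, then $I\cap\axs$ is closed under left multiplication by words (since $I$ is a left ideal), so $I$ is the left ideal generated by $I\cap\axs$, which is (i).

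For $(i)\Rightarrow(iv)$: the adjoint is an $\RR$-linear involution of $\FA$ permuting $\axs$, so $I^{*}$ is again a monomial subspace; thus $I+I^{*}=\mathrm{span}\big((I\cap\axs)\cup(I^{*}\cap\axs)\big)$, and by the ``word in a span of words'' remark $(I+I^{*})\cap\axs=(I\cap\axs)\cup(I^{*}\cap\axs)\subseteq I\cup I^{*}$, which gives (iv). The implication $(iv)\Rightarrow(iii)$ is immediate, since $I\cup I^{*}\subseteq I+I^{*}$.

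It remains to prove $(iii)\Rightarrow(i)$, which I would do by contraposition: assuming $I$ is not a monomial left ideal, I produce an element of $I+I^{*}$ one of whose words lies outside $I+I^{*}$, contradicting (iii). Choose $f\in I$ having a word not in $I$; subtracting from $f$ the sum of those of its words that do lie in $I$ (a sum which is again in $I$), one may assume $f\neq0$ and that \emph{no} word of $f$ lies in $I$. If some word of $f$ already lies outside $I+I^{*}$ we are done, so assume every word of $f$ lies in $I+I^{*}$. Pick any word $w$ of $f$ (so $w\notin I$) and write $w=a+b^{*}$ with $a,b\in I$; since $w\notin I$, also $b^{*}\notin I$, although $b^{*}\in I^{*}\subseteq I+I^{*}$. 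The idea is that left multiplication only improves the $a$-part: for a word $u$ we have $uw=ua+ub^{*}$ with $ua\in I$, so $uw\in I+I^{*}$ iff $ub^{*}\in I+I^{*}$; and $uw$ is a word of $uf\in I\subseteq I+I^{*}$. So it suffices to find a word $u$ with $ub^{*}\notin I+I^{*}$, i.e.\ to show $\FA b^{*}\not\subseteq I+I^{*}$. Using the freeness of $\FA$ — so that the images in $\FA/(I+I^{*})$ of the words lying outside $I+I^{*}$ remain linearly independent — one sees that $\FA b^{*}\subseteq I+I^{*}$ would force every word occurring in $b^{*}$ to have all its left translates in $I+I^{*}$, and one must rule this out given $b^{*}\notin I$.

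The main obstacle is exactly this last point: although each word of $f$ individually happens to lie in $I+I^{*}$, one must exhibit a \emph{left translate} of some word of $f$ that escapes $I+I^{*}$. The source of difficulty is that a monomial order on $\axs$ interacts cleanly with multiplication but not obviously with the adjoint (only $\deg\mathrm{LM}(p^{*})=\deg\mathrm{LM}(p)$ is automatic), so the argument cannot simply track leading words; instead it proceeds by an induction on degree (or on the leading word) that records which words $v$ satisfy $\FA v\subseteq I+I^{*}$ and shows that the words occurring in $b^{*}$ cannot all be of this type unless $b^{*}\in I$ — and the parallel statement for $I^{*}$ must be carried along simultaneously, since the decomposition $w=a+b^{*}$ mixes $I$ with $I^{*}$.
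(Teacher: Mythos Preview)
Your treatment of the easy implications — $(i)\Leftrightarrow(ii)$, $(i)\Rightarrow(iv)$, and $(iv)\Rightarrow(iii)$ — is correct and in fact more explicit than the paper's, which disposes of everything in three sentences (``obvious'', ``similarly'', ``clear''). You are also right to isolate $(iii)\Rightarrow(i)$ as the crux and to be unable to close it. But the reason you cannot close it is not a missing trick: \emph{the implication $(iii)\Rightarrow(i)$ is false}, so the obstacle you describe is genuine and fatal.

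Here is a counterexample in one variable. Let $I\subset\RR\langle x,x^*\rangle$ be the left ideal generated by $x+2x^{*}$ together with all four words of length~$2$. Then $I$ contains every word of length $\ge 2$, while an element of $I$ of degree $\le 1$ must lie in $\RR\,(x+2x^{*})$; in particular $x\notin I$, so $I$ is not a monomial left ideal and (i),(ii) fail. On the other hand $I^{*}$ also contains every word of length $\ge 2$, and at degree~$1$ the sum $I+I^{*}$ contains the independent vectors $x+2x^{*}$ and $(x+2x^{*})^{*}=2x+x^{*}$, hence all of degree~$1$; since every generator has positive degree, $1\notin I+I^{*}$. Thus $I+I^{*}$ is exactly the augmentation ideal, which is spanned by words, so (iii) holds. (Condition (iv) fails too: $x\in I+I^{*}$ but $x\notin I\cup I^{*}$, so the paper's claim that ``(iii) and (iv) are equivalent'' is also wrong.)

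So neither your attempt nor the paper's one-liner can succeed on $(iii)\Rightarrow(i)$: the paper's ``follows similarly'' simply glosses over a false assertion, whereas you at least locate precisely where the argument breaks. Fortunately this does not damage anything downstream: the only use of the lemma in the paper (in the proof of Theorem~1.3) invokes the direction $(i)\Rightarrow(iv)$, which is valid.
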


\begin{proof}
(i) $\Rightarrow$ (ii) is obvious. If (ii) holds, then $ I$ is generated by the set of
all monomials in $ I$ so (i) holds. The equivalence of (i) and (iii) follows similarly.
It is clear that (iii) and (iv) are equivalent.
\end{proof}

\subsection{Proof of Theorem \ref{thm:shrinkMe}}

Suppose $ I$ is generated by 
left unshrinkable words.
Our goal is to show that $I$ is a real ideal. (We emphasize that this part of the proof does not require finite generation.)

Assume $I$ is not real and consider a finite sum of hermitian squares
\[
s=\sum_{j} g_j^* g_j
\]
such that $s \in I+I^*$ and $g_j \not\in I$ for some $j$. Decompose each $g_j$ as
\[g_j = \sum_{w\in\axs} a_{w,j} w =
 \sum_{\substack{w\in\axs\\w \in  I}} a_{w,j}w + \sum_{\substack{u\in\axs\\u \not\in  I}} a_{u,j}u= p_j+q_j\]
 for some $a_{v,k}\in \R$. 
 Here $p_j= \sum_{w \in  I} a_{w,j}w \in I$ and $q_j=\sum_{u \not\in  I} a_{u,j}u$
 is either zero (if all of its coefficients are zero) or does not belong to $I$ (it it has a nonzero coefficient).
 Since
\[
s = \sum_{j} g_j^*g_j = \sum_j (p_j^*+q_j^*)(p_j+q_j) = \sum_j \left(p_j^*p_j+ p_j^*q_j+q_j^*p_j+
q_j^*q_j\right)
\]
belongs to $I +  I^*$, it follows that 
\begin{equation}
 \label{eq:sosNotInI}
\sum_j q_j^*q_j \in  I +  I^*.
\end{equation}
Moreover, since $g_j \not\in I$ for some $j$, also $q_j \ne 0$ for some $j$. We may assume that all $q_j$
in \eqref{eq:sosNotInI} are nonzero and hence each nonzero term of each $q_j$ in \eqref{eq:sosNotInI} lies outside $I$.
Since the highest degree terms in a sum of squares cannot cancel,
the highest degree terms in $\sum_j q_j^*q_j$ are multiples of words of the form $w^*w$ for words $w \not\in  I$.
Lemma \ref{lem:monIdeal} implies that $w^*w \in I + I^*$ for each such $w$.
Since $I$ is a left monomial ideal, it follows that $w^*w$ is of the form $v_1v_2$, where $v_2$ is one of the unshrinkable words which generate $I$.

If $\deg(w) \geq \deg(v_2)$, then this implies that $w = v'v_2$, where $v'$ is the right-hand piece of $v_1$,
which is a contradiction since $w \not\in I$. 

If $\deg(w) < \deg(v_2)$, then
$v_2 = w'w$, where $w'$ is the left-hand piece of $v_2$, and $\deg(w') > 0$.
Thus $v_1v_2 = v_1w'w = w^*w$, which implies that $w^* = v_1w'$,
which implies that $w = (w')^*v_1^*$.  Therefore $v_2 = w'(w')^*v_1^*$,
which is a contradiction since $v_2$ is unshrinkable.

\smallskip
Conversely, suppose $I$ is a real left monomial ideal.
Let $\mathcal B$ be a minimal set of words which generate $I$.
Without loss of generality, we may assume that for each $w \in \mathcal B$,
there exist no $u,v\in\FA$, with $v \in I$, such that $w = uv$.
Assume $w \in \mathcal B$ is shrinkable.  Let
$w = u^*uv$, with $u$ some nonconstant word.
Thus $v^*u^*uv \in I + I^*$, which implies that
$uv \in I$ by $I$ being real.  This however contradicts
the minimality of $\mathcal B$.

\smallskip
The second part of the theorem now follows from 
Theorem \ref{thm:chmnMain}, and this does require finite
generation; cf.~Example \ref{ex:nonss}.
\qed

\subsection{Real monomial ideals without the \lnss}
\label{sec:nonfin}

When does a left monomial ideal have the \lnss?
Theorem \ref{thm:shrinkMe} gives us a necessary condition,
namely that it is generated by left unshrinkable words.
However, this condition is not sufficient, see Example \ref{ex:nonss}.
In the positive direction, some infinitely unshrinkably generated left ideals
have the \lnss, see Section \ref{sec:nonfin2}.

We begin with the following basic fact:
 if $I$ is the intersection of some family of finitely generated real left ideals $I_\alpha$, then
\beq
\label{eq:finiteIntersec}
\sq{I} \subset \bigcap_{\alpha} \sq{I_{\alpha}} = \bigcap_{\alpha} I_{\alpha}
= I \subset \sq{I}.
\eeq
Therefore, to show that a left ideal $I$ satisfies $I = \sq{I}$, it suffices
to show it is an intersection of finitely generated real left ideals.

 Now we give an example of a real monomial ideal 
 on which the real Nullstellensatz fails.
\begin{example}
\label{ex:nonss}
If $x=(x_1,x_2)$ and $I\subseteq\FA$ is the left ideal generated by the set
\[\big\{ x_1(x_2^*x_2)^dx_1 \mid d \geq 2\big\}\]
then $I$ is real but $\sq{I} \neq I$.
\end{example}

\begin{proof}
If
\[u^*u v = x_1(x_2^*x_2)^dx_1\]
for some words $u,v$ and for some $d$, then either $\deg(u) = 0$ or $u^*$ starts with $x_1$.
The latter case cannot happen since this would imply that $u$ ends with $x_1^*$ and there is no $x_1^*$ in $x_1(x_2^*x_2)^dx_1$.
Therefore for each $d$, the monomial $x_1(x_2^*x_2)^dx_1$ is left unshrinkable, which 
by Theorem \ref{thm:shrinkMe}
implies that the left ideal $I$ is real.

Next, by definition
\[
 \sq{I} = \bigcap_{(X,v) \in V(I)} \cI\big(\{(X,v)\}\big),
\]
so we will show that there is a polynomial $p \not\in I$ contained in each left
ideal $\cI\big(\{(X,v)\}\big)$ with $(X,v) \in V(I)$.
Suppose $(X,v) \in V(I)$, where $X = (X_1, X_2)$ is a pair of $n\times n$
matrices and $v\in\R^n$. The matrix ${X_2}^*X_2$ is positive
semidefinite, so we can decompose it as
\[ {X_2}^*X_2 = U^* \Lambda U,\]
where $U$ is unitary and $\Lambda$ is a diagonal matrix with nonnegative eigenvalues $\lambda_1, \ldots, \lambda_n$.
Let $w = UX_1v$ be a vector with $i^{\rm th}$ entry denoted $w_i$,
 and let $Y = X_1U^*$ be a matrix with $(i,j)$ entry  $y_{ij}$.  Then for each $d \geq 2$, we see
 \[
\begin{aligned}
\notag
X_1(X_2^*X_2)^d X_1v = Y\Lambda^d w 
&= \begin{bmatrix}
y_{11}& y_{12}& \ldots&y_{1n}\\
y_{21}&y_{22}&\ldots &y_{2n}\\
\vdots&\vdots&\ddots&\vdots\\
y_{n1}&y_{n2}&\ldots&y_{nn}
\end{bmatrix}
\begin{bmatrix}
\lambda_1^d& 0& \ldots&0\\
0&\lambda_2^d&\ldots &0\\
\vdots&\vdots&\ddots&\vdots\\
0&0&\ldots&\lambda_n^d
\end{bmatrix}
\begin{bmatrix}
w_1\\
w_2\\
\vdots\\
w_n
\end{bmatrix}\\
\notag
&=
\begin{bmatrix}
\sum_{j=1}^{n} y_{1j}\lambda_j^d w_j\\
\sum_{j=1}^{n} y_{2j}\lambda_j^d w_j\\
\vdots\\
\sum_{j=1}^{n} y_{nj}\lambda_j^d w_j
\end{bmatrix}
=
\begin{bmatrix}
0\\0\\ \vdots \\ 0
\end{bmatrix}.
\end{aligned}
\]

If  $\Lambda = 0$, then $X_1X_2^*X_2X_1v = 0$.
Otherwise, let $\xi_1, \ldots \xi_k$ be the distinct nonzero values of
$\lambda_1, \ldots, \lambda_n$. For each $d \geq 2$ and each $i$,
\[
\sum_{j=1}^k \left( \sum_{\lambda_{\ell} = \xi_j} y_{i\ell}w_{\ell} \right)
\xi_j^d = 0.
\]
Since the matrix
\[
\begin{bmatrix}
\xi_1^2 & \xi_2^2 & \ldots & \xi_k^2\\
\xi_1^3 & \xi_2^3 & \ldots & \xi_k^3\\
\vdots&\vdots&\ddots&\vdots\\
\xi_1^{k+1} & \xi_2^{k+1} & \ldots & \xi_k^{k+1}
\end{bmatrix}
=
\begin{bmatrix}
1& 1 & \ldots & 1\\
\xi_1 & \xi_2 & \ldots & \xi_k\\
\vdots&\vdots&\ddots&\vdots\\
\xi_1^{k-1} & \xi_2^{k-1} & \ldots & \xi_k^{k-1}
\end{bmatrix}
\begin{bmatrix}
\xi_1^2 &0 & \ldots & 0\\
0 & \xi_2^2 & \ldots & 0 \\
\vdots&\vdots&\ddots&\vdots\\
0 & 0 & \ldots & \xi_k^{2}
\end{bmatrix}
\]
has nonzero determinant, each
$\sum_{\lambda_i = \xi_j} y_{ij}w_j = 0.$
Therefore
\begin{align}
\notag
X_1^*X_2^*X_2X_1v = Y\Lambda w &=\begin{bmatrix}
\sum_{j=1}^{n} y_{1j}\lambda_j w_j\\
\sum_{j=1}^{n} y_{2j}\lambda_j w_j\\
\vdots\\
\sum_{j=1}^{n} y_{nj}\lambda_j w_j
\end{bmatrix}=\begin{bmatrix}
\sum_{j=1}^k \left( \sum_{\lambda_{\ell} = \xi_j} y_{1\ell}w_{\ell} \right)
\xi_j\\
\sum_{j=1}^k \left( \sum_{\lambda_{\ell} = \xi_j} y_{2\ell}w_{\ell} \right)
\xi_j\\
\vdots\\
\sum_{j=1}^k \left( \sum_{\lambda_{\ell} = \xi_j} y_{k\ell}w_{\ell} \right)
\xi_j
\end{bmatrix} = \begin{bmatrix}
0\\0\\ \vdots \\ 0
\end{bmatrix}.
\end{align}
Hence $x_1x_2^*x_2x_1 \in \sq{I}$.  However, $x_1x_2^*x_2x_1 \not\in I$
since it is not a multiple of any of the monomial generators of $I$, because it
has degree $4$ and the homogeneous generators of $I$ each have degree of at least $6$.
\end{proof}
Note that what the above implies is that if $I \subset J$, 
where $J$ is a finitely generated real left ideal, 
then $x_1x_2^*x_2x_1 \in J \setminus I$. 
 Therefore, the intersection of all real finitely 
 generated ideals containing $I$ necessarily contains
  $x_1x_2^*x_2x_1$.

\subsection{Several non-finitely generated ideals with the \lnss}
\label{sec:nonfin2}

In this section we study non-finitely generated left ideals that satisfy the \lnss.

A bright spot in our understanding are  left ideals with analytic
monomial generators. Analytic monomials are certainly unshrinkable
and as we shall see always generate real ideals.
In fact, this works more generally than for just monomials:

\begin{prop}
\label{prop:infana}
 Suppose $I$ is generated by homogeneous analytic polynomials $p_1, \ldots,
p_k, \ldots$.
Then $\sq{I} = I$.
\end{prop}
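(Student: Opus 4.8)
The plan is to realize $I$ as an intersection of finitely generated real left ideals and then quote \eqref{eq:finiteIntersec}. For each $m\in\N$ let $M_m$ be the left ideal generated by all $g^m$ analytic words (words in $x_1,\dots,x_g$ only) of length exactly $m$. Analytic words are left unshrinkable, so by Theorem \ref{thm:shrinkMe} each $M_m$ is a finitely generated real monomial ideal; equivalently, $M_m$ is generated by analytic polynomials, so it is real by Theorem \ref{thm:statz} together with Theorem \ref{thm:chmnMain}.

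First I would check that $I+M_m$ is again finitely generated and generated by analytic polynomials. The crucial point, and the only place the homogeneity hypothesis is used, is that every homogeneous analytic polynomial of degree $\ge m$ already lies in $M_m$: such a polynomial is a linear combination of analytic words $w$ of a fixed length $\ge m$, and factoring $w=w'w''$ with $|w''|=m$ gives $w=w'w''\in\FA w''\subseteq M_m$. Consequently $I+M_m$ is the left ideal generated by the monomial generators of $M_m$ together with those generators $p_i$ having $\deg p_i<m$. All such $p_i$ lie in the finite-dimensional space $\FA_{m-1}$, so a finite subset of them has the same linear span and hence generates the same left ideal; therefore $I+M_m$ is finitely generated. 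Being generated by analytic polynomials, $I+M_m$ is real by Theorems \ref{thm:statz} and \ref{thm:chmnMain}.

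Next I would show $\bigcap_{m\in\N}(I+M_m)=I$. The inclusion $\supseteq$ is immediate. For $\subseteq$, take $q$ in the intersection and choose $m>\deg q$; write $q=a+b$ with $a\in I$ and $b\in M_m$. Every monomial occurring in an element of $M_m$ has degree $\ge m>\deg q$, so comparing the components of degree $<m$ on the two sides of $q=a+b$ forces $q$ to equal the sum of the homogeneous components of $a$ of degree $<m$. Since $I$ is generated by homogeneous polynomials it is a graded left ideal, so every homogeneous component of $a$ lies in $I$, and therefore $q\in I$.

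Finally, applying \eqref{eq:finiteIntersec} to the family $\{I+M_m\}_{m\in\N}$ of finitely generated real left ideals with intersection $I$ yields $\sq{I}\subseteq\bigcap_m\sq{I+M_m}=\bigcap_m(I+M_m)=I\subseteq\sq{I}$, whence $\sq{I}=I$. The main (and really the only) obstacle is the finite-generation claim for $I+M_m$, and this is precisely where homogeneity is essential: a non-homogeneous analytic generator of large degree would contribute low-degree terms not absorbed into $M_m$, and the reduction to finitely many generators below degree $m$ would fail.
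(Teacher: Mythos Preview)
Your proof is correct and follows essentially the same strategy as the paper: the paper's ideals $I^{(d)}$ (generated by the $p_i$ of degree $\le d$ together with all analytic words of degree $d+1$) coincide with your $I+M_{d+1}$, and both arguments then invoke \eqref{eq:finiteIntersec}. Your write-up is in fact more careful on two points the paper glosses over---the finite-generation argument via $\dim\FA_{m-1}<\infty$, and the use of the graded structure of $I$ to establish $\bigcap_m(I+M_m)=I$.
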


\begin{proof}
 For each $d\in\N$, let $I^{(d)}$ be the ideal generated by all
polynomials $p_i$ of degree  $\leq d$ as well as all analytic words of
degree $d+1$.  In this case, each $p_j \in I^{(d)}$, and each $I^{(d)}$ is
finitely generated.  Further, since $I^{(d)}$ is generated by analytic
polynomials, it is real.  Therefore
\[ \sq{I} \subset \bigcap_{d=1}^{\infty} I^{(d)}.\]
At the same time
\[ I = \bigcap_{d=1}^{\infty} I^{(d)},\]
since the elements of $I^{(e)}$ of degree bounded by $d$, where $e \geq d$,
are precisely the elements of $I$ of degree bounded by $d$, cf.~Lemma \ref{lem:monIdeal}.
\end{proof}

This compares to \cite{HMP} which required finite-generation by analytic polynomials
(which may not be homogeneous).

In distinction to the analytic case the 
 next monomial ideal $I$  we treat has generators with adjoints.
The generators are very simple  left unshrinkable monomials
but still it is a bit tricky to prove the ideal is real.
Indeed
 we shall write $I$  
 as the intersection of finitely-generated ideals which
are not monomial ideals and which are 
 not even generated by homogeneous polynomials.

\begin{example}
\label{ex:infgenGood}
Let $x=(x_1,x_2)$ and let $I\subset \FA$ be the left ideal generated by 
the set \[\{x_1(x_2^*x_2)^d \mid d\ge 0\}.\]
Then $\sq{I} = I$.
\end{example}

\begin{proof}
 Let $I_{\lambda}$ be the left ideal generated by
$x_1$ and $x_2^*x_2 - \lambda$, where $\lambda > 0$.
By the discussion at the beginning of section \ref{sec:nonfin},
the claim that $\sq{I} = I$ follows from Steps 1 and 2 below.

{\sc Step {\rm 1}}: $I_{\lambda}$ is real. \\
Since $I_{\lambda}$ is generated by polynomials
of degree bounded by $2$, we only have to check
that no sum of squares of polynomials outside of $I$ of
degree $1$ or less
is in $I_{\lambda} + I_{\lambda}^*$.
Since $x_1 \in I_{\lambda}$, we need only consider
squares of polynomials in the span of all monomials of degree $1$ or less
which are not equal to $I$.
Suppose that
\[ 
 p =  
\begin{bmatrix}
 x_1^*\\x_2\\x_2^*\\1
   \end{bmatrix}^*
A
\begin{bmatrix}
    x_1^*\\x_2\\x_2^*\\1
   \end{bmatrix} \in \left(I_{\lambda} + I_{\lambda}^*\right)
   \]
   is a symmetric polynomial, i.e., $A$ is symmetric.
   It is straightforward to show that $p$ must be in the span of $x_2^*x_2 - \lambda$ and $x_1 + x_1^*$.  Therefore,
     \[
  A = \left[
\begin{array}{cccc}
 0&0&0&\beta\\
0&\alpha&0&0\\
0&0&0&0\\
\beta&0&0&-\lambda \alpha
\end{array}
\right].
\]
If $p$ is a nonzero sum of squares, then $(0,0) \neq (\alpha, \beta) \in\R^2$,
which implies that
\[\beta = 0,\  \alpha \neq 0, \quad \text{and} \quad \alpha \left[
\begin{array}{cc}
1&0\\
0&-\lambda 
\end{array}
\right] \succeq 0.\]
But this contradicts
 $\lambda > 0$.

{\sc Step {\rm 2}}: $\bigcap_{\lambda > 0} I_{\lambda} = I$.\\
First, consider
\[x_1(x_2^*x_2)^d.\]
If $d = 0$, then $x_1 \in I_{\lambda}$.
Further, we see that
\[ x_1(x_2^*x_2)^d = \lambda x_1(x_2^*x_2)^{d-1} +
x_1(x_2^*x_2)^{d-1}(x_2^*x_2 - \lambda),\]
hence by induction $x_1(x_2^*x_2)^d \in I_{\lambda}$ for all $d$.
Since $I_{\lambda}$ contains the generators of $I$,
\[ I \subset \bigcap_{\lambda > 0} I_{\lambda}. \]

Next, suppose $p \in \bigcap_{\lambda > 0} I_{\lambda}$.
First, $p \in I_1$, so $p$ is of the form
\[ p = q x_1 + \sum_{i=0}^d r_i (x_2^*x_2 - 1)^i,\]
for some $d$, where $r_i, q \in \FA$.
Without loss of generality, assume that the terms of the $r_i$ are not divisible on the right
by $x_2^*x_2$; for example, if $r_i = \tilde{r}_ix_2^*x_2$
for some $\tilde{r}_i$, then
\[ r_i(x_2^*x_2 - 1)^i = \tilde{r}_i (x_2^*x_2)(x_2^*x_2 - 1)^i
= \tilde{r}_i(x_2^*x_2 - 1)^{i+1} + \tilde{r}_i (x_2^*x_2 - 1)^i . \]
Given another $\lambda > 0$, $p \in I_{\lambda}$.
We see that
\[ x_2^*x_2 - 1 + I_{\lambda} = \lambda - 1 +I_{\lambda},\]
and inductively that
\[(x_2^*x_2 - 1)^i + I_{\lambda} =  (\lambda - 1)^i + I_{\lambda}. \]
Therefore
\[\sum_{i=0}^d r_i (\lambda - 1)^i \in I_{\lambda}. \]
The leading term of this resulting polynomial
must be divisible on the right by $x_1$ or $x_2^*x_2$
since
\[ \{ x_1, x_2^*x_2 - \lambda \}\]
is a left Gr\"obner basis.
Since the
terms of each $r_i$ are not divisible
on the right by $x_2^*x_2$ by construction, the leading term of
$\sum_{i=0}^d r_i (\lambda - 1)^i$ is divisible on the
right by $x_1$.  Since this is true for arbitrary
$\lambda > 0$,  the leading term of
some $r_i$ is divisible on the right by $x_1$.
Let this term be denoted $r_i'$.  Then
\[ p - r_i'(x_2^*x_2)^i \in \bigcap_{\lambda > 0} I_{\lambda}\]
since $r_i'(x_2^*x_2)^i \in I \subset \bigcap_{\lambda > 0} I_{\lambda}$
and is in $I$ if and only if $p$ is.
We can reduce $p$ inductively to deduce that $p \in I$.
\end{proof}

\section{Principal Left Ideals}
\label{sec:hom}

We now turn our attention to principal (i.e., singly generated) left ideals.
Our focus here is on homogeneous generators. The main results
in this section are Theorem \ref{thm:prinMain} that precisely describes
when a principal homogeneous left ideal is real, and the accompanying
Algorithm \ref{algor:factor}, producing a practical test for determining whether a 
given homogeneous polynomial $p\in\FA$ generates a real left ideal.

\subsection{Factorization of Homogeneous Polynomials}

Checking whether a polynomial $p\in\FA$ factors as a product 
$p=p_1p_2$ of polynomials of smaller degree amount to solving a large
system of linear equations. For 
homogeneous noncommutative polynomials 
factorization can be done more efficiently, and this is what we describe
in this subsection.

\begin{lemma}
\label{lemma:goodtechnical}
Let $V(x) = (v_1, \ldots, v_r)^*\in \FA^r$
be a vector of linearly independent
homogeneous degree $d_r$  polynomials, and let  $W(x) = (w_1, \ldots, w_s)^*\in \FA^s$ 
be a vector of linearly independent
homogeneous degree $d_s$ polynomials. For
 $A \in \R^{r \times s}$, we have
\[V(x)^* A W(x) = 0
\quad\iff\quad
A = 0.
\]
\end{lemma}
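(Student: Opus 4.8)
The reverse implication is immediate, so the plan is to prove that $V(x)^*AW(x)=\sum_{i,j}A_{ij}\,v_i^*w_j=0$ forces $A=0$; equivalently, that the family $\{v_i^*w_j:1\le i\le r,\ 1\le j\le s\}$ is linearly independent in $\FA$. I would deduce this from two elementary facts about the grading of $\FA$ by word length. Write $H_d\subset\FA$ for the span of all words of length $d$ (the homogeneous component of degree $d$), so that $v_1,\dots,v_r\in H_{d_r}$ and $w_1,\dots,w_s\in H_{d_s}$.

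First, the multiplication map $\mu\colon H_{d_r}\otimes_{\R}H_{d_s}\to H_{d_r+d_s}$, $p\otimes q\mapsto pq$, is \emph{injective}: on the word basis it sends $u\otimes u'$ (with $|u|=d_r$, $|u'|=d_s$) to the word $uu'$, and every word of length $d_r+d_s$ has a unique factorization into a prefix of length $d_r$ and a suffix of length $d_s$, so $\mu$ maps a basis bijectively onto a linearly independent set. Second, the involution $p\mapsto p^*$ restricts to a linear isomorphism $H_{d_r}\to H_{d_r}$ (it permutes length-$d_r$ words via the bijection $u\mapsto u^*$, reversal with each letter replaced by its adjoint); since the $v_i$ are linearly independent and the coefficients are real, $v_1^*,\dots,v_r^*$ are linearly independent in $H_{d_r}$. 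Therefore $\{v_i^*\otimes w_j\}_{i,j}$ is linearly independent in $H_{d_r}\otimes_{\R}H_{d_s}$ (the tensor of two linearly independent families), and applying the injective map $\mu$ shows $\{v_i^*w_j\}_{i,j}=\{\mu(v_i^*\otimes w_j)\}_{i,j}$ is linearly independent in $H_{d_r+d_s}\subset\FA$, which is exactly the desired conclusion.

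There is essentially no obstacle here; the only point requiring a remark is the unique-factorization statement, and that is immediate from homogeneity fixing the split at position $d_r$. For readers who prefer to avoid tensor products, I would give the same argument coefficientwise: writing $v_i=\sum_{|u|=d_r}c^{(i)}_u u$ and $w_j=\sum_{|u'|=d_s}d^{(j)}_{u'}u'$ and collecting, in $\sum_{i,j}A_{ij}v_i^*w_j$, the coefficient of each word $u^*u'$ yields $\sum_{i,j}c^{(i)}_u A_{ij}d^{(j)}_{u'}=0$ for all such $u,u'$, i.e.\ the matrix identity $C^{\top}AD=0$ where the real matrices $C=(c^{(i)}_u)$ and $D=(d^{(j)}_{u'})$ have full row rank $r$ and $s$ respectively (by linear independence of the $v_i$, resp.\ the $w_j$); full row rank of $C$ removes the left factor and full row rank of $D$ the right, leaving $A=0$.
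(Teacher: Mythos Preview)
Your proof is correct. Both arguments rest on the same core observation---that a word of length $d_r+d_s$ factors uniquely into a length-$d_r$ prefix and a length-$d_s$ suffix---but the organization differs. The paper proceeds in three stages: first the case $V=M_{d_r}$, $W=M_{d_s}$ (the full monomial vectors), then the case where $V$ and $W$ are bases of $H_{d_r}$ and $H_{d_s}$ via an invertible change of basis, and finally the general case by extending to bases and padding $A$ with zeros. You bypass this reduction by observing directly that the multiplication map $H_{d_r}\otimes H_{d_s}\to H_{d_r+d_s}$ is injective (equivalently, your matrix identity $C^{\top}AD=0$ with $C,D$ of full row rank), so linear independence of the $v_i^*$ and of the $w_j$ immediately gives linear independence of the products $v_i^*w_j$. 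Your route is shorter and avoids the case analysis; the paper's route is slightly more elementary in that it never invokes tensor products or rank arguments, only explicit coordinate changes.
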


\begin{proof}First consider the case where $V = M_{d_r}$ is a vector whose entries are all the words of degree $d_r$,
and where $W = M_{d_s}$ is a vector whose entries are all the words of degree $d_s$.  
Let $A \in \R^{r \times s}$ satisfy
$M_r(x)^* A M_s(x) = 0$.  Each word $w$ of degree $d_r + d_s$ can be  expressed uniquely as a product
\[w =w_r w_s, \]
where $w_r$ is the word consisting of the first $d_r$ letters of $w$ and
$w_s$ is the word consisting of the last $d_s$ letters of $w$.
Let $A_{w_r^*,w_s}$ be the entry of $A$ corresponding to $w_r^*$ on the left and $w_s$ on the right.
Then,
\[M_r(x)^* A M_s(x) = 
\sum_{\deg(w_r)= d_r} \sum_{\deg(w_s) = d_s} A_{w_r^*,w_s} w_rw_s = 0. \]
This implies that each $A_{w_r^*,w_s} = 0$, or in other words, that $A = 0$.

Next consider the case where the entries of $V$ span the set of all homogeneous degree $d_r$ polynomials, and where the entries of $W$ span the set of all homogeneous degree $d_s$ polynomials.  The elements of $V$ may be expressed uniquely as a linear combination of the elements of $M_r$.  Therefore there exists an invertible matrix $R$ such that
\[RV(x) = M_r(x). \]
Similarly, there exists an invertible matrix $S$ such that
\[SW(x) = M_s(x). \]
Therefore
\[ V(x)^* A W(x) = M_r^* (R^{-1})^* A S^{-1} M_s = 0 \]
which implies that $(R^{-1})^*AS^{-1} = 0$. Since $R$ and $S$ are invertible, it follows that $A = 0$.

Finally, consider the general case.  Let $V'$ be a vector whose entries, together with the entries of $V$, form a basis for the set of homogeneous degree $d_r$ polynomials, and let $W'$ be a vector whose entries, together with the entries of $W$, form a basis for the set of homogeneous degree $d_s$ polynomials. Suppose $V(x)^TAW = 0$.  Then
\[V^T A W = \begin{bmatrix} V\\V' \end{bmatrix}^T \left[ \begin{array}{cc}
A&0\\
0&0
\end{array}
\right]
\begin{bmatrix} W\\W' \end{bmatrix} =0,\]
which implies that $A = 0$ by previously established case.
\end{proof}

\begin{remark}
Let $M_{d_1} = (w_1^{(1)}, \ldots, w_k^{(1)})$ be a vector whose entries are
all words of length $d_1$ and
$M_{d_2}= (w_1^{(2)}, \ldots, w_{\ell}^{(2)})$ be a vector whose entries are
all words of length $d_2$.
Let $p$ be homogeneous of degree $d_1 + d_2$.
Then the unique matrix $A$ such that
\[ p = M_{d_1}^* A M_{d_2}\]
is the matrix whose $(i,j)$-entry is the coefficient
of $(w_i^{(1)})^*w_j^{(2)}$ in $p$. We call it the $(d_1,d_2)$-\df{Gram matrix} for $p$.
$($Observe that the uniqueness of the Gram matrix fails for nonhomogeneous polynomials.$)$
\end{remark}

The following is obvious and well-known.

\begin{lem}[cf.~\cite{Har+,KP}]\label{lem:homsos}
 Let $p \in \FA$ be a homogeneous degree $2d$ polynomial.
Then $p$ is a sum of squares if and only if its $(d,d)$-Gram matrix 
is positive semidefinite.
\end{lem}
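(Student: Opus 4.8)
The plan is to prove both directions of the equivalence by relating sums of squares of homogeneous polynomials to the positive semidefiniteness of the $(d,d)$-Gram matrix, using the uniqueness of the Gram matrix for homogeneous polynomials that was just established in the preceding remark.

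First I would address the easy direction: suppose the $(d,d)$-Gram matrix $A$ of $p$ is positive semidefinite. Then $A = B^TB$ for some real matrix $B$, say with rows $b_1^T, \ldots, b_N^T$. Writing $M_d$ for the vector of all words of length $d$, we get $p = M_d^* A M_d = M_d^* B^T B M_d = (BM_d)^*(BM_d) = \sum_i (b_i^T M_d)^*(b_i^T M_d) = \sum_i h_i^* h_i$, where $h_i = b_i^T M_d$ is a homogeneous polynomial of degree $d$. Hence $p$ is a sum of (hermitian) squares.

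For the converse, suppose $p = \sum_{i=1}^{N} h_i^* h_i$ for some polynomials $h_i\in\FA$. Since $p$ is homogeneous of degree $2d$ and the top-degree part of a sum of hermitian squares cannot cancel (the degree-$2d$ part of $\sum h_i^*h_i$ is $\sum (h_i^{\mathrm{top}})^*(h_i^{\mathrm{top}})$ where $h_i^{\mathrm{top}}$ is the degree-$d$ homogeneous component of $h_i$), and similarly the parts of degree below $2d$ must vanish, we may replace each $h_i$ by its degree-$d$ homogeneous component; so without loss of generality each $h_i$ is homogeneous of degree $d$. Write $h_i = b_i^T M_d$ for a coefficient vector $b_i$, and let $B$ be the matrix with rows $b_i^T$. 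Then $p = \sum_i (b_i^TM_d)^*(b_i^TM_d) = M_d^* B^T B M_d$. By uniqueness of the $(d,d)$-Gram matrix (the remark above), the Gram matrix of $p$ equals $B^TB$, which is positive semidefinite.

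I do not expect any serious obstacle here, since the lemma is flagged as "obvious and well-known"; the only point requiring a little care is the reduction in the converse to homogeneous $h_i$, which rests on the standard fact that highest- (and in fact, by induction downward, all lower-) degree terms in a sum of hermitian squares do not cancel. One could instead phrase this reduction by noting that if $p = \sum h_i^*h_i$ then comparing the degree-$2d$ Gram matrices on both sides already forces the conclusion, since contributions from components of $h_i$ of degree other than $d$ cannot land in degree $2d$ in a way that affects the Gram matrix of $p$ — but the cleanest exposition is simply to invoke non-cancellation of leading terms to pass to homogeneous $h_i$ outright, and then apply Lemma~\ref{lemma:goodtechnical}/the Gram matrix uniqueness.
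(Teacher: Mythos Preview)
Your proposal is correct and follows essentially the same approach as the paper's own (suppressed) proof: for the forward direction you factor the Gram matrix as $A=B^TB$ (the paper uses $\sqrt{A}$, which is the same idea), and for the converse you reduce to homogeneous $h_i$ of degree $d$ via non-cancellation of leading squares, then invoke uniqueness of the $(d,d)$-Gram matrix from Lemma~\ref{lemma:goodtechnical} to identify it as $B^TB\succeq 0$.
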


We call a polynomial $p\in\FA$ \df{irreducible} if it cannot 
be written as a product of two polynomials of smaller degree,
i.e., if $p=qr$ with $q,r\in\FA$, then $q$ is constant or $r$ is constant.

\begin{prop}[cf.~\protect{\cite{Co63}}]
\label{prop:factorize}
 Let $p \in \FNSx$ be a noncommutative homogeneous polynomial.
Then $p$ can be factored as
\[ p = p_1 \cdots p_k\]
for some irreducible homogeneous polynomials
$p_1, \ldots, p_k$.
Further, such a representation is unique in that
if $p = q_1 \cdots q_{\ell}$ is another
decomposition, where each $q_j$ is irreducible,
then $k= \ell$  and there exist nonzero scalars
$\lambda_1, \ldots, \lambda_k$ such that
$q_i = \lambda_i p_i$ for $i=1,\ldots,k$.
\end{prop}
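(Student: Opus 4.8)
The plan is to prove the existence of a factorization into irreducible homogeneous pieces by induction on $\deg(p)$, and then to prove uniqueness by an argument on the leading structure of the factors, using Lemma \ref{lemma:goodtechnical} to control how the two putative factorizations relate.

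\textbf{Existence.} First I would observe that a product of homogeneous polynomials is homogeneous, and that $\deg(qr)=\deg(q)+\deg(r)$ for nonzero $q,r$ (the free $*$-algebra is a domain, or more simply: the top-degree part of $qr$ is the product of the top-degree parts of $q$ and $r$, which is nonzero since the free algebra has no zero divisors). Given a nonzero homogeneous $p$, if $p$ is irreducible we are done. Otherwise $p=qr$ with $\deg(q),\deg(r)<\deg(p)$; since $\deg(q)+\deg(r)=\deg(p)$ and both are positive, each of $q,r$ has strictly smaller degree, and moreover each is homogeneous (its lower-degree parts would force lower-degree parts in $p$, which has none). By induction each of $q$ and $r$ factors into irreducible homogeneous polynomials, and concatenating gives such a factorization of $p$. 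Constants cause no trouble: a degree-$0$ polynomial is a unit and can be absorbed, so we may assume all irreducible factors have positive degree.

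\textbf{Uniqueness.} Suppose $p=p_1\cdots p_k=q_1\cdots q_\ell$ with all $p_i,q_j$ irreducible homogeneous of positive degree. I would argue by induction on $k$ (or on $\deg p$) that $k=\ell$ and $q_i=\lambda_i p_i$ for scalars $\lambda_i\neq 0$. The crux is to show $q_1$ and $p_1$ are scalar multiples of one another. Assume without loss of generality $\deg(p_1)\le\deg(q_1)=:d$. Write $p_1=M_{e}^*\alpha$ where $e=\deg(p_1)$ and $\alpha$ is the coordinate (column) vector of $p_1$ in the basis $M_e$ of words of length $e$; similarly express everything through Gram/coordinate data. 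The key idea: the ``left-hand coefficient data'' of $p$ at degree $d$, i.e. the span of all words of length $d$ that occur as left prefixes with nonzero coefficient, is determined on one hand by $q_1$ (since $q_1$ is the leftmost factor and contributes exactly its length-$d$ words) and on the other by $p_1$ together with a length-$(d-e)$ piece of $p_2\cdots p_k$. More precisely, writing $p = p_1 \cdot (p_2\cdots p_k)$ and comparing with $p = q_1\cdot(q_2\cdots q_\ell)$, I would use Lemma \ref{lemma:goodtechnical} (with $V$ the prefix-words of length $d$ and $W$ the suffix-words of the complementary length) to deduce that the coefficient pattern forces $q_1$ to be a right multiple of $p_1$: $q_1=p_1\cdot h$ for some homogeneous $h$ of degree $d-e$. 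Since $q_1$ is irreducible, either $h$ is constant — giving $q_1=\lambda p_1$ and hence $e=d$ — or $p_1$ is constant, which is excluded. Thus $q_1=\lambda_1 p_1$ with $\lambda_1\neq0$, and then $p_1(p_2\cdots p_k)=p_1(\lambda_1 q_2\cdots q_\ell)$; cancelling $p_1$ on the left (valid since the free algebra is a domain: if $p_1 u=0$ with $p_1\neq0$ then $u=0$) gives $p_2\cdots p_k=\lambda_1 q_2\cdots q_\ell$, to which the induction hypothesis applies, yielding $k=\ell$ and the remaining scalar relations.

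\textbf{Main obstacle.} The genuinely delicate step is the cancellation/prefix argument showing $q_1=p_1 h$, i.e. that one leftmost irreducible factor divides the other on the left. The cleanest route is to reduce to the statement that $\FA$ has no zero divisors and satisfies a left Ore-type cancellation for this situation; equivalently, since everything is homogeneous, to work with the associated graded structure and the unique-prefix decomposition of words ($w=w_rw_s$) that already appears in the proof of Lemma \ref{lemma:goodtechnical}. Indeed the right framing is: the free algebra is a free ideal ring (this is Cohn's theorem, cited as \cite{Co63}), so it is a domain in which homogeneous elements admit unique factorization into atoms up to units; I would invoke this to make the divisibility step rigorous, and the homogeneity together with Lemma \ref{lemma:goodtechnical} is what pins the units down to scalars. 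So the proof is essentially: existence by degree induction, and uniqueness by reducing the left-factor comparison to the domain/fir property of $\FA$ and then inducting.
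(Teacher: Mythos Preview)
The paper does not actually prove this proposition: it is stated with a citation to Cohn \cite{Co63} and no proof is given. So there is nothing in the paper to compare your argument to.

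Your existence argument is fine. For uniqueness, you correctly identify that the crux is showing that $p_1$ left-divides $q_1$ when $\deg p_1\le\deg q_1$. You first gesture at Lemma~\ref{lemma:goodtechnical} for this, but then back off in your ``Main obstacle'' paragraph and propose to invoke Cohn's fir theory --- which is circular, since that is precisely the result being cited. In fact the divisibility step can be carried out directly with the Gram-matrix machinery already in the paper. Write $e=\deg p_1$, $d=\deg q_1$, $N=\deg p$, let $\alpha$ be the coefficient vector of $p_1$ in the basis of length-$e$ words, and let $\gamma,\tau$ be the coefficient vectors of $q_1,t$ in the bases of length-$d$ and length-$(N-d)$ words. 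Since $p=p_1s$, the $(e,N-e)$-Gram matrix $A$ has rank one with column space $\R\alpha$. Computing the same $A$ from $p=q_1t$: for a length-$(N-e)$ word $v=u''v'$ (with $|u''|=d-e$, $|v'|=N-d$), the column $A_{\cdot,v}$ equals $\tau_{v'}\cdot(\gamma_{u'u''})_{u'}$. Choosing $v'$ with $\tau_{v'}\ne 0$, this forces $(\gamma_{u'u''})_{u'}=c_{u''}\alpha$ for some scalar $c_{u''}$, for every $u''$. Hence
\[
q_1=\sum_{u',u''}\gamma_{u'u''}\,u'u''=\Big(\sum_{u'}\alpha_{u'}u'\Big)\Big(\sum_{u''}c_{u''}u''\Big)=p_1\cdot h,
\]
and irreducibility of $q_1$ gives $d=e$ and $q_1=\lambda_1 p_1$; now cancel $p_1$ (the free algebra is a domain) and induct. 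So your outline is sound, but the step you flag as delicate should be filled in directly rather than by appealing to \cite{Co63}.
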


We refer the reader to \cite{Co06} for a detailed study of factorization
in free algebras, and to
\cite{GGRW} for another take on noncommutative factorization.

\smallskip
It is straightforward to compute a
factorization of a homogeneous noncommutative polynomial.
There is an algorithm described in \cite{c}; see also \cite{KaS93}.
An alternate way of looking at the algorithm is presented
in Algorithm \ref{algor:factor} below. 

\begin{lem}\label{lem:alg}
A homogeneous polynomial $p\in\FA$
can be factored as $p = p_1p_2$, for
$\deg(p_1) = d_1$ and $\deg(p_2) = d_2$
if and only if the $(d_1,d_2)$-Gram matrix $A$ 
for $p$ has rank $1$.
Indeed, a decomposition $p = p_1p_2$ is then given by
decomposing $A = \alpha_1 \alpha_2^*$
and setting $p_1 = M_{d_1}^* \alpha_1$ and
$p_2 = \alpha_2^*M_{d_2}$.
\end{lem}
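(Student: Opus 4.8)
The plan is to prove the two directions of the equivalence in Lemma~\ref{lem:alg} by exploiting the uniqueness of the Gram matrix for homogeneous polynomials established in the preceding remark, together with Lemma~\ref{lemma:goodtechnical}.

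\textbf{The ``only if'' direction.} Suppose $p = p_1p_2$ with $\deg(p_1) = d_1$ and $\deg(p_2) = d_2$, so in particular $p$ is homogeneous of degree $d_1+d_2$ and (after discarding lower-degree terms, which cannot contribute to a homogeneous product) we may take $p_1$ homogeneous of degree $d_1$ and $p_2$ homogeneous of degree $d_2$. Write $p_1 = M_{d_1}^* \alpha_1$ and $p_2 = \alpha_2^* M_{d_2}$ for scalar vectors $\alpha_1, \alpha_2$ of the appropriate sizes; this is possible since $M_{d_1}$ (resp.\ $M_{d_2}$) lists all words of length $d_1$ (resp.\ $d_2$). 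Then
\[
p = p_1 p_2 = (M_{d_1}^*\alpha_1)(\alpha_2^* M_{d_2}) = M_{d_1}^* (\alpha_1 \alpha_2^*) M_{d_2},
\]
and by the uniqueness of the $(d_1,d_2)$-Gram matrix, $A = \alpha_1\alpha_2^*$, which has rank at most $1$. If $A = 0$ then $p = 0$, contradicting that $p$ factors into polynomials of smaller (hence nonnegative) degree with $p \neq 0$; so $\rank A = 1$. This same computation shows that the recipe given in the statement does recover a valid factorization.

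\textbf{The ``if'' direction.} Conversely, suppose the $(d_1,d_2)$-Gram matrix $A$ of $p$ has rank $1$, so $A = \alpha_1\alpha_2^*$ for nonzero scalar vectors $\alpha_1,\alpha_2$. Set $p_1 = M_{d_1}^*\alpha_1$ and $p_2 = \alpha_2^* M_{d_2}$; these are homogeneous of degrees $d_1$ and $d_2$ respectively, and both nonzero since $\alpha_1,\alpha_2\neq 0$ and the entries of $M_{d_1}$, $M_{d_2}$ are linearly independent words. Then $M_{d_1}^* A M_{d_2} = M_{d_1}^*(\alpha_1\alpha_2^*)M_{d_2} = p_1 p_2$, while by definition of the Gram matrix $M_{d_1}^* A M_{d_2} = p$; hence $p = p_1p_2$ as desired.

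\textbf{Main obstacle.} There is no serious obstacle: the lemma is essentially a restatement of the rank-one factorization of a matrix combined with the dictionary between homogeneous polynomials and their Gram matrices. The one point requiring a little care is the reduction, in the ``only if'' direction, to the case where $p_1$ and $p_2$ are themselves homogeneous --- this uses that the top-degree part of a product is the product of the top-degree parts, and that $p$ being homogeneous of degree $d_1 + d_2$ forces all lower-degree contributions to cancel, so one may as well replace $p_1, p_2$ by their leading homogeneous components. Everything else is bookkeeping with the identifications $p_i = M_{d_i}^*\alpha_i$ and an appeal to uniqueness of the Gram matrix, which in turn rests on Lemma~\ref{lemma:goodtechnical}.
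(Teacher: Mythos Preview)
Your proof is correct and is precisely the argument the paper has in mind; the paper's own proof consists of the single word ``Straightforward.'' You have simply spelled out the details, including the small reduction (via leading parts) to the case where the factors $p_1,p_2$ are themselves homogeneous, which the paper takes for granted in its surrounding context.
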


\begin{proof}
Straightforward.
\end{proof}

\begin{algor}
\label{algor:factor}
Suppose $p\in\FA$ is  a homogeneous degree $d$ polynomial.
If its $(d_1,d_2)$-Gram matrix is of rank $>1$ for all $d_1,d_2\in\N$ with
$d_1+d_2=d$, then $p$ is irreducible.
Otherwise choose the smallest possible $d_1$ producing a factorization
$p=p_1 \tilde p_1$ as in Lemma {\rm\ref{lem:alg}}. Then $p_1$ is irreducible.
Repeating the procedure on $\tilde p_1$ yields a factorization $\tilde p_1=p_2\cdots
p_k$, where each $p_j$ is irreducible. 
Then $p=p_1p_2\cdots p_k$.
\end{algor}

\subsection{Homogeneous Principal Left Ideals}

There is a clean test for determining whether or not a principal left ideal is real.
This test does not require homogeneity.

\begin{prop}
\label{prop:degBoundsPI}
Let $I$ be the left ideal generated by a $($not necessarily homogeneous$)$ polynomial $p\in\FA$. Then $I$ is real if and
only
if there exists no nonzero sum of squares equal to 
$qp + p^*q^*$ for a polynomial $q\in\FA$ with $\deg(q) < \deg(p)$.
\end{prop}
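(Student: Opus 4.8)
The plan is to apply the quantitative real Nullstellensatz (Theorem~\ref{thm:chmnDeg}) to the principal ideal $I = \FA p$ and then perform a degree-reduction on the polynomials appearing in the sum of squares. The ``only if'' direction is essentially a restatement: if $I$ is real and $qp + p^*q^* = \sum_i r_i^* r_i$ is a nonzero sum of squares with $\deg(q) < \deg(p)$, then since $qp \in I$ and $p^*q^* \in I^*$, we have $\sum_i r_i^* r_i \in I + I^*$, so $r_i \in I$ for all $i$ by realness; but each $r_i \in I = \FA p$ has degree $\geq \deg(p)$ unless $r_i = 0$, and comparing top-degree terms forces the sum of squares to be zero, a contradiction. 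So I would dispatch that direction first in a sentence or two.

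For the ``if'' direction, suppose $I$ is not real. By Theorem~\ref{thm:chmnDeg} with $d = \deg(p)$, there exist $q_1, \ldots, q_k \in \FA$ with $\deg(q_j) < \deg(p)$ such that $\sum_j q_j^* q_j \in I + I^*$ but some $q_j \notin I$. Write $\sum_j q_j^* q_j = ap + p^* b^*$ for some $a, b \in \FA$ (using that an element of $I + I^*$ is a left multiple of $p$ plus a right multiple of $p^*$). Taking adjoints of the left-hand side, which is symmetric, and averaging, we may assume $b = a$, i.e.\ $\sum_j q_j^* q_j = ap + p^* a^*$. Since $\sum_j q_j^* q_j$ is a nonzero sum of squares (it is nonzero because some $q_j \notin I$, hence $q_j \neq 0$), this already looks like what we want, \emph{except} that we have no a priori control on $\deg(a)$.

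The main obstacle, and the real content of the proof, is therefore reducing $\deg(a)$ to be less than $\deg(p)$. The idea is a top-degree cancellation argument: let $n = \deg(ap + p^* a^*) = \deg\big(\sum_j q_j^* q_j\big) = 2\max_j \deg(q_j) < 2\deg(p)$, so $n < 2\deg(p)$. If $\deg(a) \geq \deg(p)$, then $\deg(ap) = \deg(a) + \deg(p) \geq 2\deg(p) > n$, so the top-degree part of $ap$ must be cancelled by the top-degree part of $p^* a^*$; writing $a = a_{\mathrm{hi}} + (\text{lower})$ with $a_{\mathrm{hi}}$ the homogeneous top part, this forces $a_{\mathrm{hi}} p + p^* a_{\mathrm{hi}}^* = 0$, i.e.\ $a_{\mathrm{hi}} p = -(a_{\mathrm{hi}}p)^*$ is skew-symmetric. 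I would then argue that replacing $a$ by $a - a_{\mathrm{hi}}$ does not change $ap + p^* a^*$, and repeat, peeling off homogeneous pieces of degree $\geq \deg(p)$ one at a time until $\deg(a) < \deg(p)$; the sum of squares on the left is untouched throughout, so the final $a$ witnesses the failure of the condition. One technical point to handle carefully is that $ap$ and $p^*a^*$ could each have degree exceeding $n$ at several top degrees simultaneously, so the peeling should be organized by descending degree, at each stage using that the degree-$(\deg(a)+\deg(p))$ homogeneous part of $ap + p^*a^*$ vanishes and hence (by the uniqueness/linear-independence bookkeeping behind Lemma~\ref{lemma:goodtechnical}, or simply by comparing analytic vs.\ anti-analytic parts) the corresponding homogeneous part of $a$ contributes nothing net. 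This degree-bookkeeping step is where I expect to spend the most care.
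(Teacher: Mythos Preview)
Your ``only if'' direction and the opening of the ``if'' direction (applying Theorem~\ref{thm:chmnDeg} and symmetrizing $ap + p^*b^*$ to $ap + p^*a^*$) are fine. The gap is in the degree-reduction peeling. From $\deg(ap + p^*a^*) < 2d$ and $\deg(a) = m \ge d$ you correctly get that the degree-$(m+d)$ homogeneous part vanishes, but that part is $a_{\mathrm{hi}}\,p_{\mathrm{hi}} + p_{\mathrm{hi}}^*\,a_{\mathrm{hi}}^*$, where $p_{\mathrm{hi}}$ is the leading homogeneous part of $p$. It does \emph{not} follow that $a_{\mathrm{hi}}\,p + p^*\,a_{\mathrm{hi}}^* = 0$ when $p$ is not homogeneous, so replacing $a$ by $a - a_{\mathrm{hi}}$ genuinely changes $ap + p^*a^*$. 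A concrete obstruction: if $p_{\mathrm{hi}} = (x-x^*)^2$ then $a_{\mathrm{hi}} = (x-x^*)^3$ satisfies $a_{\mathrm{hi}}\,p_{\mathrm{hi}} + p_{\mathrm{hi}}^*\,a_{\mathrm{hi}}^* = (x-x^*)^5 + \big((x-x^*)^5\big)^* = 0$, yet $a_{\mathrm{hi}}\,p + p^*\,a_{\mathrm{hi}}^*$ is nonzero as soon as $p$ has any lower-order terms. The tools you gesture at (Lemma~\ref{lemma:goodtechnical}, analytic vs.\ anti-analytic splitting) do not repair this; what you would actually need is to show that whenever $a_{\mathrm{hi}}\,p_{\mathrm{hi}}$ is skew there exists $c$ with top part $a_{\mathrm{hi}}$ and $cp$ skew, and that lifting statement is not obvious.

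The paper does not attempt the peeling at all. It invokes \cite[Proposition~2.18]{CHMN}, which says $(I+I^*)_{2d-1} = I_{2d-1} + I_{2d-1}^*$; since $I_{2d-1} = \{qp : \deg(q) < d\}$ for a principal $I$, every symmetric element of $(I+I^*)$ of degree $< 2d$ is automatically of the form $qp + p^*q^*$ with $\deg(q) < d$. Your sum of squares $\sum_j q_j^*q_j$ has degree $< 2d$, so this finishes the argument immediately. In other words, the missing ingredient in your proof is exactly that truncation lemma for $I+I^*$, and it is the nontrivial content here.
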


\begin{proof}
The left ideal $I$ is equal to
$$ I = \{ qp\mid q \in \R\axs\}.$$
A polynomial $qp$ has $\deg(qp) = \deg(q) + \deg(p)$, which implies that
$I$ contains no nonzero polynomials of degree less than $\deg(p)$.

By Theorem \ref{thm:chmnDeg},  $I$ is real if and only if there exists
no sum of squares of the form
\begin{equation}
\label{eq:sosOfForm}
 q_1^*q_1 + \cdots + q_k^*q_k \in I + I^*,
\end{equation}
with $\deg(q_j) < \deg(p)$ for each $j$, and $q_j \not\in I$.
Since $I$ is principal, as shown above there are no nonzero $q_j \in I$
with $\deg(q_j) < \deg(p)$.
Further, a sum of squares of the form (\ref{eq:sosOfForm}) has degree equal to
$2\max\{ \deg(q_j) \}$.
Therefore $I$ is real if and only if there exists no nonzero sum of squares
in $I + I^*$
with degree less than $2\deg(p)$.

\cite[Proposition 2.18]{CHMN} implies
that $(I + I^*)_{2d-1} = I_{2d-1} + I_{2d-1}^*$. The set $I_{2d-1}$ is equal to
\[ I_{2d-1} = \{ qp\mid \deg(q) < \deg(p)\}\]
since $\deg(qp) = \deg(q) + \deg(p)$.
Therefore an element of $(I + I^*)_{2d-1}$ is of the form
$q_1p + p^*q_2^*$, with $\deg(q_1), \deg(q_2) < \deg(p)$.  Further, if $q_1p +
p^*q_2^*$ is symmetric, then
$$ q_1p + p^*q_2^* = \frac{1}{2}(q_1 p + p^*q_2) + \frac{1}{2}(q_1p+p^*q_2)^*=
\left(\frac{1}{2}(q_1 + q_2) \right)p + p^* \left(\frac{1}{2}(q_1 + q_2)
\right)^*.$$
Therefore the set of symmetric elements of $(I + I^*)_{2d-1}$ is
$\{qp + p^*q^*\mid \deg(q) < \deg(p) \}$.
Hence $I$ is real if and only if there exists no nonzero sum of squares of
the form $qp + p^*q^*$ with $\deg(q) < \deg(p)$.
\end{proof}

\begin{example}
 Suppose $I \subset \FA$ is the left ideal generated by
a homogeneous polynomial $p$, and $p$ has no terms containing $x_ix_i^*$ or
$x_i^*x_i$ for any $i$.  Then $I$ is real.
\end{example}

\begin{proof}
By Proposition \ref{prop:degBoundsPI}, it suffices to assume that there exists a
polynomial $q$ such that $qp + p^*q^*$ is a nonzero sum of squares, with
$\deg(q) <
\deg(p)$.
Then $\deg(qp + p^*q^*) \leq \deg(q) + \deg(p) < 2\deg(p)$.
A nonzero sum of squares $s$ necessarily contains some terms of the form $w^*w$,
where
$2\deg(w) = \deg(s)$.  Such a term contains a $x_ix_i^*$ or a $x_i^*x_i$ at
the $\deg(s)/2$ through $\deg(s)/2 + 1$ position.  However, no term of $qp + p^*q^*$
contains  such a hermitian square  since $p$ contains no such terms and $2\deg(p) > \deg(qp
+ p^*q^*)$.
\end{proof}

\begin{lemma}
 \label{lem:factorSOS}
Let $p, q$ be homogeneous polynomials with
$\deg(q) \leq  \deg(p)$.  Then $qp + p^*q^*$ is a sum of squares
if and only if $p = rq^*$ for some polynomial $r$ such that
$r + r^*$ is a sum of squares.
\end{lemma}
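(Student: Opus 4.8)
\textbf{Proof plan for Lemma \ref{lem:factorSOS}.}
The plan is to exploit the uniqueness of Gram matrices for homogeneous polynomials, just as in Lemma \ref{lem:homsos} and Lemma \ref{lem:alg}. Let $d=\deg(p)$ and $e=\deg(q)$, so $qp$ is homogeneous of degree $d+e$ and $qp+p^*q^*$ is homogeneous of the same degree. If $d+e$ is odd, then a nonzero homogeneous polynomial of odd degree can never be a sum of squares (the top-degree part of a sum of squares has even degree), and likewise the factorization condition $p=rq^*$ forces $\deg r = d-e$, which combined with parity considerations I will check rules the statement trivially true on both sides; so the substance is in the even case $d+e=2k$. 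In that case write $qp+p^*q^*=M_k^*GM_k$ for the $(k,k)$-Gram matrix $G$, which is symmetric, and by Lemma \ref{lem:homsos} the left-hand side is a sum of squares if and only if $G\succeq 0$.

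The key computation is to express $G$ in terms of the Gram data of $p$ and $q$. First I would handle the case $e=k$ (equivalently $d=k$, so $\deg p = \deg q$): then $qp = qp$ has a $(k,k)$-Gram matrix, but more to the point $q=\beta^*M_k$ and $p=\alpha^*M_k$ for scalar vectors $\alpha,\beta$, wait — that is only the degree-$k$ piece; since $p,q$ are homogeneous of degree $k$ here, genuinely $p=M_k^*a$ is not right either, rather $p=a^*M_k$... let me instead do the general reduction. Write $p=M_d^* P\, ?$; cleaner: since $q$ is homogeneous of degree $e\le d=k$ is the easy subcase, let me just treat $\deg q=\deg p=k$ first, where $qp$ has degree $2k$; writing $q = M_k^* c$ and $p = d^* M_k$ with scalar vectors is the wrong shape. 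The honest approach: use Lemma \ref{lemma:goodtechnical}. Write $p = M_{k-e}^*\, B\, M_k$ hmm, degrees don't match unless... Let me restart the bookkeeping in the writeup: set $d=\deg p$, $e=\deg q$, assume $d+e$ even. The reverse direction is easy: if $p=rq^*$ with $r+r^*$ a sum of squares, then $qp+p^*q^* = q r q^* + q r^* q^* = q(r+r^*)q^*$, and conjugating a sum of squares $r+r^*=\sum h_i^*h_i$ by $q$ gives $\sum (h_iq^*)^*(h_iq^*)$, a sum of squares. (One must note $\deg(qp)=e+d=e+\deg r+e$ so $\deg r = d-e\ge 0$, consistent with $\deg q\le\deg p$.)

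For the forward direction, the main obstacle — and the part I expect to require real work — is extracting the factorization $p=rq^*$ from the mere positive semidefiniteness of $G$. The idea is: consider the span $\mathcal W$ of the entries of $M_e$ (all words of length $e$) and note $q^*$ "lives" in degree $e$ on the right of $qp+p^*q^*$ in a structured way. More precisely, I would write $q = \gamma^* M_e$ for a scalar row vector $\gamma$ (viewing $q$ as a homogeneous degree-$e$ polynomial), and $p = M_{d-e}^* P M_e$ for the $(d-e,e)$-Gram matrix $P$ of $p$ if $d\ge e$ — wait, that decomposes $p$ at the split point $d-e$; then $qp = \gamma^* M_e M_{d-e}^* P M_e$ which is not in a clean Gram form because of the $M_eM_{d-e}^*$ in the middle. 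The correct move is instead to split $p$ as $p = M_{d-e}^*\,(\text{stuff})$ no — I want $q$ on the \emph{right}. So write $p = M_d^* \pi$ wait $p$ has degree $d$ so $p = (\text{row of length-}d\text{ words})\cdot(\text{coeff vector})$; to see $q^*$ as a right factor I instead expand $qp + p^*q^*$ via its $(k,k)$-Gram matrix directly and argue: the rows/columns of $G$ are indexed by length-$k$ words; the term $p^* q^*$ contributes, for each length-$k$ word $w$ appearing with the split $w = w' w''$ where $|w''|=e$, coefficients that force, when $G\succeq 0$ and $G$ has the block structure coming from $qp$ being a "rank-one-in-$q$" object, the range of $G$ to be contained in the span of $\{$length-$k$ words ending in a word from the support of $q^*\}$. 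I would make this precise by choosing a basis adapted to $q$: extend $q^*/\|q\|$ (as a unit vector of degree-$e$ coefficients) to an orthonormal basis $q^*=u_1,u_2,\dots$ of the degree-$e$ polynomials, correspondingly block $M_k$, and compute that $qp+p^*q^*$ only involves the first block, i.e. $G = \begin{bmatrix} G_{11} & G_{12}\\ G_{12}^* & 0\end{bmatrix}$ up to the adapted basis; then $G\succeq 0$ forces $G_{12}=0$, and $G_{11}$ itself has the form $C + C^*$ where $C$ is the Gram matrix of $r$ for the factorization $p = rq^*$ read off from $G_{11}$. Extracting $r$ and verifying $r+r^*$ is a sum of squares (again by Lemma \ref{lem:homsos}, since $r+r^*$ has Gram matrix $G_{11}\succeq 0$) then closes the argument. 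The delicate bookkeeping is precisely this adapted-basis block decomposition and checking that $p = rq^*$ holds on the nose (not merely up to lower-degree terms), for which homogeneity and Lemma \ref{lemma:goodtechnical} are exactly what is needed.
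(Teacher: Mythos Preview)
Your approach is essentially the paper's: both choose a basis for the homogeneous degree-$k$ polynomials whose first block is $\{w\,q^*: w\in\langle x,x^*\rangle,\ |w|=k-\deg q\}$, observe that the Gram matrix of $qp$ has zero rows outside this block so that the Gram matrix of $qp+p^*q^*$ takes the shape $\begin{bmatrix} G_{11} & B\\ B^* & 0\end{bmatrix}$, use positive semidefiniteness to force $B=0$, and then read off $r$. One small clarification worth tightening in your write-up: the polynomial $r$ (and hence the factorization $p=rq^*$) is read off from the Gram matrix $A$ of $qp$ in this adapted basis, with $G_{11}=A+A^*$; it cannot be recovered from $G_{11}$ alone, since $A$ is determined only up to a skew-symmetric perturbation by $A+A^*$.
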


\begin{proof}

Suppose $\deg(qp +  p^*q^*) = 2d $ so that $d \leq \deg(p)$.
Let $w_1q^*, \ldots, w_kq^*, r_1, \ldots, r_{\ell}$
be a basis for 
homogeneous polynomials of degree $d$
so that $w_1, \ldots, w_k$ are all
words of length $d - \deg(q)$.
Then there exists a unique decomposition of $qp$ as
\[qp =  \begin{bmatrix}
    w_1 q^*\\
\vdots\\
w_k q^*\\
r_1\\
\vdots\\
r_{\ell}
   \end{bmatrix}^*
\left[
\begin{array}{cc}
 A&B\\
0&0
\end{array}
\right]
\begin{bmatrix}
    w_1 q^*\\
\vdots\\
w_k q^*\\
r_1\\
\vdots\\
r_{\ell}
   \end{bmatrix}
\]
for some block matrices $A$ and $B$.
Further,
\[qp + p^*q^* = \begin{bmatrix}
    w_1 q^*\\
\vdots\\
w_k q^*\\
r_1\\
\vdots\\
r_{\ell}
   \end{bmatrix}^*
\left[
\begin{array}{cc}
 A+A^*&B\\
B^*&0
\end{array}
\right]
\begin{bmatrix}
    w_1 q^*\\
\vdots\\
w_k q^*\\
r_1\\
\vdots\\
r_{\ell}
   \end{bmatrix}. \]
Since $qp + p^*q^*$ is a sum of squares,
and by uniqueness of the Gram matrix representation,
we have $B = 0$.
Therefore
\[ qp =
\begin{bmatrix}
   w_1 q^*\\
\vdots\\
w_k q^*\\
r_1\\
\vdots\\
r_{\ell}
   \end{bmatrix}^*
\left[
\begin{array}{cc}
 A&0\\
0&0
\end{array}
\right]
\begin{bmatrix}
    w_1 q^*\\
\vdots\\
w_k q^*\\
r_1\\
\vdots\\
r_{\ell}
   \end{bmatrix} =
q \begin{bmatrix}
   w_1\\
\vdots\\
w_k
   \end{bmatrix}^*
A
\begin{bmatrix}
    w_1 \\
\vdots\\
w_k
   \end{bmatrix}q^*\]
Hence
\[ r =  \begin{bmatrix}
   w_1\\
\vdots\\
w_k
   \end{bmatrix}^*
A
\begin{bmatrix}
    w_1 \\
\vdots\\
w_k
   \end{bmatrix}\]
gives $p = rq^*$ and $r + r^*$ is a sum of squares.
The converse implication is trivial.
\end{proof}

\begin{cor}
\label{cor:testByDecomp}
 Let $I \subset \FNSx$ be the left ideal
generated by a single homogeneous
polynomial $p$.  Let $p = p_1 \cdots p_k$
be a factorization of $p$ into irreducible
factors.
Then $I$ is real if and only if none of the $2k$ polynomials
\[ \pm (p_1 + p_1^*), \ \pm (p_1p_2 + p_2^*p_1^*),
\  \ldots,  \ \pm (p_1 \cdots p_k + p_k^* \cdots p_1^*)\]
is a nonzero sum of squares.
\end{cor}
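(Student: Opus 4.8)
The plan is to combine Proposition \ref{prop:degBoundsPI} with Lemma \ref{lem:factorSOS} and the uniqueness of factorization from Proposition \ref{prop:factorize}. By Proposition \ref{prop:degBoundsPI}, $I$ is real if and only if there is no polynomial $q$ with $\deg(q) < \deg(p)$ such that $qp + p^*q^*$ is a nonzero sum of squares. Since $p$ is homogeneous, I would first argue that we may take $q$ homogeneous as well: in $qp + p^*q^*$ the top-degree part is itself of the required form $\tilde q p + p^*\tilde q^*$ and, being the leading part of a sum of squares, is again a sum of squares (cf.\ the argument in Lemma \ref{lem:homsos}), so a nonzero witness exists iff a homogeneous one does. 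Then Lemma \ref{lem:factorSOS} applies and says $qp + p^*q^*$ is a (nonzero) sum of squares exactly when $p = rq^*$ for some homogeneous $r$ with $r + r^*$ a sum of squares; note $r + r^*$ is nonzero precisely when $qp + p^*q^*$ is.

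The second step is to identify, for each homogeneous $q$ with $\deg(q) < \deg(p)$, all factorizations $p = rq^*$. Here I would invoke Proposition \ref{prop:factorize}: fixing the irreducible factorization $p = p_1 \cdots p_k$, any factorization $p = rq^*$ with $q^*$ of degree $e := \deg(p) - \deg(r) \ge 1$ must, after regrouping and rescaling, have $q^* = \lambda\, p_{j+1} \cdots p_k$ and $r = \lambda^{-1} p_1 \cdots p_j$ for some $0 \le j < k$ (the constraint $\deg(q) < \deg(p)$, i.e.\ $\deg(r) \ge 1$, forces $j \ge 1$). Absorbing the scalar into $q$, the possible values of $r + r^*$ up to a positive scalar are exactly $\pm(p_1\cdots p_j + p_j^* \cdots p_1^*)$ for $j = 1, \ldots, k$, and whether such an expression is a sum of squares is unaffected by multiplying $r$ by a nonzero real scalar (a negative scalar is what produces the $\pm$). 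Assembling: $I$ fails to be real iff some $\pm(p_1\cdots p_j + p_j^*\cdots p_1^*)$ is a nonzero sum of squares, which is the displayed list of $2k$ polynomials.

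The main technical point to get right is the reduction to homogeneous $q$ and the bookkeeping in matching degrees: one must check that the degree bound $\deg(q) < \deg(p)$ in Proposition \ref{prop:degBoundsPI} corresponds precisely to excluding the trivial factorization $j = 0$ (where $r$ is a constant and $r + r^*$ is a nonzero constant, hence a ``sum of squares'' in the degenerate sense) — this is exactly why the list runs $j = 1, \ldots, k$ and not $j = 0, \ldots, k$, so no spurious obstruction is introduced. A minor point is to confirm that ``nonzero sum of squares'' is preserved in both directions of Lemma \ref{lem:factorSOS}, but that is immediate since $r + r^* = 0 \iff qp + p^*q^* = 0$. Everything else is routine, so I would mark the proof as essentially a direct assembly of the three cited results.
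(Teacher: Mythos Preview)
Your proposal is correct and follows essentially the same route as the paper: invoke Proposition~\ref{prop:degBoundsPI}, reduce to homogeneous $q$, then apply Lemma~\ref{lem:factorSOS} together with the unique factorization of Proposition~\ref{prop:factorize} to pin down $r$ (and hence $q$) up to scalar as an initial segment $p_1\cdots p_j$. The only place where the paper is slightly more explicit is the reduction to homogeneous $q$: if the leading part $q'p + p^*(q')^*$ happens to vanish, one must replace $q$ by $q-q'$ and iterate rather than immediately obtaining a homogeneous witness---you flagged this step as the main technical point, and that is exactly where the extra care is needed.
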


For the sake of convenience, we call the highest degree homogeneous
part of a polynomial $p\in\FA$ its \df{leading polynomial}. 

\begin{proof}
 By Proposition \ref{prop:degBoundsPI},
$I$ is real if and only if
 there is no nonzero sum of squares of the form
$qp + p^*q^*$,
where $\deg(q) < \deg(p)$.
We claim that we can restrict without loss of generality to $q$ being homogeneous,
i.e., $q = q'$.
Indeed,
if $q'$ is the leading polynomial of $q$, then
either $q'p + p^*(q')^* = 0$ or
$q'p + p^*(q')^*$ is the leading polynomial
of $qp + p^*q^*$.
In the former case,
\[ qp + p^*q^* = (q-q')p + p^*(q-q')^*,\]
so we can look at the leading polynomial of
$q-q'$ and repeat.
In the latter case,
$q'p + p^*(q')^*$ is a sum of squares since $qp + p^*q^*$ is.

If $s=(-1)^t(p_1p_2\cdots p_j + p_j^*\cdots p_2^*p_1^*)$ is a nonzero sum of squares for some
$j < k$ and some $t$, then let $q = (-1)^t p_k^*p_{k-1}^*\cdots p_{j+1}^*$. 
We have $\deg(q) < \deg(p)$ and
\[ q p + p^*q^* = p_k^*p_{k-1}^*\cdots p_{j+1}^* s p_{j+1}\cdots p_{k-1}p_k\]
is a nonzero sum of squares in $I+I^*$. If $s=(-1)^t(p+p^*)$ is a nonzero sum of squares
for some $t$, then $q=(-1)^t$ works.

Conversely, suppose $qp +  p^*q^* $  is homogeneous
of degree $2d$, for some $d < \deg(p)$, and assume it is a nonzero sum of
squares.  By Lemma \ref{lem:factorSOS}, $p = rq^*$ for some $r$ such that $r +
r^*$ is a sum of squares.  
By the uniqueness of the factorization,
\[ r = \lambda p_1 \cdots p_j \quad \text{and} \quad q^* =
\frac{1}{\lambda} p_{j+1} \cdots p_k, \]
for some scalar $\lambda$ and some $j$, which implies, since $r+r^*$
is a nonzero sum of squares, that
\[ (-1)^t( p_1 \cdots p_j + p_j^* \cdots p_1^*)\]
is a nonzero sum of squares for some $t$.
\end{proof}

Irreducibility of the factors $p_j$ allow a rephrasing of Corollary
\ref{cor:testByDecomp}
which lends itself to computation.  

\begin{thm}
\label{thm:prinMain}
 Let $p$ be a nonconstant homogeneous polynomial,
and let $I$ be the left ideal generated by $p$.
Decompose $p$ as $p = p_1 \cdots p_{k}$,
where the $p_i$ are irreducible and nonconstant.
Then $I$ is real if and only if neither
of the following two hold:
\begin{enumerate}[\rm (1)]
 \item
  \label{item:1} There exists $j$ with $2j \leq k$ such that
\[ p_1 = \lambda_1 p_{2j}^*, \ p_2 = \lambda_2 p_{2j - 1}^*,
\  \ldots, p_j = \lambda_j p_{j+1}^*\]
for some scalars $\lambda_i$.
\item
\label{item:2}
There exists $j$ with  $2j + 1 \leq k$ such that
$$
p_1 = \lambda_1 p_{2j + 1}^*, \ p_2 = \lambda_2 p_{2j}^*,
 \ \ldots, \  p_j = \lambda_j p_{j+2}^*
 $$
for some scalars $\lambda_i$,
and  $(-1)^t(p_{j+1} + \;  p_{j+1}^*)$   is a nonzero sum of squares for some $t$.
\end{enumerate}
\end{thm}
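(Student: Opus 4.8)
The plan is to derive Theorem \ref{thm:prinMain} from Corollary \ref{cor:testByDecomp} by rephrasing, for each $j$, the condition ``$\pm(p_1\cdots p_j + p_j^*\cdots p_1^*)$ is a nonzero sum of squares'' in terms of the factorizations of $p_1\cdots p_j$ and of $p_j^*\cdots p_1^*$ into irreducibles. The key observation is that $p_j^*\cdots p_1^*$ is already an explicit factorization of the adjoint into irreducible factors (each $p_i^*$ is irreducible since $p_i$ is, and $\deg p_i^* = \deg p_i$). So by the uniqueness in Proposition \ref{prop:factorize}, the polynomial $h_j := p_1\cdots p_j$ can equal its own adjoint up to scalar, $h_j = \mu\, h_j^*$, only if the two irreducible factorizations $p_1,\dots,p_j$ and $p_j^*,\dots,p_1^*$ match term-by-term up to scalars, i.e. $p_i = \lambda_i p_{j+1-i}^*$ for all $i$. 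I would first record this as a small lemma: \emph{a homogeneous polynomial $h$ with irreducible factorization $h = p_1\cdots p_j$ satisfies $h + h^*$ (resp.\ $h - h^*$) nonzero, and $h = \mu h^*$ for some scalar $\mu$, if and only if $p_i = \lambda_i p_{j+1-i}^*$ for suitable scalars $\lambda_i$}; and more generally analyze when $h+h^*$ or $-(h+h^*)$ can be a nonzero sum of squares.

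Next I would split into the two cases according to the parity of the length $2j$ versus $2j+1$ of the product appearing in Corollary \ref{cor:testByDecomp}. For a product of \emph{even} length $h = p_1\cdots p_{2j}$: if $h + h^*$ is a nonzero sum of squares then in particular $h+h^*\neq 0$, hence (using the degree-$2d$ Gram matrix and Lemma \ref{lem:homsos}) the $(d,d)$-Gram matrix of $h+h^*$ is PSD and nonzero, forcing $h$ and $h^*$ to ``overlap'' in the middle; I claim this forces $h = \mu h^*$. The cleanest route: if $h+h^*$ is SOS, write $h + h^* = \sum g_\ell^* g_\ell$ with each $g_\ell$ homogeneous of degree $d = \deg(h) = jd_0$ (assuming for a moment the factors have equal degree — in general one argues with the appropriate Gram block, exactly as in Lemma \ref{lem:factorSOS}); then applying Lemma \ref{lem:factorSOS} with $q=1$ forces... actually, more directly, I want to say that $h+h^*$ SOS and homogeneous of degree $2\deg h$ means its Gram matrix at the ``balanced'' split is PSD, and since the off-``diagonal'' contribution $h$ is a single rank-one-ish block, PSD-ness forces $h = h^{*}$ up to a positive scalar and then $p_i = \lambda_i p_{2j+1-i}^*$. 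The middle factor $p_{j+1}$ does not appear in the even case because there is no middle factor; in the \emph{odd} case $h = p_1\cdots p_{2j+1}$, the same matching argument pins down $p_i = \lambda_i p_{2j+2-i}^*$ for $i \le j$, and then $h + h^* = c\, (p_1\cdots p_j)\,(p_{j+1} + p_{j+1}^*)\,(p_1\cdots p_j)^*$ up to the scalars, so it is a nonzero SOS exactly when $\pm(p_{j+1}+p_{j+1}^*)$ is, which is condition \eqref{item:2}. Conversely, each of the two conditions manifestly produces such an SOS, matching one of the $2k$ alternatives in Corollary \ref{cor:testByDecomp}, and one checks that \emph{every} one of those $2k$ alternatives, when a nonzero SOS, falls into case (1) or case (2).

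I expect the main obstacle to be the bookkeeping when the irreducible factors have \emph{unequal} degrees, so that ``$h$ is a nonzero SOS'' is not literally ``$h + h^*$ SOS with $h$ of degree exactly half.'' The point is that $p_1\cdots p_j + p_j^*\cdots p_1^*$ need not be a perfect square degree, and the relevant reduction — that a nonzero SOS of the form $q(p_1\cdots p_j) + (\cdot)^*$ forces a factorization $p_1\cdots p_j = r q^*$ with $r + r^*$ SOS — is exactly Lemma \ref{lem:factorSOS}, already proved in the excerpt; so the real content is the careful induction unwinding the corollary's list. Concretely: given a nonzero SOS $qp + p^*q^*$ with $q$ homogeneous of degree $< \deg p$ (the reduction to homogeneous $q$ is in the proof of Corollary \ref{cor:testByDecomp}), Lemma \ref{lem:factorSOS} gives $p = rq^*$ with $r + r^*$ SOS; unique factorization then writes $r = \lambda p_1\cdots p_j$, $q^* = \lambda^{-1} p_{j+1}\cdots p_k$, so we are reduced to analyzing when $p_1\cdots p_j + p_j^*\cdots p_1^*$ is a nonzero SOS, and applying the matching lemma above finishes. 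The remaining subtlety — ensuring that a scalar like $\lambda_1\cdots\lambda_j$ does not secretly make the would-be SOS vanish or flip sign — is handled by absorbing signs into the ``$(-1)^t$'' already present in the statement, and by noting a nonzero sum of squares cannot be zero. The converse direction, that conditions \eqref{item:1}--\eqref{item:2} do force non-realness, is the easy half: in case \eqref{item:1} one takes $q = (-1)^t p_k^* \cdots p_{2j+1}^*$ and computes $qp + p^*q^* = (-1)^t p_k^*\cdots p_{2j+1}^* \big(p_1\cdots p_{2j} + p_{2j}^*\cdots p_1^*\big) p_{2j+1}\cdots p_k$, and the bracket is $\big(\prod \lambda_i\big)^{\pm}$ times $\big((p_1\cdots p_j) (p_1\cdots p_j)^* \big)$-type nonnegative expression, hence a nonzero SOS after fixing $t$; case \eqref{item:2} is identical with the extra middle factor $p_{j+1}+p_{j+1}^*$ inserted, using the hypothesis that $\pm(p_{j+1}+p_{j+1}^*)$ is a nonzero SOS.
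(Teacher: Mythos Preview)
Your overall strategy --- reduce to Corollary~\ref{cor:testByDecomp}, then analyze for each $\ell$ when $\pm(p_1\cdots p_\ell + p_\ell^*\cdots p_1^*)$ is a nonzero sum of squares using Lemma~\ref{lem:factorSOS} and unique factorization --- is exactly the paper's approach, and your treatment of the converse direction is fine. But the forward direction has a real gap.

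The ``small lemma'' you record says only that $h=\mu h^*$ is equivalent to the factor matching $p_i=\lambda_i p_{\ell+1-i}^*$; that is immediate from Proposition~\ref{prop:factorize}. What you never establish is the implication from ``$\pm(h+h^*)$ is a nonzero sum of squares'' to ``$h=\mu h^*$'' (in the even case) or to the outer-pairing-plus-middle condition (in the odd case). Your Gram-matrix sketch (``the off-diagonal contribution $h$ is a single rank-one-ish block, PSD-ness forces $h=h^*$ up to positive scalar'') does not work as stated: the $(d,d)$-Gram matrix of $h$ need not have rank one, and positive semidefiniteness of $B+B^*$ places no direct constraint forcing $B$ to be symmetric. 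Indeed the implication is \emph{false} for general homogeneous $h$: take $h=x_1^*x_1 + x_1x_2 - x_2^*x_1^*$, which is irreducible, has $h+h^*=2x_1^*x_1$ a nonzero sum of squares, yet $h\neq\mu h^*$ for any scalar $\mu$. So the even/odd split is doing essential work that your argument does not supply. Your paragraph ``Concretely: given a nonzero SOS $qp+p^*q^*$ \dots'' just re-derives Corollary~\ref{cor:testByDecomp} and then invokes the matching lemma, which as noted does not bridge the gap.

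The paper closes this gap by iterating Lemma~\ref{lem:factorSOS} rather than using it once. Given that $(-1)^t(p_1\cdots p_\ell + p_\ell^*\cdots p_1^*)$ is a nonzero sum of squares, write it as $q\hat p+\hat p^*q^*$ with $q=p_1$ and $\hat p=p_2\cdots p_\ell$ (or with $q=p_\ell^*$, whichever has degree at most half); Lemma~\ref{lem:factorSOS} then forces $q^*$ to be a right factor of $\hat p$, and irreducibility plus unique factorization give $p_1=\lambda_1 p_\ell^*$. Moreover the quotient $r=\lambda_1^{-1}p_2\cdots p_{\ell-1}$ satisfies $r+r^*$ a sum of squares, so $(-1)^{t'}(p_2\cdots p_{\ell-1}+p_{\ell-1}^*\cdots p_2^*)$ is again a sum of squares and one inducts on $\ell$, dropping by two each step. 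The induction terminates at $\ell=2$ (case~\eqref{item:1}) or $\ell=1$ (case~\eqref{item:2}). This peeling is the missing mechanism in your write-up; once you insert it in place of the Gram heuristic, your proof becomes the paper's.
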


\begin{remark}
\label{rem:SOS}
 This theorem implies that
to test the conditions of Corollary
{\rm\ref{cor:testByDecomp}} for $p=p_1\cdots p_k$,
one only needs to do the following.
For each $j$ with $2j \leq k$,
examine whether for each $i$ the polynomials
$p_i$ and $p_{2j-i}^*$ are scalar multiples
of each other.
For each $j$ with $2j + 1 \leq k$,
examine whether for each $i$ the polynomials
$p_i$ and $p_{2j+1-i}^*$ are scalar multiples
of each other, and if they are, check to see if
$p_{j+1} + p_{j+1}^*$
is plus or minus a sum of squares.
Checking if a homogeneous degree $2d$ noncommutative polynomial
$p$
is a sum of squares is straightforward: by Lemma
{\rm\ref{lem:homsos}} we only need to
find its $(d,d)$-Gram matrix $($by solving a linear system$)$ and
test whether it is
 positive semidefinite.
\end{remark}

\begin{proof}[Proof of Theorem {\rm\ref{thm:prinMain}}]
 By Corollary \ref{cor:testByDecomp},
$I$ is not real if and only if
for some $\ell$ and $t$,
\[ (-1)^t(p_1 \cdots p_{\ell} + p_{\ell}^* \cdots p_1^*)\]
is a nonzero sum of squares.
We claim that the latter is true if and only if either 
item (1) or item (2) of Theorem \ref{thm:prinMain} holds.
We will prove the nontrivial direction of this claim by induction on $\ell$.

If $\ell = 1$, then $(-1)^t(p_1 + p_1^*)$ is a sum of squares for some $t$
and so item \eqref{item:2} holds.

If $\ell > 1$, suppose that $\deg(p_{\ell}) \leq \frac{1}{2} \deg(p_1 \cdots
p_\ell)$. If
this were not the case, a similar argument could be made using $p_1$ noting that
$\deg(p_{\ell}) > \frac{1}{2}\deg(p_1 \ldots p_{\ell})$ implies that $\deg(p_1)
\leq \frac{1}{2} \deg(p_1 \cdots p_{\ell})$.
Note that $(-1)^t(p_1 \cdots p_{\ell} + p_{\ell}^* \cdots p_1^*)$ is a sum of
homogeneous degree $\deg(p_1 \cdots p_{\ell})$ polynomials and is
a nonzero sum of squares.  Therefore
\[ \deg(p_1 \cdots p_{\ell} + p_{\ell}^* \cdots p_1^*) = \deg(p_1 \cdots
p_{\ell})\]
and this degree is even.
By Lemma \ref{lem:factorSOS}, this implies that $p_1^*$ factors $p_2 \cdots
p_{\ell}$ on the right.  Since $p_1$ is irreducible, by uniqueness of factorization
there exists some nonzero $\lambda_1$ such that $p_1 = \lambda_1 p_{\ell}^*$.
Now either ${\ell} = 2$ and so item \eqref{item:1} holds, or
$p_2 \cdots p_{{\ell}-1}$ satisfies
\[ (-1)^t(p_2 \cdots p_{{\ell}-1} + p_{{\ell}-1}^* \cdots p_2^*)\]
is a sum of squares.  
The result follows by induction.
\end{proof}

Now we prove Theorem \ref{thm:principalI} \eqref{it:psqf}.
\begin{cor}
\label{cor:specialForm}
 Let $0\neq p \in \FNSx$ be homogeneous
and let $I$ be the left ideal
generated by $p$.
Then $I$ is real if and only if
$p$ is not of the form
\beq\label{eq:badd}
p = (s + q)f,
\eeq 
where
$s$ is a nonconstant sum of squares,
$q^* = -q$, and $f\in\FA$.
\end{cor}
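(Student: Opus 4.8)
The plan is to derive Corollary~\ref{cor:specialForm} from Theorem~\ref{thm:prinMain} by translating the combinatorial ``palindrome up to scalars'' conditions (1) and (2) into the single algebraic statement that $p$ factors as $(s+q)f$ with $s$ a nonconstant sum of squares and $q^*=-q$. The two directions are handled separately, and both hinge on the unique factorization of homogeneous polynomials (Proposition~\ref{prop:factorize}).

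First I would prove that if $p$ has the bad form \eqref{eq:badd}, then $I$ is not real. Write $p=(s+q)f$ with $s\neq 0$ a sum of squares, $q^*=-q$, $f\in\FA$; since $p$ is homogeneous, after discarding lower-degree pieces we may take $s$, $q$, $f$ all homogeneous (the leading part of $s+q$ is still of the form $s'+q'$ with $s'$ a nonzero sum of squares and $q'$ skew, because the leading part of a nonzero sum of squares is a nonzero sum of squares and skewness is preserved). Factor $f=f_1\cdots f_m$ and $s+q=h_1\cdots h_r$ into irreducibles. Set $q_0=f^*=f_m^*\cdots f_1^*$; then $\deg(q_0)=\deg(f)<\deg(p)$ and
\[
q_0 p + p^* q_0^* = f^*(s+q)f + f^*(s+q)^*f = f^*\bigl((s+q)+(s+q)^*\bigr)f = f^*(2s)f,
\]
which is a nonzero sum of squares (it is $\sum_i (a_i f)^*(a_i f)$ if $s=\sum_i a_i^*a_i$, and it is nonzero since $f\neq 0$ and there are no zero divisors). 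By Proposition~\ref{prop:degBoundsPI}, $I$ is not real. This argument is clean and does not even need Theorem~\ref{thm:prinMain}; it simultaneously gives part~\eqref{it:psqfih} of Theorem~\ref{thm:principalI}.

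Next, the converse: if $I$ is not real I must produce a bad factorization. By Theorem~\ref{thm:prinMain}, either (1) or (2) holds for $p=p_1\cdots p_k$. In case (2), there is $j$ with $2j+1\le k$, scalars $\lambda_i$ with $p_i=\lambda_i p_{2j+1-i}^*$ for $i=1,\dots,j$, and $t$ with $(-1)^t(p_{j+1}+p_{j+1}^*)$ a nonzero sum of squares. Set $f=p_{2j+2}\cdots p_k$ (empty product $=1$ if $2j+1=k$), and let $g=(-1)^t p_1\cdots p_{j+1}$. Then $p=\pm g\,g^*\,f$ up to the scalars: more precisely, using $p_i=\lambda_i p_{2j+1-i}^*$ one checks $p_{j+2}\cdots p_{2j+1}=\mu\,(p_1\cdots p_j)^*$ for a scalar $\mu=\prod\lambda_i^{-1}$, so $p_1\cdots p_{2j+1}=\nu\,(p_1\cdots p_j)(p_{j+1})(p_1\cdots p_j)^*$ for a scalar $\nu$; absorbing constants and the sign $(-1)^t$ into the factors, we may write $p_1\cdots p_{2j+1}=a^*(s+q)a$ where $a=(p_1\cdots p_j)^*$ and $s+q$ is a scalar multiple of $\pm(p_{j+1}+p_{j+1}^*)+(\text{skew part of }p_{j+1})$; choosing the sign so that $s$ is the sum of squares and $q=\tfrac12(p_{j+1}-p_{j+1}^*)$ is skew, we have $p=a^*(s+q)a f=(a^* s a + a^* q a)(af)$. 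Since $a^*sa$ is a nonzero sum of squares and $(a^*qa)^*=-a^*qa$, with $f'=af$ this is exactly the form \eqref{eq:badd}. Case (1) is the degenerate subcase where the ``middle'' factor is absent: there $p_1\cdots p_{2j}=\nu(p_1\cdots p_j)(p_1\cdots p_j)^*$ is itself a scalar multiple of a hermitian square $a^*a$ (with $a=(p_1\cdots p_j)^*$), which is a nonconstant sum of squares, and with $q=0$ and $f'=(\text{scalar})\cdot p_{2j+1}\cdots p_k$ we again land in \eqref{eq:badd}.

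The main obstacle I expect is purely bookkeeping: correctly tracking the scalars $\lambda_i$ and the sign $(-1)^t$ through the products so that the two pieces genuinely assemble into a nonconstant sum of squares plus a skew-hermitian term. The conceptual content — that condition~(2) of Theorem~\ref{thm:prinMain} is precisely ``a hermitian-sandwiched $\mathrm{SOS}+\mathrm{skew}$ block sitting inside the factorization'' — is straightforward once one writes $p_1\cdots p_{2j+1}$ symmetrically about its center factor. One caveat to watch: the decomposition $s+q$ of the middle factor $p_{j+1}$ into its symmetric and skew parts is automatic, and the sign choice $(-1)^t$ determines whether the symmetric part is $+\mathrm{SOS}$ or $-\mathrm{SOS}$; the hypothesis of case~(2) guarantees one of these signs makes it a genuine nonzero sum of squares, which is exactly what \eqref{eq:badd} requires.
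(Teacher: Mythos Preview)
Your approach matches the paper's: both derive the forward implication by unpacking cases~(1) and~(2) of Theorem~\ref{thm:prinMain} into the sandwich shape $u\,t\,u^*\cdot f$ with $t+t^*$ a sum of squares, and both handle the converse directly via the computation $f^*p+p^*f=2f^*sf$ (the paper records this as Proposition~\ref{prop:1}). Two small slips to fix: in case~(2) your final expression should read $(a^*sa+a^*qa)f$, not $(a^*sa+a^*qa)(af)$; and the homogenization step in the first direction is unnecessary --- indeed your justification there fails when $\deg q>\deg s$ (the leading part of $s+q$ would then be purely skew), but since the identity $f^*p+p^*f=2f^*sf$ needs no homogeneity you can simply delete that paragraph.
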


\begin{proof}
 Theorem \ref{thm:prinMain}
implies that $I$ is not real if and only if
one of two cases hold.
In case \eqref{item:1}, $p$ is equal to
$p = u^*uv$ for some nonconstant polynomial $u$. Here $s = u^*u$ and $q = 0$ gives the
result.
In case \eqref{item:2}, $p$ is equal to
$p = u^*tuv$, with $t + t^*$ plus or minus a sum of squares.
By multiplying $v$ by $\pm 1$, we can assume without loss of generality that $t + t^*$ is a sum of squares.
Setting $s = \frac 12 u^*(t + t^*)u$, $q = \frac12u^*(t - t^*)u$, and $f = v$
gives the result. 
The converse implication (if $p$ is of the form \eqref{eq:badd} then 
$I$ is non-real) is trivial;  see Proposition \ref{prop:1} below.
\end{proof}

\begin{example}
Let $w\in\FA$.
If $w = (s + q)r$, for some nonconstant sum of squares $s$, some
antisymmetric $q$, and some $r$, then $w$ is a monomial
if and only if $s+q$ and $r$ are monomials, that is, $s = u^*u$ for
some monomial $u \in \FA$, $q = 0$, and $r$ is a monomial.
Therefore, Corollary {\rm\ref{cor:specialForm}} in the monomial case states
that the left ideal $I$ generated by a monomial $w$ is real if and only if $w$ is
left unshrinkable.
\end{example}

\subsection{General Principal Left Ideals}
\label{sec:nhom}

Apart from Proposition \ref{prop:degBoundsPI} we have to this point considered only homogeneous ideals.
Now we consider principal left ideals with a  nonhomogeneous generator.
Here the results are less definitive than before.

Now we prove Theorem \ref{thm:principalI} \eqref{it:psqfih}.
If $p \in \FNSx$ is
not homogeneous but has a similar structure to
that of Corollary \ref{cor:specialForm},
then the following holds.

\begin{prop}\label{prop:1}
 If $p$ is of the form
\begin{equation}
 \label{eq:specialForm}
 p = (s + q)f,
\end{equation}
where $s$ is a nonzero sum of
squares, $q^* = -q$,
$f \neq 0$,
and $\deg(s + q) > 0$,
then the left ideal $I$ generated
by $p$ is not real.
\end{prop}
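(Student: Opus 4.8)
The plan is to exhibit explicitly a nonzero sum of squares lying in $I + I^*$ and of degree strictly less than twice the degree of $p$, and then invoke Proposition \ref{prop:degBoundsPI} (or equivalently Theorem \ref{thm:chmnDeg}) to conclude that $I$ is not real. The natural candidate comes directly from the assumed factorization: if $p = (s+q)f$ with $s = \sum_i a_i^* a_i$ a nonzero sum of squares and $q^* = -q$, then one considers the polynomial $f^* p + p^* f$. Computing, $f^*p + p^*f = f^*(s+q)f + f^*(s^* + q^*)f = f^*(s + q + s - q)f = 2 f^* s f = 2\sum_i (a_i f)^*(a_i f)$, which is visibly a sum of squares. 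It is nonzero because $s \ne 0$, $f \ne 0$, and the free algebra is a domain (so $a_i f \ne 0$ for the $i$ with $a_i \ne 0$, and a nonzero sum of hermitian squares is nonzero). Moreover $f^* p \in I^*\cdot$— wait, more carefully: $p^* f^* \in$ hmm; one has $f^* p \in$ the set $\{g p : g \in \FA\}\cdot$— actually $f^* p$ is a left multiple? No: $I = \{gp : g\in\FA\}$, so $f^*p = (f^*)\cdot p \in I$, and $p^* f = (f^* p)^* \in I^*$; hence $f^*p + p^*f \in I + I^*$.

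The remaining point is the degree bound. Since $\deg(f^* p + p^* f) \le \deg(f) + \deg(p) = \deg(f) + \deg(s+q) + \deg(f)$, while $2\deg(p) = 2\deg(s+q) + 2\deg(f)$, the inequality $\deg(f^*p + p^*f) < 2\deg(p)$ holds precisely because $\deg(s+q) > 0$ by hypothesis; equivalently $\deg(f) < \deg(p)$, so $q := f^*$ has degree $< \deg(p)$ and Proposition \ref{prop:degBoundsPI} applies directly, with $2f^*sf = f^*p + p^*f$ the required nonzero sum of squares of the form $qp + p^*q^*$. This contradicts $I$ being real.

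The one subtlety — and the only place any care is needed — is the non-vanishing of $2f^*sf = 2\sum_i (a_i f)^*(a_i f)$. This uses two facts about $\FA$: first, $\FA$ is an integral domain (no zero divisors), so $a_i f \ne 0$ whenever $a_i \ne 0$ and $f \ne 0$; second, a finite sum of nonzero hermitian squares $\sum_j b_j^* b_j$ in $\FA$ is nonzero, because its highest-degree part is $\sum_j (b_j')^*(b_j')$ over the $b_j'$ of maximal degree, and the top-degree coefficients cannot cancel (this is the standard leading-term argument already used, e.g., in the proof of Theorem \ref{thm:shrinkMe} and Lemma \ref{lem:homsos}). I do not expect a real obstacle here; the proof is essentially the verification above, and it is the natural nonhomogeneous analogue of the trivial converse direction in Corollary \ref{cor:specialForm}.
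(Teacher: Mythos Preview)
Your argument is correct and essentially identical to the paper's: both compute $f^*p + p^*f = 2f^*sf$, note it is a nonzero sum of squares in $I + I^*$, observe $\deg(f) < \deg(p)$ since $\deg(s+q) > 0$, and invoke Proposition~\ref{prop:degBoundsPI}. Your proposal is more verbose (and reuses the symbol $q$ for $f^*$, clashing with the antisymmetric $q$ in the statement), but the mathematics is the same.
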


\begin{proof}
Since $\deg(s + q) > 0$, and $\deg(p) = \deg(f) + \deg(s+q)$, we have $\deg(f)
< \deg(p)$.
We see that
\[  f^*p + p^*f = 2f^*sf \in I + I^*\]
is a nonzero sum of squares.  By Proposition
\ref{prop:degBoundsPI}, this implies that $I$ is not real.
\end{proof}

\begin{example}
 Not every non-real principal left ideal
is generated by a polynomial of the form
\eqref{eq:specialForm}.  Consider
\[p = (x_1 - x_1^* + 1)(x_2 - x_2^*) + 1.\]
One can show $p$ is irreducible.
Further,
\[ p + p^* = (x_1 - x_1^*)(x_2 - x_2^*) + (x_2 - x_2^*)(x_1 - x_1^*) + 2\]
which is not a $(\pm)$ sum of squares.
However,
\[ (x_2 - x_2^*)p + p^*(x_2 - x_2^*)^* = 2(x_2 - x_2^*)^*(x_2-x_2^*)\]
is a sum of squares of elements not in the left ideal generated by $p$.
\end{example}

\begin{prop}\label{prop:2}
 If $p$ is of the form
\begin{equation}
\label{eq:conjForm}
  p = (s + q_1)q_2 + c,
\end{equation}
where each $q_i$ is antisymmetric,
$q_2\neq0$,
$s$ is a nonzero sum of squares,
$c$ is a constant,
and $\deg(s + q_1) > 0$
then the left ideal $I$ generated by $p$
is not real.
\end{prop}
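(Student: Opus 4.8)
The plan is to mimic the proof of Proposition \ref{prop:1}, but to compensate for the constant term $c$ and the antisymmetry of $q_2$ by choosing a multiplier that depends not just on $q_2$ but also on the structure of $p$. Write $p = (s+q_1)q_2 + c$ with $q_1^*=-q_1$, $q_2^*=-q_2$, $s$ a nonzero sum of squares, $c$ constant, and $\deg(s+q_1)>0$. Set $d := \deg(s+q_1)$, so $\deg(q_2)\le \deg(p) - d$, and in particular $\deg(q_2) < \deg(p)$. The first step is to compute, for the naive choice $q = q_2$ (or $q = -q_2$),
\[
q_2 p + p^* q_2^* = q_2\big((s+q_1)q_2 + c\big) + \big(q_2^*(s+q_1^*) + c\big)q_2^*
= q_2(s+q_1)q_2 - (-q_2)(s - q_1)(-q_2) + c q_2 - c q_2,
\]
using $q_2^* = -q_2$ and $q_1^* = -q_1$. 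The antisymmetric pieces $q_2 q_1 q_2$ cancel against $-q_2 q_1 q_2$, the constant pieces $cq_2$ and $-cq_2$ cancel, and one is left with $2 q_2 s q_2 = 2 q_2^* s q_2$, which is a nonzero sum of squares provided $q_2 s q_2 \neq 0$; since $s\neq 0$ is a sum of squares and $q_2\neq 0$, and the free algebra is a domain, $q_2 s q_2 \neq 0$. Hence $q_2 p + p^* q_2^* \in I + I^*$ is a nonzero sum of squares with $\deg(q_2) < \deg(p)$, and Proposition \ref{prop:degBoundsPI} yields that $I$ is not real.

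The key subtlety, and the step I expect to require the most care, is checking that the cancellations above really occur and that the surviving term is genuinely a \emph{nonzero} sum of squares. The cancellation of the constant contributions is clean since $c$ is scalar. The cancellation of the $q_1$-contributions uses $q_1^* = -q_1$ together with $q_2^* = -q_2$: indeed $q_2 q_1 q_2$ appears from the first summand, and from $p^* q_2^* = (q_2^*(s-q_1) + c)q_2^* = (-q_2)(s-q_1)(-q_2) + cq_2^*$ one gets $q_2 s q_2 - q_2 q_1 q_2 - cq_2$, so the two copies of $q_2 q_1 q_2$ cancel and the two copies of $q_2 s q_2$ add. For the nonvanishing of $q_2 s q_2$, write $s = \sum_i a_i^* a_i$ with not all $a_i$ zero; then $q_2 s q_2 = \sum_i (a_i q_2)^*(a_i q_2)$ — wait, this is not quite a sum of squares in that form, so instead argue directly: $q_2^* s q_2 = \sum_i (a_i q_2)^* (a_i q_2)$ is manifestly a sum of squares, and it is nonzero because $a_{i_0} q_2 \neq 0$ for any $i_0$ with $a_{i_0}\neq 0$ (the free algebra has no zero divisors), so the leading terms of the $(a_i q_2)^*(a_i q_2)$ cannot all cancel.

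Finally I would remark that Proposition \ref{prop:2} is the companion of Proposition \ref{prop:1}: the latter handles generators of the shape (antisymmetric-plus-SOS) times $f$ with no constant added, while the former shows that adding a constant to such a generator — as long as the antisymmetric factor $q_2$ on the right is itself a pure antisymmetric polynomial — still forces non-reality, via essentially the same one-line sum-of-squares certificate $2q_2^* s q_2$. This is exactly the phenomenon illustrated in the preceding example with $p = (x_1 - x_1^* + 1)(x_2 - x_2^*) + 1$, where the multiplier $q = x_2 - x_2^*$ produces $2(x_2-x_2^*)^*(x_2-x_2^*)$; Proposition \ref{prop:2} is the general statement behind that computation.
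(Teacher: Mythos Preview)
Your approach is exactly the paper's: exhibit the sum-of-squares certificate $2q_2^* s q_2 \in I + I^*$ via the multiplier $\pm q_2$ and invoke Proposition~\ref{prop:degBoundsPI}. There is one sign slip to repair. Since $q_2^* = -q_2$, one has $q_2 s q_2 = (-q_2^*) s q_2 = -\,q_2^* s q_2$, not $+\,q_2^* s q_2$; so your choice $q = q_2$ gives
\[
q_2 p + p^* q_2^* \;=\; 2\,q_2 s q_2 \;=\; -\,2\,q_2^* s q_2,
\]
the \emph{negative} of a sum of squares. The remedy is the one you already flag parenthetically: take $q = -q_2 = q_2^*$, which yields $q_2^* p + p^* q_2 = 2\,q_2^* s q_2$, a genuine nonzero sum of squares --- this is verbatim the paper's one-line computation. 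Your verification that $q_2^* s q_2 \neq 0$ (write $s=\sum_i a_i^* a_i$ and use that $\FA$ is a domain, so some $a_i q_2 \neq 0$) is correct and a bit more explicit than the paper, which leaves this implicit.
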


\begin{proof}
Since $\deg(s + q_1) > 0$, we see $\deg(q_2) < \deg(p)$.
We see 
\[q_2^*p + p^*q_2 = 2q_2^*sq_2 \in I + I^*\]
is a nonzero sum of squares, which implies that
$I$ is not real.
\end{proof}

However, Propositions \ref{prop:1} and \ref{prop:2} do not describe all polynomials generating
 non-real principal left ideals.

\begin{ex}
Consider the following univariate polynomial
\[
p=xx^*-(x^*)^2+2x+4.
\]
As is easily seen, $p$ is not of the form \eqref{eq:specialForm} or \eqref{eq:conjForm}.
On the other hand, the left ideal $I$ generated by $p$ is non-real. Indeed,
\[
(x+2) p + p^*(x+2)^* = (4+2x^*)^* (4+2x^*) \in I+I^*,
\]
but $4+2x^*\not\in I$. We shall investigate reality of ideals generated by quadratics in more
detail in Section {\rm\ref{sec:quad}} below.
\end{ex}

\begin{prop}
\label{prop:leadingFactorSquares}
 Let $I$ be the left ideal generated by some polynomial $p$.
Let $p'$ be the leading polynomial of $p$, and let $p' = p_1 \cdots p_k$ be a
factorization of $p'$ into irreducible parts.
If $I$ is not real, then at least one of
\[ \pm(p_1 + p_1^*), \pm(p_1p_2 + p_2^*p_1^*), \ldots, \pm(p_1\cdots p_k + p_k^*
\cdots p_1^*)\]
is a sum of squares.
\end{prop}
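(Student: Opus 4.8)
The plan is to reduce to the homogeneous situation already handled by Corollary \ref{cor:testByDecomp}, by passing to leading polynomials. If $I$ is not real, then by Proposition \ref{prop:degBoundsPI} there is a nonzero sum of squares $s = qp + p^*q^*$ with $\deg(q) < \deg(p)$. First I would isolate the top-degree behaviour: let $q'$ be the leading polynomial of $q$ and $p'$ the leading polynomial of $p$, so $\deg(q') + \deg(p') > \deg(q p + p^* q^*)$ is the only possible degree where cancellation could fail, and $q'p' + (p')^*(q')^*$ is precisely the leading polynomial of $qp + p^*q^*$ \emph{provided it is nonzero}. Since a nonzero sum of squares has its leading polynomial again a nonzero sum of squares (the top-degree terms of a sum of hermitian squares cannot cancel, as used repeatedly in Section \ref{sec:mono} and in the proof of Corollary \ref{cor:testByDecomp}), in the case $q'p' + (p')^*(q')^* \neq 0$ we immediately get that $q'p' + (p')^*(q')^*$ is a nonzero sum of squares of homogeneous polynomials with $\deg(q') < \deg(p')$, and then Lemma \ref{lem:factorSOS} together with the uniqueness of homogeneous factorization (Proposition \ref{prop:factorize}) forces, exactly as in the proof of Corollary \ref{cor:testByDecomp}/Theorem \ref{thm:prinMain}, one of $\pm(p_1 \cdots p_\ell + p_\ell^* \cdots p_1^*)$ to be a nonzero sum of squares for some $\ell \le k$, which is the desired conclusion.

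The remaining issue is the case $q'p' + (p')^*(q')^* = 0$, i.e. the leading terms cancel. Here I would argue as in the proof of Corollary \ref{cor:testByDecomp}: if $q'p' + (p')^*(q')^* = 0$ then $qp + p^*q^* = (q - q')p + p^*(q-q')^*$, and $q - q'$ has strictly smaller degree than $q$. One repeats, replacing $q$ by $q - q'$, and after finitely many steps either the leading polynomial of the current $q$ times $p'$ fails to cancel — putting us back in the previous paragraph — or $q$ has been reduced to $0$, contradicting that $qp + p^*q^*$ is a \emph{nonzero} sum of squares. (Strictly, one should note that throughout this reduction the polynomial $qp + p^*q^*$ itself is unchanged and nonzero, so the process must terminate in the non-cancelling case.) Thus in all cases we obtain a nonzero sum of squares of the form $(-1)^t(p_1 \cdots p_\ell + p_\ell^* \cdots p_1^*)$, which is one of the listed $2k$ polynomials.

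I expect the only delicate point to be the bookkeeping in the cancellation step: one must be careful that ``leading polynomial'' is taken with respect to the \emph{same} total degree bound throughout, and that the degree of $qp + p^*q^*$ may itself be strictly smaller than $\deg(q) + \deg(p)$, so the matching of $\deg(q') < \deg(p')$ needs the observation (already implicit in the proof of Theorem \ref{thm:prinMain}) that whenever $q'p' + (p')^*(q')^*$ is a nonzero sum of squares, its degree is even and equals $2d$ for some $d \le \deg(p')$, whence one of $p_1, \ldots, p_k$ has $\deg \le \tfrac12\deg p'$ and Lemma \ref{lem:factorSOS} applies to that end of the product. Everything else is a verbatim replay of the homogeneous argument in Corollary \ref{cor:testByDecomp}, now applied to $p'$ in place of $p$.
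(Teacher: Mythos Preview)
Your overall strategy --- pass to leading polynomials and invoke Lemma~\ref{lem:factorSOS} with unique factorization --- is exactly what the paper does. The non-cancelling case is fine and matches the paper verbatim.

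The descent in your cancellation case, however, does not work as written. You claim that if $q'p' + (p')^*(q')^* = 0$ then $qp + p^*q^* = (q-q')p + p^*(q-q')^*$. This is borrowed from the proof of Corollary~\ref{cor:testByDecomp}, but there $p$ is \emph{homogeneous}, so $q'p$ is itself the leading part of $qp$. Here $p$ is not homogeneous: writing $p = p' + p''$ with $p''$ of lower degree, one has
\[
qp + p^*q^* - \bigl[(q-q')p + p^*(q-q')^*\bigr] \;=\; q'p + p^*(q')^* \;=\; q'p'' + (p'')^*(q')^*,
\]
which need not vanish. So the reduction $q \mapsto q - q'$ does not preserve the identity $s = qp + p^*q^*$, and the descent stalls.

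The paper sidesteps this entirely by observing that the statement of the proposition asks only for \emph{a} sum of squares, not a nonzero one. Thus if $q'p' + (p')^*(q')^* = 0$ it is still (trivially) a sum of squares, and Lemma~\ref{lem:factorSOS} applies in both cases at once: $p' = r(q')^*$ with $r + r^*$ a sum of squares, and unique factorization gives $r = \lambda\, p_1\cdots p_j$, so $\pm(p_1\cdots p_j + p_j^*\cdots p_1^*)$ is a sum of squares. No case split, no descent. Your extra work was aimed at the stronger conclusion ``nonzero sum of squares,'' which is neither asserted nor needed here.
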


\begin{proof}
 By Proposition \ref{prop:degBoundsPI}, if $I$ is not real, then there exists
a $q$ with $\deg(q) < \deg(p)$ and $qp + p^*q^*$ being a nonzero sum of squares.
Let $q'$ be the leading polynomial of $q$.
Then
\[ q'p' + (p')^*(q')^*\]
is either $0$ or the leading polynomial of $qp + p^*q^*$, in which case it is a
sum of squares.  Hence, in either case, $q'p' + (p')^*(q')^*$ is a sum of
squares.
By Lemma \ref{lem:factorSOS}, this necessarily
implies that $p' = p_1 \cdots p_j (\lambda q')$ for some $j$ and some nonzero
$\lambda$, and that
$\pm(p_1 \cdots p_j + p_j^* \cdots p_1^*)$ is a sum of squares.
\end{proof}

\subsection{Analytic plus antianalytic generators}
\label{subsec:further}
Recall that a polynomial $p\in\FA$ is  analytic if it contains no $x_j^*$, i.e.,
if $p\in\R\ax$.
Likewise, a polynomial $p\in\R\axstar$ is called {\bf antianalytic}.

\begin{lemma}
\label{lem:antiHomIrr}
 Let $a, b\in \FA$ be homogeneous analytic polynomials such that $\deg(a) =
\deg(b) > 0$.
Then $a + b^*$ is irreducible.
\end{lemma}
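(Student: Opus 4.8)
The plan is to reduce irreducibility of $p := a + b^*$ to a rank computation via the Gram-matrix criterion. Write $d := \deg(a) = \deg(b) > 0$; in particular $a,b \neq 0$, and $p$ is homogeneous of degree $d$ (its analytic monomials come from $a$, its antianalytic monomials from $b^*$, and since $d \ge 1$ no monomial is both analytic and antianalytic, so $p \ne 0$). By Lemma \ref{lem:alg}, $p$ factors as a product of two nonconstant polynomials if and only if, for some $d_1, d_2 \geq 1$ with $d_1 + d_2 = d$, the $(d_1, d_2)$-Gram matrix $A$ of $p$ has rank $1$. So I would show that for every such pair $(d_1,d_2)$ the matrix $A$ has rank at least $2$; irreducibility then follows (cf.\ Algorithm \ref{algor:factor}).

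Fix $d_1, d_2 \geq 1$ with $d_1 + d_2 = d$. The rows of $A$ are indexed by words $u$ of length $d_1$, the columns by words $v$ of length $d_2$, and $A_{u,v}$ is the coefficient of $u^* v$ in $p$. The first step is to determine which entries can be nonzero. The word $u^* v$ has length $d$; because $d_1 \ge 1$ and $d_2 \ge 1$, it is analytic exactly when $u$ is antianalytic and $v$ is analytic, antianalytic exactly when $u$ is analytic and $v$ is antianalytic, and otherwise it contains both some $x_i$ and some $x_j^*$. Since $a$ is a linear combination of analytic words of length $d$ and $b^*$ a linear combination of antianalytic words of length $d$, the coefficient $A_{u,v}$ vanishes unless either $u$ is antianalytic and $v$ is analytic, in which case $A_{u,v}$ is the coefficient of the analytic word $u^* v$ in $a$, or $u$ is analytic and $v$ is antianalytic, in which case $A_{u,v}$ is the coefficient of the antianalytic word $u^* v$ in $b^*$; these two cases are mutually exclusive.

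Consequently, after independently reordering the rows as (antianalytic words of length $d_1$; analytic words of length $d_1$; all remaining words of length $d_1$) and the columns as (analytic words of length $d_2$; antianalytic words of length $d_2$; all remaining words of length $d_2$), the matrix $A$ takes the block-diagonal form
\[
A = \begin{bmatrix} A_a & 0 & 0 \\ 0 & A_b & 0 \\ 0 & 0 & 0 \end{bmatrix},
\]
where $A_a$ records the coefficients of $a$ and $A_b$ those of $b^*$ (up to relabeling, $A_a$ is the $(d_1,d_2)$-coefficient matrix of $a$ in the free algebra $\R\ax$, and similarly for $b$). Since $a \neq 0$ we have $A_a \neq 0$, so $\rank A_a \geq 1$; likewise $\rank A_b \geq 1$. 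Hence $\rank A = \rank A_a + \rank A_b \geq 2$, as required; as $(d_1,d_2)$ was arbitrary, $p$ is irreducible.

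I do not anticipate a genuine obstacle. The only point needing care is the verification of the block structure: that every "mixed" word $u^* v$ contributes a zero coefficient (this is where $d_1,d_2 \ge 1$ and the absence of mixed monomials in $p$ enter), and that the $a$-block and the $b^*$-block occupy disjoint rows and disjoint columns so their ranks add.
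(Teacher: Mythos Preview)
Your proof is correct. The core observation is the same as the paper's: every monomial of $a+b^*$ is either purely analytic or purely antianalytic, and these two families force incompatible constraints on any putative factorization. The packaging, however, is different. The paper argues directly: assuming $a+b^*=\big(\sum A_u u\big)\big(\sum B_w w\big)$, it picks a nonzero analytic term $u_0w_0$, deduces $u_0,w_0$ are analytic, and then shows every $u$ with $A_u\neq 0$ and every $w$ with $B_w\neq 0$ must be analytic as well, forcing $a+b^*$ to be analytic --- a contradiction. You instead invoke the rank criterion of Lemma~\ref{lem:alg}: the $(d_1,d_2)$-Gram matrix splits, after reordering, into a block-diagonal form with one nonzero block coming from $a$ and another from $b^*$, sitting in disjoint rows and disjoint columns; hence the rank is at least $2$ for every split and no factorization exists. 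Your route has the advantage of using the machinery already set up in Section~\ref{sec:hom} and makes the linear-algebraic structure explicit; the paper's route is slightly shorter and needs no reference to Gram matrices.
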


\begin{proof}
 Assume that $a+b^*$ is reducible. Proposition \ref{prop:factorize} then implies that $a+b^*$ is a product of
 two nonconstant homogeneous polynomials, i.e.
\[a + b^* =  \Big( \sum_{\substack{u \in \axs\\ \deg(u) = d}} A_u u
\Big) \Big( \sum_{\substack{w \in \axs\\ \deg(w) = e}} B_w w\Big),\]
for some $d, e\in\N$. The coefficient of $uw$ in $a + b^*$ is
$A_uB_w$.
We see that $a + b^*$ has only terms which are analytic or
antianalytic.  Suppose $u_0,w_0$ are such that ${u_0}{w_0}$ is
analytic.  Then $u_0, w_0$ are analytic.  This implies that for any other $u$,
either $A_u = 0$ or $uw_0$ is analytic or antianalytic, which necessarily
implies that $u$
is analytic.  Similarly, either $B_w = 0$ or $w$ is analytic.  This implies
that $a + b^*$ is analytic, which is a contradiction.
\end{proof}

Now we prove 
Theorem \ref{thm:principalI}  \eqref{it:pluriharm}. 
First we recall its statement. 

\noindent
{\bf
Theorem  \ref{thm:principalI}  \eqref{it:pluriharm}}. \
 {\it
 Suppose $I \subset \FA$ is the left ideal generated by a nonconstant polynomial
$p = a +
b^*$, where $a$ and $b$ are analytic polynomials.
Then $I$ is not real if and only if $p = a - a^* + c$ for some nonzero constant
$c$.
}

\begin{proof}
First, if $p = a- a^* + c$, then $p + p^* = 2c \in I + I^*$, 
which implies that
$1 \in \sq{I}$.  Therefore $I$ is not real.

Conversely, suppose $I$ is not real.
 Let $d = \deg(p)$.  We have $\deg(p) = \max\{\deg(a),
\deg(b) \}$ since the leading polynomials of $a$ and $b^*$ are analytic and
antianalytic respectively, and hence cannot cancel each other out.
Let $a', b'$ be the degree $d$ terms of $a$ and $b$ respectively, so that
$p' = a' + (b')^*$ is the leading polynomial of $p$.

If $b' = 0$, then $p' = a'$.  By Proposition \ref{prop:leadingFactorSquares},
$p' = p_1 \cdots p_j f$ for some $f$ and some nonconstant irreducible factors
$p_i$, and
\[ \pm (p_1 \cdots p_j + p_j^* \cdots p_1^*)\]
is a sum of squares.  However, since $p'$ is analytic, this implies that $p_1
\cdots p_j$ is analytic.  An analytic polynomial has no terms of the form
$m^*m$, whereas a nonconstant sum of squares does. 
This is a contradiction, so $b' \neq 0$.  Similar reasoning shows $a' \neq 0$.

Next, by Lemma \ref{lem:antiHomIrr}, $a' + (b')^*$ is irreducible.
By Proposition \ref{prop:leadingFactorSquares},
\[ a' + b' + (a')^* + (b')^*\]
is a sum of squares.
A sum of an analytic and an antianalytic
 polynomial has no terms of the form
$m^*m$, whereas a nonconstant sum of squares does.
Therefore $a' + b' + (a')^* + (b')^* = 0$, which implies that $a' = -b'$. 

Next, since $I$ is not real, there exists a $q$ such that $\deg(q) < \deg(p)$
and $qp + p^*q^*$ is a nonzero sum of squares.
Let $q'$ be the leading polynomial of $q$.
Then
\[ q'\big(a' - (a')^*\big) + \big(a' - (a')^*\big)^*(q')^*\]
is either $0$ or is the leading polynomial of $qp + p^*q^*$.
In either case, it is a sum of squares.
Since $\deg(q) < \deg(p) = \deg\big(a' - (a')^*\big)$, by Lemma \ref{lem:factorSOS},
this implies that
$(q')^*$ is a factor of $a' -(a')^*$ on the right.
However, by Lemma \ref{lem:antiHomIrr}, $a - a^*$ is irreducible.
Therefore $q'$ is constant, which implies that $q$ is constant.
Therefore $\pm(p + p^*)$ is a nonzero sum of squares.

Let $e = \deg(p + p^*)$.  Let $a_e', b_e'$ be the degree $e$ elements of $a$
and $b$ respectively so that $a_e' + (b_e')^*$ is the leading polynomial of $p
+ p^*$.  As before, a nonconstant sum of squares cannot be expressed in this
form.  Hence $e = 0$, and so $p$ is of the form $a - a^* + c$, where $c = b_e$
is a nonzero constant. 
\end{proof}

\begin{example}
Theorem {\rm \ref{thm:principalI} \eqref{it:pluriharm}}
only applies to principal left
ideals. Indeed, if $I$ is generated by $x_1^2$ and $x_1 + x_1^*$,
then $I$ is not real.
We have $-x_1^2 + x_1(x_1 + x_1^*) = x_1x_1^* \in I + I^*$ but $x_1 \not\in
I$.
\end{example}

\begin{cor}\label{cor:linear}
 Suppose $I \subset \FA$ is the left ideal generated by
a polynomial $p$ with $\deg(p) = 1$.  Then $I$ is real if and only if
$p$ is not of the form $a - a^* + c$, where $a$ is analytic and
homogeneous of degree $1$ and $c \neq 0$ is a constant.
\end{cor}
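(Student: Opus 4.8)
The plan is to obtain Corollary \ref{cor:linear} as an immediate specialization of Theorem \ref{thm:principalI}\eqref{it:pluriharm}. First I would note that any polynomial $p$ with $\deg(p) = 1$ can be put in the form required by that theorem: collecting the constant term and all terms in the $x_i$ into an analytic polynomial $a$, and all terms involving some $x_i^*$ into $b^*$ where $b = \sum_i \beta_i x_i$, one gets $p = a + b^*$ with $a, b$ analytic. Since $\deg(p) = 1 > 0$, $p$ is nonconstant, so Theorem \ref{thm:principalI}\eqref{it:pluriharm} applies and tells us that $I$ is \emph{not} real if and only if $p = \tilde a - \tilde a^* + c$ for some analytic $\tilde a$ and some nonzero constant $c$.

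Next I would reconcile this with the statement of the corollary, i.e.\ show that for a degree-one $p$ the condition ``$p = \tilde a - \tilde a^* + c$ for some analytic $\tilde a$ and nonzero $c$'' is equivalent to ``$p = a - a^* + c$ for some homogeneous degree-one analytic $a$ and nonzero $c$.'' The converse direction is trivial (a polynomial $a - a^* + c$ with $a$ homogeneous of degree $1$ and $c \ne 0$ is in particular of the form $\tilde a - \tilde a^* + c$ with $\tilde a = a$ analytic), so assume $p = \tilde a - \tilde a^* + c$ with $\tilde a$ analytic and $c \ne 0$. The leading parts of $\tilde a$ and $\tilde a^*$ are analytic and antianalytic respectively, hence cannot cancel, so $\deg(\tilde a - \tilde a^*) = \deg(\tilde a)$ whenever $\tilde a$ is nonconstant; comparing with $\deg(p) = 1$ forces $\deg(\tilde a) = 1$. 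Writing $\tilde a = a_1 + \alpha$ with $a_1$ its homogeneous degree-one part and $\alpha \in \R$ its constant term, and using $\alpha^* = \alpha$, we obtain $p = a_1 - a_1^* + c$ with $a_1$ homogeneous of degree $1$ and $c \ne 0$, which is exactly the excluded form.

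I do not expect any genuine obstacle: the whole argument is a bookkeeping reduction to the already-proved Theorem \ref{thm:principalI}\eqref{it:pluriharm}. The only mild points needing care are the degree computation that pins down $\deg(\tilde a) = 1$ and lets one absorb and then discard the constant term of $\tilde a$, and the observation that the ``nonconstant $p$'' hypothesis of that theorem is automatic once $\deg(p) = 1$.
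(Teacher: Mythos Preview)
Your proposal is correct and follows essentially the same route as the paper: write $p=a+b^*$ with $a,b$ analytic and invoke Theorem~\ref{thm:principalI}\eqref{it:pluriharm}. The only difference is bookkeeping---the paper puts the constant term into $b$ (so $a$ has no constant term), which makes the ``$a$ homogeneous of degree $1$'' conclusion immediate and avoids your separate reconciliation step.
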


\begin{proof}
Every polynomial $p$ of degree $1$ is of the form $p = a + b^*$, where $a, b$
are analytic, and $a$ has no constant term.
The result follows from
Theorem \ref{thm:principalI} item \ref{it:pluriharm}.
\end{proof}

\subsection{Algorithm for checking whether  a principal ideal is real}
\label{sec:poor}

The paper \cite{CHMN} gives an algorithm  for 
computing the real radical of any finitely generated left ideal $I$. 
Here we discuss an improvement of a more limited algorithm
which determines whether or not
a given left ideal 
$I$ is real. 
A test version
has been implemented, and we give
an example.

Suppose $p\in\FA_d$ is given and let $I$ be the left ideal generated by $p$.
We employ Proposition \ref{prop:degBoundsPI} to test reality of $I$. Consider
the feasibility problem
\beq\label{eq:lmi}
\begin{split}
s&\in\FA  \quad\text{is a nonzero sum of squares}\quad \\
\text{s.t. }
s & = qp+p^*q^* \quad\text{for some $q$ with }
\deg(q)<\deg(p).
\end{split}
\eeq
This is an instance of an LMI. Namely, let $M_{< d}$ be a vector whose entries
are all monomials of degree $<d$. Then $s$ is a sum of squares if and only if
there is a $G\succeq0$ with $M_{<d}^*GM_{<d}=s$;
cf.~\cite{MP,KP,Hel}. The equation $s=qp+p^*q^*$
translates into a system of linear equations involving the coefficients of $q$ and
entries of $G$. Thus \eqref{eq:lmi} is a feasibility 
semidefinite program (SDP)
\beq\label{eq:sdp}
\begin{split}
\text{Find } 0\neq G &\succeq0\\
\text{s.t. } M_{<d}^*GM_{<d} &= qp+p^*q^*, \quad \deg(q)<d
\end{split}
\eeq
and 
can thus be solved using a standard
SDP solver. To search for a nonzero $G$ we normalize \eqref{eq:sdp} by requiring 
$\tr(G)=1$.
The left ideal $I$ is non-real if and only if \eqref{eq:sdp} is feasible.

\begin{rem}
We remark that checking if a noncommutative polynomial
is a sum of squares
can be done 
exactly using quantifier elimination  $($cf.~\cite{PW98}$)$,
but this is only viable for problems of small
size
$($since the complexity for quantifier elimination is doubly exponential \cite[Section 11]{BPR}$)$. 
Hence in practice we employ SDPs 
$($which typically run in polynomial time, cf.~\cite{WSV00}$)$
for numerical verification; cf.~\href{http://ncsostools.fis.unm.si}{\tt NCSOStools} \cite{CKP} for a  computer
algebra package which does this.
\end{rem}

We demonstrate the above with a  simple example.
 
\begin{ex}
Let $p=xx^*-x^*x-1$. The corresponding left ideal $I$ is real. Indeed,
write $q=q_0+q_1x+q_2x^*$, and $G=[g_{ij}]_{i,j=1}^3$. Then \eqref{eq:sdp}
becomes
\bes\label{eq:quad}
\begin{array}{c}
G\succeq0, \\[.1cm]
 g_{1 1}+g_{2 2}+g_{3 3}-1=0 \qquad  g_{1 1}+2 q_{0}=0 \qquad  g_{1 1}+g_{2 2}+4 q_{0}+2 q_{2}=0 \\
 g_{1 1}-2 g_{1 2}-2 g_{1 3}+g_{2 2}+2 g_{2 3}+2 g_{3 3}+2 q_{1}=0 \\
 g_{1 1}+g_{1 2}+g_{1 3}-g_{2 2}-3 g_{2 3}-2 g_{3 3}+4 q_{0}-q_{1}-3 q_{2} =0\\
 g_{1 1}-4 g_{1 2}-4 g_{1 3}+5 g_{2 2}+8 g_{2 3}+4 g_{3 3}+4 q_{0}-8 q_{1}-6 q_{2} =0\\
 g_{1 1}+2 g_{1 2}+2 g_{1 3}+2 g_{2 2}+2 g_{3 3}+2 q_{0}+2 q_{1}+2 q_{2} =0\\
 g_{1 1}-2 g_{1 2}-2 g_{1 3}+g_{2 2}+2 g_{2 3}+g_{3 3}+2 q_{0}-2 q_{1}-2 q_{2}=0. \\
\end{array}
\ees
Solving the above linear system yields
\bes\label{eq:solve}
g_{1 1}=1,\quad g_{1 3}=-g_{1 2},\quad g_{2 2}=1,\quad g_{2 3}=0,\quad g_{3 3}=-1,\quad
   q_{0}=-\frac{1}{2},\quad q_{1}=0,\quad q_{2}=0.
\ees
Putting this solution back into $G$ leads to the LMI
\[
G=
\left[
\begin{array}{ccc}
 1 & g_{1 2} & -g_{1 2} \\
g_{12}& 1 & 0 \\
 -g_{12} & 0 & -1
\end{array}
\right] \succeq0,
\]
which is clearly infeasible.
\end{ex}

The algorithm based on \eqref{eq:sdp} extends to non-principal left ideals $I$ in a straightforward way.
To check for membership in $I+I^*$ (needed to encode the linear constraints in \eqref{eq:sdp}) 
any generators $p_j$ for $I$ will do, the fewer the better.
Finding a smallest generating set
for an ideal is hard, hence a reduced Gr\"obner basis is a reasonable choice.
We refer the reader to
\cite{Gr00,Mor94,Rei95,Kel97,Hey01,Lev05} and the references therein
for the theory of noncommutative Gr\"obner bases.

We have implemented the algorithm for test purposes under \href{http://math.ucsd.edu/~ncalg}{\tt NCAlgebra} \cite{HOSM}. A major limitation is construction of the LMI
required. This arises from  manipulation of the large number of monomials in $M_{<d}^*GM_{<d}$.
One could improve performance  by storing these in advance.
Also, the benefits when the generators $p_j$ of $I$ have few terms  could be explored further.

\section{Principal ideals generated by univariate quadratic nc polynomials}
\label{sec:quad}

 In Subsection \ref{subsec:further} we 
 characterized linear polynomials giving
 rise to real principal left ideals. In this section we
 discuss a complete classification whether or not the  
 left ideal generated by a univariate quadratic polynomial is real. 
 Of course we hope that an elegant general result will emerge, and
 perhaps the issues and structures exposed in this particular
 nontrivial  case 
 will provide some guidance in that direction.

\subsection{A sum of squares tool}
We start by characterizing univariate quadratics which are sums of squares,
since we need this for our classification. Its straightforward proof is left 
as an exercise for the reader.

\begin{lem}\label{lem:2sos}
For a symmetric univariate quadratic polynomial
\[
p=a_0+a_1 (x+x^*)+a_2\big(x^2+(x^*)^2\big) + a_3xx^*+a_4x^*x,
\]
the following are equivalent:
\begin{multicols}{2}
\ben[\rm(i)]
\item
$p$ is a sum of two squares;
\item
$p$ is a sum of squares;
\item
the following LMI is feasible
 \beq\label{eq:gram21}
 G= \begin{bmatrix}
  a_0 & \lambda & a_1 - \lambda \\
 \lambda  &  a_3 & a_2 \\
a_1- \lambda &  a_2& a_4
\end{bmatrix}\succeq0;
\eeq
\item
$p(X)\succeq0$ for all $n\in\N$ and all $X\in \R^{n\times n}$;
\item
$p(X)\succeq0$ for all  $X\in \R^{2\times 2}$;
\item
$-a_1^2 + a_0 (2 a_2+a_3+a_4)\geq0$,  $a_0\geq0$, and
\beq\label{gram:20}
\begin{bmatrix}
 a_3 & a_2 \\
 a_2& a_4
\end{bmatrix} \succeq0.
\eeq
\een
\end{multicols}
\end{lem}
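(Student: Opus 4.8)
The plan is to verify the chain of equivalences (i) $\Rightarrow$ (ii) $\Rightarrow$ (iii) $\Rightarrow$ (iv) $\Rightarrow$ (v) $\Rightarrow$ (vi) $\Rightarrow$ (i), since several links are either trivial or already available. The implication (i) $\Rightarrow$ (ii) is immediate. For (ii) $\Rightarrow$ (iii): a symmetric univariate quadratic $p$ has degree $2$, so by Lemma~\ref{lem:homsos}-type reasoning one works with the vector $M = (1, x, x^*)^*$ of monomials of degree $\leq 1$, and $p = M^* G M$ for a symmetric $G$; unlike the homogeneous case the Gram matrix is not unique, and the ambiguity is exactly the one parameter $\lambda$ visible in \eqref{eq:gram21} (the coefficient of $x + x^*$ can be split between the $(1,2)$ and $(1,3)$ entries). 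Matching coefficients of $1$, $x+x^*$, $x^2 + (x^*)^2$, $xx^*$, $x^*x$ in $M^*GM$ against $p$ forces $G$ to have the displayed form, and $p$ being a sum of squares forces $G \succeq 0$ for some choice of $\lambda$, which is precisely feasibility of \eqref{eq:gram21}. Conversely feasibility of \eqref{eq:gram21} gives $p = (\sqrt{G} M)^*(\sqrt{G} M)$, a sum of squares, and since $G$ is $3 \times 3$ of rank $\leq 3$ one can in fact write it with at most... well, rank considerations give (iii) $\Rightarrow$ (i) with two squares once we observe the kernel always contains a vector once the constraints are imposed; alternatively route (iii) $\Rightarrow$ (i) through (vi).

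For (iii) $\Rightarrow$ (iv): if $G \succeq 0$, then $p(X) = M(X)^* (G \otimes I) M(X) \succeq 0$ for every tuple $X = (X)$ of real square matrices, where $M(X) = (I, X, X^*)^*$; this is the standard evaluation argument. The implication (iv) $\Rightarrow$ (v) is trivial (specialize to $2\times 2$). The crux is (v) $\Rightarrow$ (vi): from $p(X) \succeq 0$ for all $X \in \R^{2\times 2}$ we must extract the three scalar inequalities. Testing $X$ diagonal (so $X = X^*$) reduces $p$ to the commutative quadratic $a_0 + 2a_1 t + (2a_2 + a_3 + a_4) t^2 \geq 0$ for all $t \in \R$, which gives $a_0 \geq 0$ and the discriminant condition $-a_1^2 + a_0(2a_2 + a_3 + a_4) \geq 0$. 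Testing $X$ nilpotent, say $X = \begin{bmatrix} 0 & s \\ 0 & 0\end{bmatrix}$, kills $x^2, (x^*)^2, x + x^*$ and the constant scales, leaving the condition that $a_3 XX^* + a_4 X^*X \succeq 0$ plus lower-order terms be PSD for all $s$; letting $s \to \infty$ isolates $\begin{bmatrix} a_3 & a_2 \\ a_2 & a_4\end{bmatrix} \succeq 0$ after also using a shift $X = \begin{bmatrix} 0 & s \\ 0 & 0 \end{bmatrix}$ against $X + cI$ to produce the off-diagonal $a_2$ entry. I expect this step — choosing just enough $2\times 2$ matrices to squeeze out exactly \eqref{gram:20} and no more — to be the main obstacle, since one must be careful that the two families of test matrices together are sufficient and that no spurious constraint is introduced.

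Finally (vi) $\Rightarrow$ (iii): given the scalar data, one exhibits a feasible $\lambda$ explicitly. When $a_0 > 0$, the Schur complement of the $(1,1)$ entry of $G$ in \eqref{eq:gram21} is $\begin{bmatrix} a_3 & a_2 \\ a_2 & a_4 \end{bmatrix} - \frac{1}{a_0}\begin{bmatrix} \lambda \\ a_1 - \lambda\end{bmatrix}\begin{bmatrix} \lambda & a_1-\lambda\end{bmatrix}$; choosing $\lambda = \tfrac{a_1 a_3}{a_3 + a_4}$ (or $\lambda$ splitting $a_1$ proportionally to the diagonal of \eqref{gram:20}) together with \eqref{gram:20} and the discriminant inequality makes this Schur complement PSD, so $G \succeq 0$. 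The degenerate cases $a_0 = 0$ (forcing $a_1 = 0$ by the discriminant condition, then $\lambda = 0$ works) and $a_3 + a_4 = 0$ (forcing \eqref{gram:20} to be $0$, hence $a_2 = 0$) are handled separately by inspection. This closes the loop; the "two squares" sharpening in (i) comes for free because the feasible $G$ produced this way has rank at most $2$ (its range lies in the span of two vectors once the Schur complement is rank $\leq 1$ or $\leq 2$ appropriately), or can simply be read off from the final PSD $3\times 3$ matrix having nontrivial kernel. I would present the diagonal/nilpotent test-matrix computations and the explicit $\lambda$ as short displays and leave the remaining coefficient-matching as routine, matching the paper's stated intent that the proof is "left as an exercise."
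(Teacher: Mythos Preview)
Your overall chain is sound and largely parallels the paper's proof in the appendix, but the two diverge at (vi) $\Rightarrow$ (iii). The paper does not produce an explicit $\lambda$; instead it performs the affine shift $x \mapsto x - a_1/(2a_2+a_3+a_4)$ to kill the linear term, after which the polynomial is a nonnegative constant plus a homogeneous quadratic whose Gram matrix is exactly \eqref{gram:20}, so (vi) immediately gives a sum of squares. Your Schur-complement route is a legitimate alternative, but the displayed choice $\lambda = a_1 a_3/(a_3+a_4)$ is wrong: it ignores $a_2$, and e.g.\ with $(a_0,a_1,a_2,a_3,a_4)=(1,\,2.9,\,1.9,\,4,\,1)$ (which satisfies (vi)) it yields a Schur complement with negative $(1,1)$ entry. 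The choice that works is $\lambda = a_1(a_2+a_3)/(2a_2+a_3+a_4)$, i.e.\ $v=(\lambda,a_1-\lambda)^T$ parallel to $A\binom{1}{1}$ for $A$ the matrix in \eqref{gram:20}; then $v^T A^{-1} v = a_1^2/(2a_2+a_3+a_4)\le a_0$ is precisely the discriminant hypothesis, and $A - a_0^{-1}vv^T\succeq 0$ follows. For (iii) $\Rightarrow$ (i) the paper is crisper than your kernel remark: it takes the \emph{smallest} $\lambda$ with $G(\lambda)\succeq 0$, so $G$ is singular and hence of rank $\le 2$. One small gain of your route is that your (iii) $\Rightarrow$ (iv) is direct, whereas the paper obtains (ii) $\Leftrightarrow$ (iv) by invoking Helton's theorem \cite{Hel}. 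For (v) $\Rightarrow$ (vi) the paper's choice of test matrices is close to yours: it reads off trace and determinant of the homogeneous part $\hat p$ at $\bigl(\begin{smallmatrix}0&1\\0&c\end{smallmatrix}\bigr)$ to get \eqref{gram:20}, and then plugs in the scalar $-a_1/(2a_2+a_3+a_4)$ for the discriminant inequality.
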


The quantitative strengthening of the sum of squares theorem 
tells us that a $p$ as in Lemma {\rm\ref{lem:2sos}}
is a sum of squares iff it is positive semidefinite for all $X\in\R^{3\times3}$ $($cf.~\cite{Hel,MP,McC}$)$. Item {\rm (v)}
above improves this size bound a bit.

\subsection{The $p$ which generate real ideals}

Let $p$ be an arbitrary univariate and quadratic polynomial, and let $I$ be the principal left ideal it generates. 
We would like to determine when $I$ is real. 
  
\begin{prop}  
Let 
\beq\label{eq:p2}
p=a_0+a_1x+a_2x^*+a_3x^2+a_4xx^*+a_5x^*x+a_6 (x^*)^2\in\FA
\eeq
be an arbitrary quadratic univariate polynomial $($i.e., at least one of $a_3,a_4,a_5,a_6$ is nonzero$)$.
Then the left principal ideal generated by $p$ is non-real if and only if either $(1)$ or $(2)$ holds.
\begin{enumerate}[\rm (1)]
\item Either $a_4+a_6$ or $a_3+a_5$ is nonzero and there exists an integer $t$ such that
 \bes
 (-1)^t \left[ \begin{array}{cc}  2a_4 & a_3+a_6 \\ a_3+a_6 & 2a_5  \end{array}\right] \succeq 0 ,
\; (-1)^t a_0 \ge 0 \;
 \text{ \rm and }\; -(a_1+a_2)^2+4a_0(a_3+a_4+a_5+a_6) \ge 0.
 \ees
\item  $a_3+a_5=0=a_4+a_6$ and either 
\bes
a_3 + a_4 = 0 \quad\text{\rm and}\quad a_1 + a_2 = 0\quad\text{\rm and}\quad 
\big(a_2 a_4 \ne 0\; \text{ \rm or }\; (a_0 a_4 \ge 0  \text{ \rm and } a_0^2+a_4^2 \ne 0)\big)
\ees
or
\bes
a_4 + a_3 \ne 0\quad\text{\rm and}\quad a_1 + a_2 \ne 0\quad\text{\rm and}\quad a_0 =\frac{ (a_1 + a_2) (a_1 a_4-a_2 a_3)}{(a_3 + a_4)^2}.
\ees
\end{enumerate}
\end{prop}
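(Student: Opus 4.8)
The plan is to run everything through Proposition~\ref{prop:degBoundsPI}: the left ideal $I$ generated by $p$ is real if and only if there is \emph{no} nonzero sum of squares of the form $qp+p^*q^*$ with $\deg(q)<\deg(p)=2$, so throughout I only work with $q=q_0+q_1x+q_2x^*$, $q_i\in\R$. The first step is to use that a sum of squares has even degree: writing the degree-two part of $p$ as $h=a_3x^2+a_4xx^*+a_5x^*x+a_6(x^*)^2$, the degree-three part of $qp+p^*q^*$ is $(q_1x+q_2x^*)h+h^*(q_1x^*+q_2x)$, and expanding in the eight cubic monomials (using that the whole expression is symmetric) shows this cubic part vanishes precisely when
\[
q_1a_3+q_2a_6=0,\qquad q_1(a_4+a_6)=0,\qquad q_1a_5+q_2a_4=0,\qquad q_2(a_3+a_5)=0.
\]
Since a nonzero sum of squares has even degree, we may assume these four equations hold; they cleanly separate the two cases of the statement.

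In Case (1), i.e.\ $a_4+a_6\neq0$ or $a_3+a_5\neq0$, the four equations force $q_1=q_2=0$ (if $a_4+a_6\neq0$ then $q_1=0$, and then $q_2a_4=q_2a_6=0$ with $(a_4,a_6)\neq(0,0)$ gives $q_2=0$; symmetrically if $a_3+a_5\neq0$). Hence $q$ is a constant and $qp+p^*q^*=q_0(p+p^*)$, so $I$ is non-real iff $p+p^*$ or $-(p+p^*)$ is a nonzero sum of squares. As
\[
p+p^*=2a_0+(a_1+a_2)(x+x^*)+(a_3+a_6)\big(x^2+(x^*)^2\big)+2a_4xx^*+2a_5x^*x
\]
is a symmetric quadratic, I apply Lemma~\ref{lem:2sos}(vi) to $\pm(p+p^*)$: the $2\times2$ block is $\pm\left[\begin{smallmatrix}2a_4&a_3+a_6\\a_3+a_6&2a_5\end{smallmatrix}\right]$, the sign records as $(-1)^t$, the inequality ``$b_0\ge0$'' becomes $(-1)^ta_0\ge0$, and the remaining inequality ``$-b_1^2+b_0(2b_2+b_3+b_4)\ge0$'' is sign-invariant and reads $-(a_1+a_2)^2+4a_0(a_3+a_4+a_5+a_6)\ge0$. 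This is exactly condition~(1), modulo being careful in the degenerate configuration (when that $2\times2$ block is the zero matrix) that the candidate $\pm(p+p^*)$ is not identically zero.

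In Case (2), i.e.\ $a_3+a_5=0=a_4+a_6$, one checks $h=(x-x^*)g$ with $g=a_3x+a_4x^*$, and that $(a_3,a_4)\neq(0,0)$ since $p$ is quadratic; now the four equations collapse to the single relation $q_1a_3=q_2a_4$, i.e.\ $q_1x+q_2x^*$ is a scalar multiple of $g^*=a_4x+a_3x^*$. So the admissible $q$ are exactly $q=q_0+\lambda g^*$, and $qp+p^*q^*=q_0(p+p^*)+\lambda(g^*p+p^*g)$. Writing $p=a_0+(a_1x+a_2x^*)+(x-x^*)g$ and using that $g^*(x-x^*)g$ is antisymmetric (so the cubic parts cancel), I get
\[
g^*p+p^*g=a_0(a_3+a_4)(x+x^*)+(a_1a_4+a_2a_3)\big(x^2+(x^*)^2\big)+2a_1a_3\,x^*x+2a_2a_4\,xx^*,
\]
so $qp+p^*q^*$ ranges over a two-parameter family of symmetric quadratics, parametrized by $(q_0,\lambda)$, and $I$ is non-real iff some member is a nonzero sum of squares. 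Feeding its Lemma~\ref{lem:2sos}-normalized coefficients $b_0,\dots,b_4$ into Lemma~\ref{lem:2sos}(vi), two computations drive the analysis: the identity $2b_2+b_3+b_4=2\lambda(a_1+a_2)(a_3+a_4)$, and the resulting inequality ``$-b_1^2+b_0(2b_2+b_3+b_4)\ge0$'' simplifying to $-(A-B)^2\ge0$ with $A=q_0(a_1+a_2)$, $B=\lambda a_0(a_3+a_4)$; hence it forces the affine relation $q_0(a_1+a_2)=\lambda a_0(a_3+a_4)$, cutting the family down to a line.

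The endgame, and the part I expect to be the main obstacle, is to solve the residual $2\times2$ semidefiniteness problem on that line, splitting on $a_3+a_4=0$ versus $a_3+a_4\neq0$ and on $a_1+a_2=0$ versus $a_1+a_2\neq0$. If $a_3+a_4=0$ (so $g$ is a scalar multiple of the antisymmetric $x-x^*$), the identity above forces $2b_2+b_3+b_4=0$ and hence $b_1=0$, so $q_0(a_1+a_2)=0$; if moreover $a_1+a_2=0$, the $2\times2$ positivity (for a suitable sign of $\lambda$) together with nonvanishing yields exactly the first alternative of~(2) (with the subclause ``$a_2a_4\neq0$ or ($a_0a_4\ge0$ and $a_0^2+a_4^2\neq0$)'' coming from the $a_1\neq0$ versus $a_1=0$ split), while if $a_1+a_2\neq0$ the relevant block has negative determinant and $I$ is real. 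If $a_3+a_4\neq0$, the affine relation eliminates $q_0$; when $a_1+a_2=0$ the block again has determinant a negative multiple of a square and $I$ is real, and when $a_1+a_2\neq0$ the block is a fixed matrix up to scaling whose determinant, after clearing denominators, is nonnegative (hence the matrix is $\pm$ semidefinite) exactly when $a_0=\frac{(a_1+a_2)(a_1a_4-a_2a_3)}{(a_3+a_4)^2}$, giving the second alternative. The delicate points throughout are the repeated degeneracy case-splits (antisymmetric $g$, vanishing of $a_1+a_2$, of $a_0$, of the candidate Gram block) and consistently discarding the $(q_0,\lambda)$ that make $qp+p^*q^*$ identically zero; the algebra is routine but bulky, and a check on $p=xx^*-(x^*)^2+2x+4$, where indeed $a_0=4=\frac{(a_1+a_2)(a_1a_4-a_2a_3)}{(a_3+a_4)^2}$, confirms the non-real verdict.
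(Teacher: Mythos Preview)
Your proof is correct and follows essentially the same route as the paper: reduce via Proposition~\ref{prop:degBoundsPI} to linear $q$, kill the cubic terms by the same four-equation system, split into the same two cases, and in each apply Lemma~\ref{lem:2sos}(vi). Your observation that $h=(x-x^*)g$ with $g=a_3x+a_4x^*$ and your explicit identification of $-b_1^2+b_0(2b_2+b_3+b_4)$ as a negative square are tidy computational shortcuts, but the paper arrives at the same two forced linear relations (it packages them as the third and fourth lines of its system~\eqref{eq:bla3}) and then performs the identical case split on $a_3+a_4$ and $a_1+a_2$.
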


\begin{proof}
The proof illustrates the theory in Section \ref{sec:hom} and requires calculations stemming from that.
These we found tricky, so we include them.
 By Proposition \ref{prop:degBoundsPI}, $I$ is non-real if and only if there is a linear 
 \beq\label{eq:p1}
 q=q_0+q_1x+q_2x^*\in\FA
 \eeq such that
$qp+p^*q^*$ is a nonzero sum of squares.
Compute $qp+p^*q^*$ and verify that it equals
\begin{multline}\label{eq:bla}
2 a_0  q_0 
+ x \big(a_0  q_1  + a_0  q_2 + a_1 q_0  + a_2  q_0  \big)
 + x^* \big(a_0  q_1   + a_0 q_2    + a_1  q_0    +a_2 q_0  \big) \\ 
+
x^2 \big( a_1  q_1 +a_2  q_2  +a_3  q_0 +a_6  q_0  \big)+
(x^*)^2 \big(a_1  q_1 +a_2  q_2 +a_3 q_0    +a_6 q_0    \big) \\
+2 x^*x \big( a_1 q_2   +  a_5  q_0 \big) +
+2 xx^* \big( a_2  q_1 + a_4  q_0 \big)   \\ 
+ x^3 \big(a_3 q_1  +a_6  q_2  \big) +
(x^*)^3 \big(a_3  q_1 +a_6  q_2 \big)
+ x^2 x^* \big( a_6  q_1  +a_4  q_1 \big)
+ x (x^*)^2 \big( a_6  q_1   +a_4  q_1  \big)\\
+ xx^*x \big( a_4  q_2   +a_5  q_1  \big) 
+ x^* x^2 \big(  a_3  q_2  +   a_5  q_2   \big) 
+ (x^*)^2 x \big(  a_3  q_2 +a_5 q_2  \big) 
+ x^*xx^* \big( a_4 q_2 +a_5 q_1 \big).
\end{multline}
Here line 1 of \eqref{eq:bla} collects degree 0 and 1 terms of $qp+p^*q^*$,
lines 2 and 3 contain degree 2 terms, while lines 4 and 5 contain degree 3 terms.

If \eqref{eq:bla} is a sum of squares, all degree 3 terms must vanish
(and we are thrown into the case of Lemma \ref{lem:2sos}). 
All  degree 3 terms vanishing is equivalent to the
following system of equations:
\beq\label{eq:sys}
\begin{array}{rclcrcl}
a_3 q_1  +a_6  q_2 &=&0 & \qquad\qquad & (a_4+a_6)q_1 & = & 0 \\
a_4 q_2+a_5q_1&=&0 & \qquad\qquad & (a_3+a_5)q_2&=& 0.
\end{array}
\eeq

There are two cases to consider.

\ben[\rm(a)]
\item\label{cas:a}
If either $a_4+a_6\neq0$ or $a_3+a_5\neq0$, then the system \eqref{eq:sys} has only
the trivial solution $q_1=q_2=0$.  In this case $I$ is non-real iff
\[
p+p^* = 2a_0 + (a_1+a_2)x+ (a_1+a_2)x^*  + (a_3+a_6) x^2+ 
2a_4 xx^* + 2a_5x^*x +
(a_3+a_6) (x^*)^2
\]
 is a $\pm$ sum of squares. By Lemma \ref{lem:2sos}(vi), this is true if either
 \bes
 \left[ \begin{array}{cc}  2a_4 & a_3+a_6 \\ a_3+a_6 & 2a_5  \end{array}\right] \succeq 0 \text{ and }
 -(a_1+a_2)^2+4a_0(a_3+a_4+a_5+a_6) \ge 0 \text{ and } a_0 \ge 0
 \ees
 or
\bes
 \left[ \begin{array}{cc}  2a_4 & a_3+a_6 \\ a_3+a_6 & 2a_5  \end{array}\right] \preceq 0 \text{ and }
 -(a_1+a_2)^2+4a_0(a_3+a_4+a_5+a_6) \ge 0 \text{ and } a_0 \le 0. 
 \ees

\item
If $a_3+a_5=0=a_4+a_6$ then $a_5=-a_3$ and $a_6=-a_4$, and the system \eqref{eq:sys}
reduces to the single equation 
\[
-a_3 q_1+a_4 q_2=0.
\]
If $a_3=a_4=0$, then $p$ is linear and then Corollary \ref{cor:linear} tells us when $I$ is real.
If $a_3 \ne 0$ or $a_4 \ne 0$ then there exists $t \in \RR$ such that $q_1=t a_4$ and $q_2=t a_3$.
We reevaluate $qp+p^*q^*$:
\begin{multline}\label{eq:bla2}
2 a_0 q_0 + 
x \Big( a_1 q_0  + a_2 q_0  + a_0 a_4 t + a_0 a_3 t \Big)  
+x^* \Big( a_1 q_0  + a_2 q_0  + a_0 a_4 t + a_0 a_3 t \Big)  \\
  +
  x^2 \Big(    a_3 q_0   - a_4 q_0   + a_1 a_4 t  + a_2 a_3 t \Big)
  + (x^*)^2 \Big(    a_3 q_0   - a_4 q_0   + a_1 a_4 t   + a_2 a_3 t \Big) \\
  + 2 xx^* \big( a_4 q_0  +  a_2 a_4 t \big) + 2 x^* x \Big( 
 -  a_3 q_0  + a_1 a_3 t \Big)
\end{multline}
Then $I$ is non-real if and only if there are $q_0,t\in\R$ making \eqref{eq:bla2} a nonzero sum of squares.
By assertion (vi) of Lemma \ref{lem:2sos},  \eqref{eq:bla2} is a sum of squares 
if and only if the system of the following polynomial inequalities is feasible:
\begin{equation}
\label{eq:bla3}
\begin{split}
 a_4 (a_2 t + q_0) & \geq 0 \\
 a_3 (a_1 t - q_0) & \ge 0 \\
 (a_3+a_4) q_0 & =(a_1 a_4-a_2 a_3) t \\
 (a_1+a_2) q_0 & = a_0 (a_3+a_4) t \\
 a_0 q_0 & \geq 0 \\
\end{split}
\end{equation}
We also require that \eqref{eq:bla2} has at least one nonzero coefficient. 

If the linear system (for $q_0,t$) of the third and the fourth equation in
\eqref{eq:bla3}
has a nonzero determinant, then $q_0=t=0$ which implies a contradiction that all coefficients of \eqref{eq:bla2} are zero. Therefore
\begin{equation}
\label{eq:bla4}
(a_1 + a_2) (a_1 a_4-a_2 a_3)=a_0(a_3 + a_4)^2.
\end{equation}

If $a_3+a_4=0$, then by \eqref{eq:bla4} also $a_1+a_2=0$ and \eqref{eq:bla3} simplifies considerably. 
The inequality in the second line of \eqref{eq:bla3} is now equivalent to the first, 
so we are left with the system of the first and the fifth inequality.
Moreover, \eqref{eq:bla2} has a nonzero coefficient iff   $a_4 (a_2 t + q_0) \ne 0$ or $a_0 q_0 \ne 0$. 
This system has a solution iff either $a_2 a_4 \ne 0$ or $a_0 a_4 \ge 0$ and $a_0^2+a_4^2 \ne 0$.

If  $a_3+a_4 \ne 0$ we can compute $a_0$ from \eqref{eq:bla4} and $q_0$ from the third equation of  \eqref{eq:bla3}.
In this case  the system \eqref{eq:bla3} is equivalent to $\displaystyle\frac{(a_1 + a_2) t}{a_3 + a_4} \ge 0$.
Moreover, \eqref{eq:bla2} has a nonzero coefficient iff $(a_1+a_2) t \ne 0$. This system has a solution iff $a_1+a_2 \ne 0$.

Therefore, in case (b), the ideal $I$ is non-real iff either 
\bes
a_3 + a_4 = 0 \quad\text{and}\quad a_1 + a_2 = 0\quad\text{and}\quad 
\big(a_2 a_4 \ne 0\; \text{ or }\; (a_0 a_4 \ge 0  \text{ and } a_0^2+a_4^2 \ne 0)\big)
\ees
or 
\bes
a_4 + a_3 \ne 0\quad\text{and}\quad a_1 + a_2 \ne 0\quad\text{and}\quad a_0 =\frac{ (a_1 + a_2) (a_1 a_4-a_2 a_3)}{(a_3 + a_4)^2}.
\qedhere
\ees
\een
\end{proof}

\newpage


\newpage

\appendix

\section{NOT FOR PUBLICATION}

In this section we collect some supplementary material that was not included for publication.

\subsection{Proof of Lemma {\rm \ref{lem:2sos}}}
It is clear that (i) implies (ii). 
By noting that $G$  in \eqref{eq:gram21} is the general form for the Gram matrix of $p$,
we can deduce that items (ii) and (iii) are equivalent.
Now suppose (iii) holds. By choosing the smallest $\la$ making $G=G(\la)\succeq0$, the rank of the corresponding $G$ is $\leq2$. As in the proof of Lemma \ref{lem:homsos} this implies $p$ is a sum of $\leq2$ squares,
so (i) holds. By the sum of squares theorem \cite{Hel}, (ii) and (iv) are equivalent. 
All this shows that (i), (ii), (iii) and (iv) are equivalent.

Clearly, (iv) implies (v). Let us now establish (v) $\Rightarrow$ (vi). First of all, $a_0=p(0)\geq0$.
Since $p$ is positive semidefinite on $\R^{2\times 2}$, its homogeneous degree $2$ part, 
$\hat p$, is also positive semidefinite on $2\times 2$ matrices. It follows that
\beq\label{eq:1}
\tr \left( \hat p \Big (
\begin{bmatrix}
0 & 1 \\ 0 & 0 
\end{bmatrix}
\Big)\right) = a_3+a_4\geq0,
\eeq
and
\beq\label{eq:2}
\det \left( \hat p \Big (
\begin{bmatrix}
0 & 1 \\ 0 & c 
\end{bmatrix}
\Big)\right) = 
a_3a_4 +c^2 (a_3a_4-a_2^2) \geq 0 \quad \text{for all } c\in\R.
\eeq From 
\eqref{eq:2} we immediately obtain
\beq\label{eq:3}
a_3a_4-a_2^2\geq0.
\eeq
Together \eqref{eq:1}  and \eqref{eq:3} show that the matrix \eqref{gram:20} has nonnegative trace and nonnegative determinant, so is positive semidefinite.
In particular, 
\[
2a_2+a_3+a_4= 
\left\langle 
\begin{bmatrix}
 a_3 & a_2 \\
 a_2& a_4
\end{bmatrix}
,
\begin{bmatrix}
 1&1\\
 1&1
\end{bmatrix}
\right\rangle\geq0.
\]
If $2a_2+a_3+a_4=0$, then $p(c)=a_0+2c a_1\geq0 $ for all $c\in\R$, whence $a_1=0$.
In this case $-a_1^2 + a_0 (2 a_2+a_3+a_4)=0$.
If $2a_2+a_3+a_4>0$, then
\[
p\left(-\frac{a_1}{2a_2+a_3+a_4}\right) =
\frac{-a_1^2 + a_0 (2 a_2+a_3+a_4)}{2a_2+a_3+a_4} 
\geq0\]
shows $-a_1^2 + a_0 (2 a_2+a_3+a_4)\geq0$, as desired.

Finally, assume (vi) holds. If $2 a_2+a_3+a_4=0$, then 
$a_1=0$, and hence $p$ has no linear terms. Since \eqref{gram:20} is
positive semidefinite, the homogeneous quadratic part of $p$ is a sum of squares. As also $a_0\geq0$,
$p$ is a sum of squares. 
We can thus assume 
$
2a_2+a_3+a_4>0.
$
Perfoming an affine linear change of variables $x\mapsto x-
\frac{a_1}{2a_2+a_3+a_4}$ leads to the polynomial
\[
\tilde p(x) = p\left(x
-\frac{a_1}{2a_2+a_3+a_4}\right) = 
\frac{-a_1^2 + a_0 (2 a_2+a_3+a_4)}{2a_2+a_3+a_4} + 
a_2(x^2+(x^*)^2) + a_3xx^*+a_4x^*x
\]
having no linear term. 
Obviously, $ p$ is positive (resp., sum of squares) iff $\tilde p$ is positive (resp., sum of squares),
and the latter is the case iff 
its constant term is nonnegative and its homogeneous quadratic part is a sum of squares, i.e., iff
$-a_1^2 + a_0 (2 a_2+a_3+a_4)\geq0$ and \eqref{gram:20} is positive semidefinite.
\qed

\subsection{Left Gr\"obner Basis Algorithm on $\FA$}
\label{sect:leftGB}

A useful tool for the general algorithm for testing reality of non-principal left ideals 
 is computation of a left Gr\"obner basis
for an ideal. This was implicit in \cite{CHMN} but here we give
a cleaner description which lends itself to our computation.
For details we refer the reader to the extensive literature on noncommutative
Gr\"obner bases; see e.g.~\cite{Mor94,Rei95,Kel97,Hey01,Lev05} and the references therein.

Fix a monomial order $\succ$ on $\axs$.
For convenience, we choose an order such that
$\deg(u) < \deg(v)$ implies $u \prec v$.
By definition, $\succ$ satisfies the descending chain condition.

Let $p_1, \ldots, p_k$ generate a left ideal $I \subset \FA$,
and assume that they are monic (i.e. the coefficient of the leading monomial
is $1$).
Let $u_1, \ldots, u_k$ be the leading monomials of $p_1, \ldots, p_k$
respectively.
If $u_i \mid u_j$ for any $i \neq j$, let $\omega$ be a monomial such that
 $u_i = \omega u_j$.
Replace $p_j$ by $ A(p_j - \omega p_i)$, where $A$ is a normalization
making the latter polynomial monic. 
In this case, the leading monomial of this new
$p_j$ is lower than the leading monomial of the old $p_j$.
If any of the new $p_j$ are $0$, remove them from our set.

Repeat this until $u_i \nmid u_j$ for any $i \neq j$.
Then $\{p_1, \ldots, p_k\}$ is a left Gr\"obner basis.  
This Algorithm is guaranteed to 
terminate since at each step where the algorithm does not stop,
we replace a polynomial with another polynomial whose leading monomial is lower
than that of the polynomial being replaced.

\begin{prop}
 Let $p_1, \ldots, p_k$ be a left Gr\"{o}bner basis for a
left ideal $I$, and suppose $\deg(p_i) \leq d$ for each $i$.
Then for each $e \geq d$, a basis for $I_e$ is
\begin{equation}
\label{eq:leftGBBasis}
 \{ vp_i\mid 1 \leq i \leq k,\ v \in \axs_{e-\deg(p_i)}\}.
\end{equation}
\end{prop}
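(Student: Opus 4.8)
The plan is to establish separately that the proposed set spans $I_e$ and that it is linearly independent, $I_e$ denoting the polynomials in $I$ of degree at most $e$. Write $u_i := \mathrm{lm}(p_i)$ for the leading monomial of $p_i$ (recall the $p_i$ are monic). I will use two standard properties of the monomial order $\succ$: since $\succ$ refines degree, $\deg(\mathrm{lm}(f)) = \deg(f)$ for all $f$ (in particular $\deg(u_i) = \deg(p_i)$), and the order is multiplicative, $\mathrm{lm}(ab) = \mathrm{lm}(a)\,\mathrm{lm}(b)$, so every monomial appearing in $a\,p_i$ is $\preceq a\,u_i$. I will also use the defining property of a left Gr\"obner basis, namely that every nonzero $f \in I$ satisfies $\mathrm{lm}(f) = w\,u_i$ for some word $w$ and index $i$, together with the minimality produced by the reduction step of the algorithm above: no $u_i$ is a right divisor of $u_j$ when $i \ne j$.

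Containment of the proposed set in $I_e$ is immediate: for $v \in \axs_{e-\deg(p_i)}$ we have $v p_i \in I$ and $\deg(v p_i) = \deg(v) + \deg(p_i) \le e$. For spanning I would argue by Noetherian induction on $\mathrm{lm}(f)$, which is legitimate since $\succ$ is a well-order. Given $0 \ne f \in I_e$ with leading coefficient $c$, the Gr\"obner property gives $\mathrm{lm}(f) = w\,u_i$ for some word $w$ and some $i$; then $\deg(w) = \deg(f) - \deg(p_i) \le e - \deg(p_i)$, so $w \in \axs_{e-\deg(p_i)}$ and $w p_i$ belongs to the proposed set. The polynomial $g := f - c\,w\,p_i$ lies in $I$, still has degree $\le e$, and satisfies $\mathrm{lm}(g) \prec \mathrm{lm}(f)$ or $g = 0$; by the inductive hypothesis $g$, and hence $f$, is a linear combination of the proposed set.

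The hard part will be linear independence. The crucial observation is that the assignment $(v,i) \mapsto v\,u_i$ is injective on the index set $\{(v,i) : 1 \le i \le k,\ v \in \axs_{e-\deg(p_i)}\}$: if $v\,u_i = v'\,u_j =: m$, then $u_i$ and $u_j$ are both suffixes of the word $m$, so the shorter one is a suffix, hence a right divisor, of the longer one; for $i \ne j$ this contradicts minimality of the Gr\"obner basis, and for $i = j$ it forces $u_i = u_j$ and therefore $v = v'$. Granting this, suppose $\sum_i \sum_v c_{v,i}\,v p_i = 0$ with not all $c_{v,i}$ zero, and choose $(v_0,i_0)$ with $c_{v_0,i_0} \ne 0$ maximizing $v_0 u_{i_0}$ among $\{v u_i : c_{v,i} \ne 0\}$. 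Every monomial of any other nonzero term $c_{v,i} v p_i$ is $\preceq v u_i \prec v_0 u_{i_0}$, so the coefficient of the word $v_0 u_{i_0}$ in the whole sum equals $c_{v_0,i_0}$ (using that $p_{i_0}$ is monic), contradicting that the sum is $0$.

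Thus the proposed set spans $I_e$ and is linearly independent, so it is a basis. I expect no serious difficulty in the spanning step (it is the standard Gr\"obner division, adapted to a left ideal and with the degree bookkeeping handled by the fact that $\succ$ refines degree); the only genuine obstacle is the injectivity of $(v,i)\mapsto v u_i$, which is precisely where the minimality of the Gr\"obner basis — as opposed to the mere Gr\"obner property — enters.
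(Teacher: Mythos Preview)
Your proof is correct and rests on the same key observation as the paper's: the map $(v,i)\mapsto v\,u_i$ is injective because no $u_i$ right-divides $u_j$ for $i\ne j$, which immediately gives linear independence. For spanning the paper argues slightly more directly---showing that in \emph{any} representation $\iota=\sum q_ip_i$ the leading monomials of the nonzero summands $q_ip_i$ are pairwise distinct (by the same injectivity), so $\deg(\iota)=\max_i\deg(q_ip_i)$ and the degree bound on the $q_i$ is automatic---whereas you invoke the Gr\"obner leading-monomial property and run the usual division by Noetherian induction; the two are equivalent and equally valid.
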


\begin{proof}
 Any polynomial $\iota \in I$ is equal to
\[ \iota = q_1p_1 + \ldots + q_kp_k\]
for some polynomials $q_i$ since the $p_i$ generate $I$.
Let $\omega_i$ be the leading monomial of each $q_i$,
and let $u_i$ be the leading monomial of each $p_i$ so that the leading
monomial of $q_ip_i$ is $\omega_iu_i$.

If $i \neq j$, then $\omega_iu_i \neq \omega_ju_j$; indeed,
suppose $\omega_iu_i = \omega_ju_j$, and suppose $\deg(u_i) \leq \deg(u_j)$.
Then $\omega_iu_i = \omega_ju_j$ implies that $u_i \mid u_j$, which is a
contradiction by the construction of the left Gr\"obner basis.

Therefore none of the leading monomials of the $q_ip_i$ cancel each other out.
In particular, $\deg(\iota) = \max\{ \deg(q_ip_i) \}$.
This shows that the set \eqref{eq:leftGBBasis} spans $I_e$ for each $e \geq d$.
Likewise, since the leading monomials of $v_ip_i$ cannot cancel each other out,
the set \eqref{eq:leftGBBasis}  is linearly independent.
\end{proof}

\subsection{The Real Ideal Algorithm for finitely generated left ideals}
Suppose $I\subseteq\FA$ is a finitely generated left ideal
with left Gr\"obner basis
$\{p_1,\ldots,p_r\}\subseteq\FA_d$.
Consider
the feasibility problem
\beq\label{eq:lmi2}
\begin{split}
s&\in\FA  \quad\text{is a nonzero sum of squares}\quad \\
\text{s.t. }
s & = \sum_{j=1}^r \big(q_jp_j+p_j^*q_j^*\big), \quad
\deg(q_j p_j)<2 d.
\end{split}
\eeq
As in Subsection \ref{sec:poor},
this is an instance of an LMI,
\beq\label{eq:sdp2}
\begin{split}
\text{Find } 0\neq G &\succeq0\\
\text{s.t. } M_{<d}^*GM_{<d} &= \sum_{j=1}^r \big(q_jp_j+p_j^*q_j^*\big), \quad
\deg(q_j p_j)<2 d.
\end{split}
\eeq
and 
can thus be solved using a standard
SDP solver. To search for a nonzero $G$ we normalize \eqref{eq:sdp2} by requiring
$\tr(G)=1$.
The ideal $I$ is real if and only if \eqref{eq:sdp2} is infeasible.

\begin{example}
Here is a simple univariate example.
Let $\succ$ be the following monomial order on $\axs$: 
for $u,w
\in \axs$, we define  $u \prec w$ if $\deg(u) < \deg(w)$ or if $\deg(u) = \deg(w)$ and $u = r
x^* s$ and $w = r x t$ for some $r,s,t \in \axs$.

Let the following set generate a left ideal
\[ S=\{ x^3 + 1, x^2 + (x^*)^2,  xx^* - (x^*)^2, x^*x - 5\}.\]
Observe that these polynomials are presented in an $\succ$-decreasing manner.
We see $x^2$ divides $x^3$, so we replace $x^3 + 1$ in $S$ with
\[-\big( x^3 + 1 - x(x^2 + (x^*)^2)\big) = x(x^*)^2 - 1.\]
We thus obtain the generating set
\begin{equation}
 \label{eq:finalLeftGB}
S'=\{ x(x^*)^2 - 1, x^2 + (x^*)^2, xx^* - (x^*)^2, x^*x -
5\}.
\end{equation}
Clearly, $S'$ is a left Gr\"obner basis. We used a Mathematica implementation of
\eqref{eq:sdp2} based on \href{http://math.ucsd.edu/~ncalg}{\tt NCAlgebra} \cite{HOSM} to verify that the left ideal generated by $S'$ is  real.
\end{example}

To construct \eqref{eq:lmi2} or \eqref{eq:sdp2}, 
any generators $p_j$ for $I$ will do, the fewer the better. Finding a smallest generating set
for an ideal is hard, hence a reduced Gr\"obner basis is a reasonable choice.
The left Gr\"ober basis algorithm is useful in producing a fairly small basis for the ideal $I$. 

We have implemented the algorithm for test purposes. A major limitation is construction of the LMI
required. This arises from  manipulation of the large number of monomials in $M_{<d}^*GM_{<d}$.
One could improve performance  by storing these in advance.
Also, the benefits when the $p_j$ have few terms  could be explored further.

\end{document}